\newcommand\version{February 17, 2022}
\newtheorem{theorem}{Theorem}[section]
\newtheorem{proposition}[theorem]{Proposition}
\newtheorem{lemma}[theorem]{Lemma}
\newtheorem*{theoremA}{Theorem A}
\newtheorem*{theoremB}{Theorem B}
\theoremstyle{definition}
\newtheorem{conjecture}[theorem]{Conjecture}
\theoremstyle{remark}
\newtheorem{remark}[theorem]{Remark}
\numberwithin{equation}{section}
\newcommand{\C}{\mathbb{C}}
\renewcommand{\epsilon}{\varepsilon}
\newcommand{\N}{\mathbb{N}}
\newcommand{\R}{\mathbb{R}}
\newcommand*\diff{\mathop{}\!\mathrm{d}} % d in dx im Integral
\newcommand{\eps}{\varepsilon}
\newcommand{\Sph}{{\mathbb{S}^n}}
\newcommand{\sph}{{\mathbb{S}^{n-1}}}
\begin{document}

\title[Constant $Q$-curvature metrics with a singularity]{Constant $Q$-curvature metrics with a singularity}

\author{Tobias K\"onig}
\address[Tobias K\"onig]{Institut de Mathématiques de Jussieu, Université de Paris, Bâtiment Sophie Germain, Boite Courrier 7012,	8 Place Aurélie Nemours, 75205 Paris Cedex 13, France}
\email{koenig@imj-prg.fr}

\author{Paul Laurain}
\address[Paul Laurain]{
Institut de Mathématiques de Jussieu, Université de Paris, Bâtiment Sophie Germain, Case 7052, 75205
Paris Cedex 13, France \& DMA, Ecole normale supérieure, CNRS, PSL Research University, 75005 Paris.}
\email{paul.laurain@imj-prg.fr}

\thanks{\copyright\, 2021 by the authors. This paper may be reproduced, in its entirety, for non-commercial purposes.\\
\emph{Date}: \version \\
Partial support through ANR BLADE-JC ANR-18-CE40-002 is acknowledged. The authors thank Luca Martinazzi for several helpful comments on a first version of this article.}

\begin{abstract}
For dimensions $n \geq 3$, we classify singular solutions to the generalized Liouville equation $(-\Delta)^{n/2} u = e^{nu}$ on $\mathbb R^n \setminus \{0\}$ with the finite integral condition $\int_{\R^n} e^{nu} < \infty$ in terms of their behavior at $0$ and $\infty$. These solutions correspond to metrics of constant $Q$-curvature which are singular in the origin. Conversely, we give an optimal existence result for radial solutions.  This extends some recent results on solutions with singularities of logarithmic type to allow for singularities of arbitrary order. As a key tool to the existence result, we derive a new weighted Moser--Trudinger inequality for radial functions.
\end{abstract}

\maketitle

\section{Introduction}

Let $n \geq 2$. Our goal is to understand the structure of the set of solutions to the equation 
\begin{equation}
\label{liouville singular}(-\Delta)^{n/2} u = e^{nu} \qquad \text{ on } \R^n \setminus \{0\}, \qquad \Lambda := \int_{\R^n} e^{nu} \diff y < \infty,
\end{equation}
which may present a singularity at the origin.

For a function $u \in L^1_\text{loc}(\R^n)$, the expression $(-\Delta)^{n/2} u$ is to be understood as the tempered distribution satisfying 
\begin{equation}
\label{distributional definition}
\langle (-\Delta)^{n/2} u, \varphi \rangle = \int_{\R^n} u(x) (-\Delta)^{n/2} \varphi(x) \diff x 
\end{equation} 
for every $\varphi \in C^\infty_c(\R^n)$. If $n$ is odd, the hypothesis $\int_{\R^n} \frac{|u(x)|}{1 + |x|^{2n}} \diff x < \infty$ needs to be added, since $(-\Delta)^{n/2} \varphi$ is not compactly supported in these cases. We refer to \cite{HyMaMa} for more details and basic regularity results. In fact, every solution to \eqref{liouville singular} belongs to $C^\infty(\R^n \setminus \{0\})$, by the proof of \cite[Theorem 2.1]{HyMaMa}. 

For $n =2$, equation \eqref{liouville singular} is the classical Liouville equation, whose solutions $u$ correspond to metrics of constant Gauss curvature on $\R^2 \setminus \{0\}$. In higher dimensions, equation \eqref{liouville singular} plays a similar role in connection with the notion of $Q$-curvature. Indeed if one considers a smooth compact Riemannian surface $(\Sigma,g)$ with Gauss curvature $K_g$, then by a conformal change of metric $g_u=e^{2u} g$, the curvature changes as follows
$$-\Delta_g u + K_g= K_{g_u} e^{2u}.$$
Here, $\Delta_g= \frac{1}{\sqrt{\vert g\vert }}\partial_i (\sqrt{\vert g\vert }g^{ij} \partial_j)$ is the Beltrami Laplacian, which is a conformally invariant operator in dimension $2$ in the sense that $\Delta_{g_u}=e^{-2u}\Delta_g$. Relying on this conformal invariance property, Paneitz \cite{Pa} (and then Graham, Jenne, Mason and Sparling \cite{GJMS}) proved that, on a given Riemannian manifold $(M^n,g)$, with $n\geq 2$, there exists a unique differential\footnote{Pseudodifferential when the dimension is odd.} operator $P_n$ of order $n$ such that
$$ P^n_g = (-\Delta_g)^\frac{n}{2} +\text{lower order operator},$$
and
$$P^n_{g_u}=e^{-nu}P^n_g .$$
For $\xi$ the Euclidean metric on $\R^n$, we can check that $P_\xi^n=(-\Delta)^{n/2}$ on $(\R^n,\xi)$.
Hence, the notion of Gauss curvature is generalized by the one of $Q$-curvature, in the sense that we have the following identity
$$P^n_g(u)+ Q_g=e^{nu}Q_{g_u},$$
where $Q_g$, the $Q$-curvature, depends only on the curvature and its derivatives, for instance for $n=4$, we have
$$Q_g=-\frac{1}{6}\left( \Delta_g R_g -R_g^2 + 3 \vert \mathrm{Ric}_g\vert^2 \right),$$
where $R_g$ and $\mathrm{Ric}_g$ are the scalar and the Ricci curvature. On the sphere $\Sph$ the Paneitz operator has a simple expression in terms of the Laplacian and its eigenvalues, see Section \ref{section sphere}. For instance for $n=4$, we have, for $g_c$ the standard metric of $\mathbb S^4$,
$$P_{g_c}^4 =-\Delta_{g_c}(-\Delta_{g_c}+2),$$
see \cite{Chang} for more geometrical details.   

Hence, solutions to \eqref{liouville singular} correspond to metrics which are conformal to the Euclidean metric on $\R^n \setminus \{0\}$ and have constant $Q$-curvature equal to one. When the behaviour of $u$ near $0$ is $\displaystyle u\sim \gamma \ln(\vert x\vert)$, the metric $e^{nu} \xi$ can be interpreted as a conical metric, see \cite{FaMa}, but if the $u$ blow faster at the origin the nature of singularity is closer to that of an essential singularity. This is to our knowlege an inexplored field from the geometric point of view.  

A fundamental property of equation \eqref{liouville singular}, which we shall use several times in this paper, is its conformal invariance. In particular, if $u$ solves \eqref{liouville singular}, then its inversion
\begin{equation}
\label{u bar}
\bar{u}(x)= u\left(\frac{x}{|x|^2}\right)  -2 \ln (|x|).  
\end{equation}
is also a solution to \eqref{liouville singular}, of same mass $\Lambda$ as $u$.

\subsection{Overview of the problem}

In the past decades, entire solutions to the equation
\begin{equation}
\label{liouville fullspace}(-\Delta)^{n/2} u = e^{nu} \qquad \text{ on } \R^n, \qquad \Lambda := \int_{\R^n} e^{nu} \diff y < \infty,
\end{equation}
have been intensely studied for all values of $n \in \N^*$.

A fundamental observation is that the family of 'bubble' functions
\begin{equation}
\label{sph_sol}
u(x) = \ln \left( \frac{2 ((n-1)!)^{1/n} \lambda}{1 + \lambda^2 |x-x_0|^2} \right), \qquad \lambda > 0, \quad x_0 \in \R^n,
\end{equation}
solve \eqref{liouville fullspace}, for any $n \in \N^*$. These solutions all have the same mass 
\begin{equation}
\label{Lambda1 definition} 
\Lambda \equiv (n-1)! \int_{\R^n} \left(\frac{2}{1+|x|^2} \right)^n \diff x  = (n-1)! |\mathbb S^n|  =: \Lambda_1. 
\end{equation}
By the classification result of Chen and Lin \cite{ChLi}, these are all solutions to \eqref{liouville fullspace} in the classical case $n = 2$. If $n = 1$, the same is true, see \cite{LiMa} and references therein.

It has been put to our attention by Pierre-Damien Thizy that in fact the $n=2$ case was already essentially known by Liouville himself, the proof relies on the interpretation of this problem in terms of holomorphic functions, see \cite[Theorem 1]{CW} and \cite[p. 27]{Bandle}.

An important feature of equation \eqref{liouville fullspace}, which was first noticed in the fundamental paper \cite{Lin} by Lin, is that this classification results ceases to be true in higher dimensions $n \geq 3$. To explain this phenomenon in more detail, for a solution $u$ to \eqref{liouville fullspace}, we introduce its 'normalized version'
\begin{equation}
\label{definition_v}
v(x) := \frac{1}{\gamma_n} \int_{\R^n} \ln \left(\frac{|y|}{|x-y|} \right) e^{nu(y)} \diff y. 
\end{equation}
Here the constant 
\[ \gamma_n := \frac{(n-1)!}{2} |\mathbb S^n| = \frac{\Lambda_1}{2} \] 
is chosen such that $(-\Delta)^{n/2}  \ln \left( \frac{1}{|x|}\right) = \gamma_n \delta_0$ in the sense of distributions. 

\begin{theoremA}[\cite{Lin, Martinazzi, JiMaMaXi, Hyder-structure}]
\label{thm_lin_martinazzi}
Let $n \geq 3$ and let $u$ be a solution to \eqref{liouville fullspace}. Then there exists a polynomial $p$, bounded from above, of even degree at most $n-1$, such that
\[ u(x) = v(x) + p(x), \qquad x \in \R^n, \]
where $v$ is as in \eqref{definition_v}.

Moreover, as $|x| \to \infty$, 
\begin{equation}
\label{v asymptotics} 
v(x) = - \frac{\Lambda}{\gamma_n} \ln (|x|) + o(\ln |x|) 
\end{equation}

If $u(x) = o(|x|^2)$ as $|x| \to \infty$, or if $n=3,4$ and $\Lambda = \Lambda_1$, then $u$ is necessarily of the form \eqref{sph_sol}. 

Finally, if $n = 3,4$, then necessarily $\Lambda \leq \Lambda_1$. 
\end{theoremA}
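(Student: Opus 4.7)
My strategy is to follow the approach pioneered in \cite{Lin, Martinazzi}, with the extensions to odd $n$ and to the low-dimensional classification coming from \cite{JiMaMaXi, Hyder-structure}. Since $\gamma_n^{-1} \ln(1/|x|)$ is the fundamental solution of $(-\Delta)^{n/2}$ on $\R^n$, which is precisely the purpose of the normalization of $\gamma_n$, the function $v$ defined in \eqref{definition_v} satisfies $(-\Delta)^{n/2} v = e^{nu}$ distributionally. Writing the kernel as $\ln(|y|/|x-y|)$ rather than $\ln(1/|x-y|)$ ensures absolute convergence of the integral for all $x$ while only adding an $x$-independent constant. Therefore $w := u - v$ satisfies $(-\Delta)^{n/2} w = 0$ in the distributional sense on $\R^n$ and is smooth by the regularity results of \cite{HyMaMa}.

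To extract the logarithmic asymptotic \eqref{v asymptotics}, I would split the integral defining $v$ into the three regions $\{|y| \leq |x|/2\}$, $\{|x|/2 < |y| < 2|x|\}$, $\{|y| \geq 2|x|\}$. On the first, $\ln(|y|/|x-y|) = -\ln|x| + o(1)$ uniformly, which contributes $-(\Lambda/\gamma_n)\ln|x| + o(\ln|x|)$. The second and third regions contribute $o(\ln|x|)$ via finiteness of $\Lambda$ combined with a Riesz-potential estimate for $|\ln(|y|/|x-y|)|$ near $y = x$. Combined with an a priori polynomial upper bound on $u$ from integrability of $e^{nu}$ via spherical averaging, $w$ has polynomial growth; as an $n/2$-harmonic tempered distribution its Fourier transform is supported at $\{0\}$, so $w$ is a polynomial.

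To see that $p := w$ is bounded above, I would argue by contradiction: if $p(x_k) \to +\infty$ along some sequence, continuity forces $p$ to be large on balls of fixed radius about $x_k$, and since $v$ decays only logarithmically, the contribution of these balls to $\int e^{nu} \diff y$ would be unbounded, contradicting $\Lambda < \infty$. Hence $p$ is bounded above and therefore of even degree or constant. The sharper bound $\deg p \leq n - 1$ requires a more delicate version of the same integrability argument, showing that any leading term of degree $n$ or higher would still generate a region where $e^{nu}$ is too large to integrate, even against the logarithmic decay of $e^{nv}$.

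For the classification statements, the assumption $u(x) = o(|x|^2)$ combined with \eqref{v asymptotics} forces $\deg p \leq 1$; the even-degree and bounded-above constraints then force $p \equiv c$, and the moving-planes method applied to the resulting integral equation for $v$ yields \eqref{sph_sol} as in \cite{ChLi}. The assertions for $n = 3, 4$ rely on a Pohozaev-type identity relating $\Lambda$ to the leading coefficient of $p$, which in those dimensions is rigid enough to force $p$ constant when $\Lambda = \Lambda_1$ and to yield $\Lambda \leq \Lambda_1$ in general; see \cite{Hyder-structure, JiMaMaXi}. The principal obstacle throughout is the nonlocal nature of $(-\Delta)^{n/2}$ for odd $n$, which prevents pointwise integration by parts and forces a systematic use of integral representations and, for the Pohozaev identity, of the $s$-harmonic extension to $\R^{n+1}_+$.
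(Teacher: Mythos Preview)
The paper does not give its own proof of Theorem~A; it is quoted from the cited literature. What the paper does prove is the singular generalization, Theorem~\ref{theorem classification}, in Section~\ref{section classification}. Restricted to entire solutions, that argument recovers the first two assertions of Theorem~A (the decomposition $u=v+p$ and the asymptotics~\eqref{v asymptotics}); the rigidity statements for $u=o(|x|^2)$ and for $n=3,4$ are not treated in this paper at all, so there is nothing to compare your Pohozaev/moving-planes sketch against.

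Your outline tracks the literature and, broadly, the paper's Section~\ref{section classification}. However, the order and the content of the asymptotic step need repair. First, the pointwise claim ``$\ln(|y|/|x-y|)=-\ln|x|+o(1)$ uniformly on $\{|y|\le|x|/2\}$'' is false: one only has $\ln(|y|/|x-y|)=\ln|y|-\ln|x|+O(1)$, and $\ln|y|$ is neither bounded nor $o(1)$. The integrated statement can be rescued by a tail argument, but not as you wrote it. More seriously, your treatment of the annulus $|x|/2<|y|<2|x|$ is a genuine gap. The term that survives is $\int_{B(x,1)}\ln(1/|x-y|)\,e^{nu(y)}\,\diff y$, and calling this a ``Riesz-potential estimate'' controllable by finiteness of $\Lambda$ alone is not correct: the logarithmic kernel is unbounded, and $e^{nu}\in L^1$ gives no bound if mass concentrates near $x$. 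One needs $e^{nu}\in L^r(B(x,1))$ uniformly for some $r>1$, which in turn requires the structural information $e^{nu}\lesssim e^{nv}$ (i.e.\ that $p=u-v$ is bounded above). The paper's argument (Lemmas~\ref{lemma v upper bound}--\ref{lemma Hölder log}) secures exactly this: first the decomposition $u=v+p$ with $p$ upper-bounded is established, using only the elementary lower bound on $v$ from Lemma~\ref{lemma v lower bound}; then this is fed into Lemma~\ref{lemma_average_v} to get $\int_{B(x,1)}e^{nrv}\lesssim|x|^{-nr(\Lambda/\gamma_n-\eps)}$, whence the H\"older estimate~\eqref{v estimate Hölder} closes the argument. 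Your proposed order (asymptotics of $v$ first, polynomial structure second) does not provide the input needed for the asymptotic step.

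A smaller point: showing $w=u-v$ is a polynomial via $\widehat{w}$ supported at the origin is fine provided you first check $w$ is tempered. The paper instead uses a Liouville-type theorem for polyharmonic functions together with the bound $w^+\le u^+\le e^{nu^+}\in L^1$ coming from Lemma~\ref{lemma v lower bound}; your ``spherical averaging'' remark is too vague to stand on its own.
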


Conversely, the following summary of existence results shows that the classification from Theorem \ref{thm_lin_martinazzi} is essentially optimal. 

In fact, it turns out that one can simultaneously prescribe the mass $\Lambda$ and its asmyptotic polynomial $p$ at infinity, provided $\Lambda  \in (0, \Lambda_1)$ and $p$ fulfills  
\begin{equation}
\label{condition P} 
p(x) \to -\infty \quad \text{ as } |x| \to \infty, \qquad \deg p \leq n-1. 
\end{equation}

\begin{theoremB}[\cite{ChCh, WeYe, HyMa, Hyder-existence, Martinazzi2013, Hyder2017}]
\label{thm_existence_polynomial}

Let $n \geq 3$. Given any $\Lambda  \in (0, \Lambda_1)$ and any polynomial $p$ satisfying \eqref{condition P}, there exists a solution $u$ to \eqref{liouville fullspace} such that
\[ u(x) = p(x) - \frac{\Lambda}{\gamma_n} \ln (|x|) + o(\ln |x|) \qquad \text{ as } \, |x| \to \infty. \]

Let $n \geq 5$. Given any $\Lambda \in (0, \infty)$ and any radially symmetric polynomial $p$ satisfying \eqref{condition P}, there exist numbers $c_1, c_2 > 0$ and a solution $u$ to \eqref{liouville fullspace} such that 
\[ u(x) = p(x) + c_1 |x|^2 - c_2 |x|^4 - \frac{\Lambda}{\gamma_n} \ln (|x|) + o(\ln |x|) \qquad \text{ as } \, |x| \to \infty. \]
\end{theoremB}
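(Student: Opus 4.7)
The plan is to seek $u$ directly in the form suggested by Theorem A, namely
\[ u(x) = p(x) + \frac{1}{\gamma_n} \int_{\R^n} \ln \frac{|y|}{|x-y|} f(y) \diff y, \]
for a nonnegative $f \in L^1(\R^n)$ with $\int_{\R^n} f = \Lambda$ to be determined. Since polynomials of degree $\leq n-1$ are annihilated by $(-\Delta)^{n/2}$ and the logarithmic kernel is the fundamental solution normalized by $\gamma_n$, any such $u$ solves \eqref{liouville fullspace} provided the self-consistency relation $f = e^{nu}$ is satisfied. Moreover, by \eqref{v asymptotics}, such $u$ automatically has the asymptotic behaviour $u(x) = p(x) - \frac{\Lambda}{\gamma_n} \ln |x| + o(\ln |x|)$ required by the theorem. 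Thus the construction reduces to finding a fixed point of the nonlinear map
\[ T(f) := \Lambda \frac{e^{n u_f}}{\int_{\R^n} e^{n u_f}}, \qquad u_f(x) := p(x) + \frac{1}{\gamma_n} \int_{\R^n} \ln \frac{|y|}{|x-y|} f(y) \diff y, \]
on a closed convex subset of $\{f \in L^1(\R^n) : f \geq 0, \int f = \Lambda\}$.

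For the first statement, when $\Lambda \in (0, \Lambda_1)$, the main analytical ingredient is a Moser--Trudinger type inequality for logarithmic potentials: whenever $\int f = \Lambda < \Lambda_1$, the integral $\int e^{n u_f}$ is finite and bounded uniformly on bounded sets of $f$, using also the decay $p(x) \to -\infty$ coming from \eqref{condition P} to ensure integrability at infinity. Standard compactness of the log convolution in weighted spaces makes $T$ continuous and compact on a suitable closed convex invariant subset, so that Schauder's fixed point theorem applies. Equivalently, one can phrase this variationally by minimizing a free-energy functional of Lebowitz--Penrose type, whose Euler--Lagrange equation is the desired fixed-point relation.

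For the second statement, with $n \geq 5$ and $\Lambda$ arbitrary, the idea is to replace $p$ by $P(x) := p(x) + c_1 |x|^2 - c_2 |x|^4$, which is still radial, of even degree at most $n - 1$, and tends to $-\infty$ like $-c_2 |x|^4$. The quartic negative leading term enforces integrability of $e^{nu}$ regardless of the mass carried by the log potential, bypassing the subcritical threshold $\Lambda_1$. Restricting to radial functions reduces \eqref{liouville fullspace} to an ODE of order $n$, which can be solved by shooting from the origin; treating $(c_1, c_2) \in (0,\infty)^2$ as free parameters, a continuity and boundary-behaviour analysis (as $c_1 \to 0^+$ or $c_2 \to +\infty$) shows that the map $(c_1, c_2) \mapsto \int e^{n u}$ is surjective onto $(0,+\infty)$, yielding a solution of prescribed mass $\Lambda$.

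The principal obstacle is the nonlocal character of the logarithmic potential $v$: any adjustment of $f$ to satisfy $\int f = \Lambda$ simultaneously modifies the subleading asymptotics of $u$, so one cannot prescribe the polynomial $p$ and the mass $\Lambda$ independently without careful control. In the subcritical regime $\Lambda < \Lambda_1$ this control is afforded by the Moser--Trudinger inequality together with the decay from \eqref{condition P}. In the supercritical regime, the quartic correction $c_1 |x|^2 - c_2 |x|^4$ serves as a two-parameter family of free perturbations that absorbs the mismatch; this is only possible when the degree bound $\deg P \leq n - 1$ actually permits a genuine quartic polynomial, i.e.\ precisely when $n \geq 5$, which explains the dimensional restriction in the second part.
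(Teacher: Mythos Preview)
Theorem B is not proved in this paper; it is stated as a summary of known existence results with citations to \cite{ChCh, WeYe, HyMa, Hyder-existence, Martinazzi2013, Hyder2017}, so there is no ``paper's own proof'' to compare against. What I can do is compare your sketch to the arguments in those references.

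For the first statement (subcritical mass $\Lambda<\Lambda_1$), your outline is broadly in the right spirit. The cited works \cite{ChCh, Hyder-existence} proceed variationally, transferring the problem to $\Sph$ via stereographic projection and minimizing a Liouville-type functional whose coercivity is guaranteed by a (fractional) Moser--Trudinger inequality, exactly the mechanism you invoke. Your alternative Schauder fixed-point formulation is closer to the strategy of \cite{HyMa, HyMaMa}. Either way, the key analytic input is the same, and your identification of the role of the decay from \eqref{condition P} and of the threshold $\Lambda_1$ is correct.

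For the second statement ($n\geq 5$, arbitrary $\Lambda$), however, your plan has a genuine gap. You propose to reduce \eqref{liouville fullspace} to an ODE of order $n$ and apply a shooting argument in the parameters $(c_1,c_2)$. But for odd $n$ the operator $(-\Delta)^{n/2}$ is pseudodifferential, not a local differential operator, so the radial equation is \emph{not} an ODE and no shooting framework is available; the theorem as stated covers all $n\geq 5$, including odd $n$. Even restricting to even $n$, the surjectivity claim for $(c_1,c_2)\mapsto \int e^{nu}$ is asserted without a mechanism, and in fact the numbers $c_1,c_2$ in the statement are outputs of the construction, not inputs one is free to tune. The actual arguments in \cite{Martinazzi2013, Hyder2017} are again variational: one works on $\Sph$ and exploits that in higher dimension the space of admissible asymptotic polynomials is large enough to accommodate an additional quartic term, which supplies the extra compactness needed to minimize beyond the $\Lambda_1$ threshold. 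Your explanation of why $n\geq 5$ is needed---that the degree bound $\deg P\leq n-1$ must allow a genuine quartic---is the correct heuristic, but the execution you sketch does not go through.
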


These findings are in remarkable contrast to the equation $(-\Delta)^k u = u^\frac{n+2k}{n-2k}, u > 0$ on $\R^n$, with $n/2 > k \in \N^*$, which is a closely related conformally invariant of \eqref{liouville singular}. Indeed, for this equation the classification results in \cite{CaGiSp, Lin, WeXu} guarantee that every solution has the standard bubble form corresponding to \eqref{sph_sol}, for every $n \geq 2$ and $k <n/2$.

Compared to the case of entire solutions to \eqref{liouville fullspace}, until recently singular solutions to \eqref{liouville singular} have received much less attention, with the exception of $n = 2$.

If $n = 2$, it has been shown in \cite{PrTa} that for \eqref{liouville singular} a classification result analogous to the entire case holds. Indeed, every solution is of the form 
\[ w(z) = \ln \left( \frac{2 (\alpha +1)\lambda}{1 + \lambda^2 |z^{\alpha+1} - \zeta|^2} \right), \qquad \lambda > 0, \,  \zeta \in \C, \, \alpha > -1, \]
in complex notation, and where necessarily $\zeta = 0$ if $\alpha \notin \N_0$. 

Still when $n = 2$, we mention the papers \cite{MaRu, CaMa}, where existence of solutions to a singular equation corresponding to \eqref{liouville singular} on general Riemannian surfaces has been derived using refined variational arguments and improved Moser-Trudinger inequalities.

\subsection{Main results}
\label{subsection main results}

The main point of our results is to give a version of both the classification and existence results from Theorems A and  B for the case of a general point singularity, i.e. for equation \eqref{liouville singular} instead of \eqref{liouville fullspace}.

Similar to before, for any solution $u$ to \eqref{liouville singular}, we define
\begin{equation}
\label{definition v}
v_u(x) := v(x) := \frac{1}{\gamma_{n}} \int_{\R^{n}} \ln \left( \frac{1 + |y|}{|x-y|} \right) e^{n u(y)} \diff y.
\end{equation}

The following theorem classifies all singular solutions to \eqref{liouville singular} in terms of their asymptotic behavior at 0 and infinity. 

\begin{theorem}
\label{theorem classification}
Let $n \geq 3$ and let $u$ be a solution to \eqref{liouville singular}. Then there exist $\beta \in \R$ and upper-bounded polynomials $p$ and $q$, of degree at most $n-1$, such that 
\begin{equation}
\label{eq classification thm} 
u(x) = v(x) + p(x) + q\left(\frac{x}{|x|^2}\right) + \beta \ln(|x|). 
\end{equation}
Moreover 
\[ \lim_{|x| \to \infty} \frac{v(x)}{\ln(|x|)} = - \frac{\Lambda}{\gamma_{n}} \qquad \text{ and } \qquad \lim_{x \to 0} \frac{v(x)}{\ln(|x|)} = 0. \]
\end{theorem}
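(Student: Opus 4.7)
The plan is to adapt the Lin--Martinazzi decomposition used in Theorem~A, using the conformal inversion \eqref{u bar} to transfer the analysis at infinity to the origin. First, I would verify that the integral \eqref{definition v} converges absolutely for every $x \in \R^n$: the logarithm $\ln\frac{1+|y|}{|x-y|}$ is locally integrable near $y=x$ and grows only like $\ln|y|$ at infinity, hence is integrable against $e^{nu} \in L^1(\R^n)$. Since the additive piece $\ln(1+|y|)$ is $x$-independent and $-\gamma_n^{-1}\ln|x|$ is a fundamental solution of $(-\Delta)^{n/2}$, one obtains $(-\Delta)^{n/2} v = e^{nu}$ as tempered distributions on all of $\R^n$, so that $h := u - v$ satisfies $(-\Delta)^{n/2} h = 0$ on $\R^n \setminus \{0\}$. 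The two asymptotics of $v$ then follow by direct computation: at infinity, dividing by $\ln|x|$ and using dominated convergence gives $v(x)/\ln|x| \to -\Lambda/\gamma_n$; at the origin, splitting the integral at $|y|=2|x|$ shows that the inner piece is $o(\ln(1/|x|))$ by absolute continuity of $e^{nu}\diff y$, while the outer piece is $O(1)$ uniformly in $x$.

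The core step is to identify $h$ with $p(x) + q(x/|x|^2) + \beta\ln|x|$, with $p,q$ upper-bounded polynomials of degree $\leq n-1$. I would first apply the interior estimates underlying Theorem~A on large annuli to obtain polynomial growth of $h$ at infinity of order at most $n-1$. To obtain the corresponding control at the origin, I would apply the same estimates to the inverted solution $\bar{u}(x) = u(x/|x|^2) - 2\ln|x|$, which by conformal invariance is again a solution of \eqref{liouville singular} of the same mass $\Lambda$, and pull the estimate back through the inversion. Once $h$ is polynomially bounded at both ends, a spherical harmonic expansion $h(r\theta) = \sum_{\ell,j} h_{\ell j}(r) Y_{\ell j}(\theta)$ combined with the explicit form of the solutions to $(-\Delta)^{n/2} h = 0$ on the punctured space (radial powers $r^{\ell + 2k}$ and $r^{-\ell - n + 2 - 2k}$ for $k = 0, \ldots, n/2-1$, with a single logarithmic resonance at $\ell = 0$, $k = n/2-1$) truncates each mode to a finite sum that recombines into $p(x) + q(x/|x|^2) + \beta \ln|x|$ with the claimed degree bound. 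Upper-boundedness of $p$ and $q$ then follows from $\int e^{nu}<\infty$: any polynomial mode of $p$ going to $+\infty$ at infinity would make $e^{nu}$ non-integrable there, and likewise for $q$ at the origin via the inversion.

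The main obstacle is the growth estimate on $h$ near the origin. The infinity estimate is essentially an interior-on-annuli application of the machinery behind Theorem~A, but the origin estimate requires this same machinery to survive the inversion \eqref{u bar}, despite $\bar{u}$ having a singularity at the origin (corresponding to that of $u$ at infinity). This transfer is also where the scalar $\beta$ emerges naturally in the decomposition, since the inversion mixes the polynomial and logarithmic parts of the asymptotics of $\bar{u}$ and hence produces a genuine $\ln|x|$ term in the representation of $h$ that cannot be absorbed into either the polynomial part at infinity or the inverted polynomial part at the origin.
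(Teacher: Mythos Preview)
Your claim that $v(x)/\ln|x|\to-\Lambda/\gamma_n$ at infinity follows from dominated convergence is the main gap. After the easy lower bound $v(x)\ge-\tfrac{\Lambda}{\gamma_n}\ln|x|$, the obstruction to the matching upper bound is the local term
\[
\int_{B(x,1)} \ln\frac{1}{|x-y|}\, e^{nu(y)}\diff y,
\]
which must be shown to be $o(\ln|x|)$. The kernel $\ln\frac{1}{|x-\cdot|}$ is unbounded on $B(x,1)$, so no dominating function independent of $x$ exists; controlling this term needs $\|e^{nu}\|_{L^r(B(x,1))}$ uniformly bounded for some $r>1$, and that does \emph{not} follow from $e^{nu}\in L^1(\R^n)$ alone. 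In the paper this is precisely where the work lies: the decomposition $u=v+p+q(\cdot/|\cdot|^2)+\beta\ln|\cdot|$ is established \emph{first} (via a generalized B\^ocher theorem for polyharmonic functions on the punctured space, combined with the inversion to pin down $p$), and only afterwards is it fed back to obtain $L^r$ control on $e^{nu}$ on far-out unit balls. Even so, in the range $\beta\ge\Lambda/\gamma_n$ a further Campanato--H\"older oscillation estimate on $v$ is required to close the argument. Your proposed order---asymptotics of $v$ first, decomposition second---would have to bypass this dependence, and nothing in the proposal does so.

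The asymptotic at $0$ has the same defect: your ``outer piece'' over $\{|y|>2|x|\}$ being $O(1)$ as $x\to 0$ tacitly assumes $\int_{B_1}\ln(1/|y|)\,e^{nu(y)}\diff y<\infty$, which again is not immediate from $e^{nu}\in L^1$. The paper sidesteps any direct argument at the origin by deducing the behaviour at $0$ from that at infinity via the inversion identity $v_{\bar u}(x)=v_u(x/|x|^2)-\tfrac{\Lambda}{\gamma_n}\ln|x|$. Your spherical-harmonic route to the structure of $h=u-v$ is a reasonable alternative to the B\^ocher theorem for even $n$, but for odd $n$ the operator $(-\Delta)^{n/2}$ is nonlocal and the mode-by-mode ODE picture (powers $r^{\ell+2k}$, $r^{-\ell-n+2-2k}$, logarithmic resonance) is not available as stated; the paper handles odd $n$ by passing to the local operator $(-\Delta)^{(n+1)/2}$.
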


For solutions to a special case of equation \eqref{liouville singular}, namely
\begin{equation}
\label{pde_sing_lit}
(-\Delta)^{n/2} u = e^{nu} -  \beta \gamma_n \delta_0,\qquad \int_{\R^n} e^{nu} < \infty,
\end{equation}
Theorem \ref{theorem classification} has been proved in \cite{HyMaMa}. However, even without appealing to the sophisticated existence result in Theorem \ref{theorem existence optimal} below, it is easy to see that there are solutions to \eqref{liouville singular} which do not satisfy \eqref{pde_sing_lit}. For instance, for $u$ is as in Theorem B with $p(x) = -|x|^2$, the inversion $\bar{u}$ defined in \eqref{u bar} solves \eqref{liouville singular} and has $q(x) = -|x|^2$ in \eqref{eq classification thm}. On the other hand, by \cite{HyMaMa} every solution to \eqref{pde_sing_lit} behaves like $\beta \ln |x|$ near zero and thus has necessarily $q \equiv 0$. For $n = 4$, by ODE arguments in the spirit of \cite{FaMa}, one can also easily construct solutions to \eqref{liouville singular} with both $p$ and $q$ non-zero, which thus cannot fulfil \eqref{pde_sing_lit} even after inversion.

Let us now discuss existence results for \eqref{liouville singular} more systematically. It is apparent from the results stated above that a major challenge is to prescribe the mass $\Lambda > 0$ of the solution, simultaneously with its asymptotic behavior, for $\Lambda$ in the largest possible range.

Our next theorem shows that in the singular setting of equation \eqref{liouville singular}, this can be done for every possible value of $\Lambda$, at least for radial functions.

\begin{theorem}
\label{theorem existence optimal}
Let $n \geq 3$, and let $p$ be a radial polynomial satisfying \eqref{condition P}. Suppose either that
\begin{enumerate}
\item[(a)] $q$ is another radial polynomial satisfying \eqref{condition P}, $\beta \in \R$ and $\Lambda > 0$ or that 
\item [(b)] $q \equiv 0$, $\beta > -1$ and $\Lambda \in (0, \Lambda_1(1 + \beta))$. 
\end{enumerate}

Then there exists a solution $u$ to \eqref{liouville singular} satisfying 
\[ u(x) = q\left(\frac{x}{|x|^2}\right) + \beta \ln (|x|) + o(\ln (|x|)) \qquad \text{ as } |x| \to 0 \]
and
\[ u(x) = p(x) + \left(\beta - \frac{\Lambda}{\gamma_n}\right) \ln (|x|) + o(\ln (|x|)) \qquad \text{ as } |x| \to \infty. \]
\end{theorem}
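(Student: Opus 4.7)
The plan is to construct $u$ via a variational scheme in both cases, with the new weighted Moser--Trudinger inequality for radial functions --- advertised in the abstract --- being the essential technical ingredient. In case (b), where $q \equiv 0$, the substitution $u(x) = w(x) + \beta \ln|x|$ converts \eqref{liouville singular} into the weighted Liouville equation
\[ (-\Delta)^{n/2} w = |x|^{n\beta} e^{nw} \text{ on } \R^n, \qquad \int_{\R^n} |x|^{n\beta} e^{nw} \diff x = \Lambda, \]
which is non-singular at the origin since $\beta > -1$. I would then adapt the variational scheme of \cite{WeYe,Hyder-existence,Hyder2017,Martinazzi2013}: approximate on growing balls $B_R$ with boundary data encoding the polynomial $p$, solve the resulting subcritical problem by minimizing the natural Liouville-type functional, and pass to the limit $R \to \infty$.

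Uniform a priori bounds in this limit procedure require a Moser--Trudinger inequality for the weighted measure $|x|^{n\beta}\diff x$ on radial functions. The sharp version (the novelty mentioned in the abstract) has optimal constant $n\Lambda_1(1+\beta)$ --- the factor $(1+\beta)$ reflecting a dimensional shift induced by the weight on the radial integral --- and this yields coercivity of the functional precisely for $\Lambda < \Lambda_1(1+\beta)$, pinning down the sharp admissible range in (b). The desired asymptotic $u(x) = p(x) + (\beta - \Lambda/\gamma_n)\ln|x| + o(\ln|x|)$ at infinity then follows by invoking Theorem \ref{theorem classification} on the constructed $u$, together with standard decay estimates on the logarithmic potential $v_u$ coming from the rapid decay of $e^{nu}$ at infinity.

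Case (a) is easier from the Moser--Trudinger viewpoint: the prescribed singular behaviour $q(x/|x|^2)$ forces $e^{nu(x)} \sim e^{nq(x/|x|^2)}|x|^{n\beta}$ to vanish super-polynomially at $0$, and the prescribed polynomial $p$ at infinity does the same there, so the effective weight against which the variational problem is set up is bounded and rapidly decaying at both ends. Consequently no Moser--Trudinger obstruction arises, and the same scheme delivers solutions for every $\Lambda > 0$; the analytic work here is instead to ensure that the polynomial pieces $p$, $q$ and the log coefficient $\beta$ survive unchanged in the final expansion of $u$, which follows from the choice of the approximating problems on $B_R$ combined with Theorem \ref{theorem classification} applied at the end. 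The principal obstacle of the entire proof is therefore establishing the sharp constant $n\Lambda_1(1+\beta)$ in the weighted Moser--Trudinger inequality for radial functions, which is at the heart of case (b) and determines the optimal range of $\Lambda$.
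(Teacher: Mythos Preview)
Your overall plan is on target: the proof is variational, a weighted radial Moser--Trudinger inequality supplies coercivity, and the $(1+\beta)$ factor in its sharp constant is exactly what fixes the range $\Lambda < \Lambda_1(1+\beta)$ in case~(b). But the paper's implementation differs from your ball-approximation scheme in one decisive way: it pulls the entire problem back to the sphere $\mathbb S^n$ via stereographic projection. The data $p$, $q$, $\beta\ln|x|$ and an auxiliary log-cutoff $u_0$ are absorbed into a single weight $Q$ on $\mathbb S^n$ (vanishing to infinite order at the south pole thanks to $p$, behaving like $d(\cdot,N)^{n\beta}$ at the north pole), the equation is rewritten for the Paneitz operator $P_n$, and the functional
\[
I[u]=\tfrac12\|P_n^{1/2}u\|_2^2+\tfrac{\Lambda}{\gamma_n}\int_{\mathbb S^n}\psi_0\, u-\tfrac{\Lambda}{n}\ln\int_{\mathbb S^n}Q\,e^{nu}
\]
is minimized \emph{directly} over $H^{n/2}_{\mathrm{rad}}(\mathbb S^n)$. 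The weighted Moser--Trudinger inequality is proved on the sphere (Theorem~\ref{theorem mt differential intro}), so coercivity plus the direct method yield a minimizer with no limit in $R$ to take. The asymptotics of $u$ then come for free: the minimizer is continuous on all of $\mathbb S^n$, in particular at the south pole, so the associated $w$ on $\R^n$ has a finite limit at infinity. Theorem~\ref{theorem classification} is not invoked.

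Your exhaustion-by-balls route is a genuine alternative and may be workable, but the sentence ``uniform a priori bounds in this limit procedure require a Moser--Trudinger inequality'' is precisely where it is fragile. The local weighted inequality on $B_R$ (Theorem~\ref{thm mt local}) carries a constant $C(R)$, and the paper explicitly records (Remark~\ref{remark non-validity on R^n}) that the natural global analogue on $\R^n$ --- even with an exponentially decaying weight $|x|^\beta e^{-|x|^\sigma}$ --- is \emph{false}. Hence Moser--Trudinger by itself cannot supply bounds uniform in $R$; you would need a separate compactness or blow-up argument for the sequence $w_R$, which you have not sketched. The sphere reformulation sidesteps this entirely by making the domain compact from the outset, and that is the reason the paper adopts it.
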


As in the case of entire solutions, this shows that the space of singular solutions to \eqref{liouville singular} becomes more complex as the order of derivative increases. Again, this is in stark contrast to $(-\Delta)^k u = u^\frac{n+2k}{n-2k}, u > 0$ on $\R^n \setminus \{0\}$ for $n/2 > k \in \N^*$. Indeed, when $k = 2$ it is proved in \cite{FrKo} that all solutions to this equation are given by a two-parameter family of functions $u_{a,T}(x) = |x|^{(4-n)/2} v_a(\log|x| + T)$, which presents no additional degrees of freedom compared to the basic case $k=1$ analyzed by \cite{CaGiSp, Schoen}. 

%An intuitive explanation for the absence of an upper bound on the admissible mass $\Lambda$ in part (a) of Theorem \ref{theorem existence optimal} is the following. For $\beta > -1$, part (b) guarantees existence of a solution to \eqref{pde_sing_lit} of mass $\Lambda \in (0, \Lambda_1 (1 + \beta))$. Notice that this $u$ behaves like $\beta \ln |x|$ as $x \to 0$, which tends to $-\infty$ if $\beta > 0$. The faster $u$ tends to $-\infty$ as $|x| \to 0$, the larger thus the upper bound $\Lambda_1(1+ \beta)$ for the admissible masses becomes. If one allows a polynomial divergence near zero like $- |x|^{-2}$ for instance, this interval should therefore become all of $(0, \infty)$. 

Part (b) of Theorem \ref{theorem existence optimal}, with $p(x) = -|x|^2$, is proved in \cite[Theorem 1.2]{HyMaMa}. Our more general version answers, for the case of radial $p$, an open question mentioned in \cite[Section 1.1]{HyMaMa}. Since in this case the functions one obtains grow logarithmically at the origin, these are indeed also solutions to \eqref{pde_sing_lit}. However, we use a different method of proof. Namely, the proof in \cite{HyMaMa} proceeds via a fixed point argument, while ours is based on a variational method, closer to the works \cite{ChCh} or \cite{Hyder-existence}. Our method allows to treat the case $q \not \equiv 0$ (which we did not succeed in via the fixed point method of \cite{HyMaMa}) and thus remove the upper bound on $\Lambda$. On the other hand, variants of this fixed point argument also yield existence results for non-radial solutions to equation \eqref{pde_sing_lit} \cite[Theorem 1.3]{HyMaMa}, respectively to its generalization $(-\Delta)^{n/2} u = e^{nu} - \sum_{l=1}^m\beta_l \gamma_n \delta_{P_l}$ with several singularities in points $P_1,...,P_m$ \cite[Theorem 1.2]{HyLiWe}, which have no counterpart in our paper. A variational existence argument in the spirit of our paper covering the case of several singularities when $n = 4$ can however be found in \cite{FaMa}. We also mention \cite{Maalaoui}, where existence results are derived by yet another different method.

The main tool in our approach to prove Theorem \ref{theorem existence optimal} is a weighted Moser-Trudinger inequality for radial functions on the sphere $\Sph$, which may be of independent interest and which we introduce now.   

Let $Q : \Sph \to [0, \infty)$ be a nonnegative weight function on $\Sph$ with the property that 
\begin{equation}
\label{Q decay}
 Q(\eta) \leq C d(\eta, N)^\beta \exp(-d(\eta, S)^{-\sigma}) 
\end{equation}
for some $\sigma, C > 0$, $\beta \in \R$. Here, we denote by $d(\eta, \xi)$ the geodesic distance between two points $\eta, \xi \in \Sph$ on the sphere. We use $N = e_{n+1}$ and $ S= -e_{n+1}$ to denote the north and south pole of $\Sph$.  

Moreover, for $s \in (0, n)$ we define 
\begin{equation}
\label{K definition} 
K_{n,s} =  \frac{\Gamma(\frac{n-s}{2})}{\Gamma(\frac{s}{2}) 2^s \pi^{n/2}}. 
\end{equation}

This constant is chosen such that the Green's function of $(-\Delta)^{s/2}$ on $\R^n$ is given by $K_{n,s} |x-y|^{-n+s}$. 

We prove the following inequality. 

\begin{theorem}
\label{theorem mt differential intro} 
Let $Q$ be as in \eqref{Q decay}. Then there is $C > 0$ such that 
\begin{equation}
\label{ineq corollary}
\int_\Sph \exp \left( \frac{n + \beta}{|\sph|} K_{n,\frac{n}{2}}^{-2}  u^2 \right) Q(\eta)  \leq C. 
\end{equation}
uniformly for $u \in C^\infty(\Sph)$ radial with $\|P_{n}^{1/2} u\|_{L^2(\Sph)} \leq 1$ and $\int_\Sph u = 0$. Moreover, the constant $ \frac{n+\beta}{|\sph|} K_{n,\frac{n}{2}}^{-2}$ is sharp in the sense that if it is replaced by $\gamma >  \frac{n+\beta}{|\sph|}K_{n,\frac{n}{2}}^{-2}$, then the constant $C$ is no longer uniform in $u$.
\end{theorem}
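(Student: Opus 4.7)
The strategy is to transfer the inequality to $\R^n$ via stereographic projection, where the Green's function of $(-\Delta)^{n/4}$ is explicit, then reduce to a sharp pointwise estimate on the radial profile of $u$ and integrate against the weight.

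Using the stereographic projection $\Pi_S : \Sph \setminus \{S\} \to \R^n$ from the south pole and the conformal covariance of $P_n$, the function $\tilde u := u \circ \Pi_S^{-1}$ is radial and satisfies $\|(-\Delta)^{n/4}\tilde u\|_{L^2(\R^n)}^2 = \langle P_n u, u\rangle_{L^2(\Sph)} \leq 1$. Under this map, the weight $Q(\eta)\, d\sigma(\eta)$ pulls back to a measure $\tilde w(x)\, dx$ with $\tilde w(x) \leq C |x|^\beta$ near the origin (since $d(\eta,N)\simeq 2|x|$) and super-exponential decay at infinity (since $d(\eta,S)\simeq 2/|x|$), and the normalization $\int_\Sph u = 0$ becomes a weighted zero-mean condition which by Poincaré-type arguments controls $|\tilde u|$ on, say, $\tfrac{1}{2}\leq|x|\leq 2$.

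The heart of the proof is a sharp pointwise bound on the radial profile. Representing $\tilde u$ as a Riesz potential $\tilde u(x) = K_{n,n/2} \int_{\R^n} |x-y|^{-n/2} f(y)\, dy + c$ with $f := (-\Delta)^{n/4} \tilde u$ of $L^2$-norm at most one, and passing to the angularly averaged kernel $\bar K(r,s) := |\sph|^{-1} \int_{\sph} |re_1 - s\omega|^{-n/2}\, d\omega$, the difference $\tilde u(r) - \tilde u(R)$ reduces to a 1D integral against radial $f$. For $0 < r < R \leq 1$, the dominant contribution comes from $s \in (r,R)$, where $\bar K(r,s) \approx s^{-n/2}$ while $\bar K(R,s)$ is negligible; contributions from $s \notin (r,R)$ are $O(1)$, using integrability of $|e_1 - \omega|^{-n/2}$ on $\sph$ for $n\geq 3$. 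Applying Cauchy--Schwarz against the radial measure $s^{n-1}\, ds$ then yields
\begin{equation*}
|\tilde u(r) - \tilde u(R)|^2 \leq (1+o(1))\, K_{n,n/2}^2\, |\sph|\, \log(R/r) + O(1),
\end{equation*}
and combined with the zero-mean control this gives $\tilde u(r)^2 \leq (1+o(1))\, K_{n,n/2}^2\, |\sph|\, \log(1/r) + O(1)$ for $r \leq 1$.

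Splitting the integral at $|x| = 1$: on $|x|\leq 1$, the pointwise bound gives $\int_{|x|\leq 1} e^{\alpha \tilde u^2} \tilde w\, dx \leq C \int_0^1 r^{n+\beta-1 - \alpha K_{n,n/2}^2 |\sph|}\, dr$, which is finite exactly when $\alpha \leq \frac{n+\beta}{|\sph|} K_{n,n/2}^{-2}$; the borderline case follows from the usual Moser-type decomposition of $u$ into low- and high-frequency parts of $P_n^{1/2} u$. On $|x|>1$, the super-exponential decay of $\tilde w$ absorbs the at-most-polynomial-in-$\log$ growth of $e^{\alpha \tilde u^2}$. Sharpness follows from the standard Moser sequence $\tilde u_\epsilon(r) := c_\epsilon \min(\log(1/\epsilon), \log(1/r))$ normalized by $c_\epsilon \sim (K_{n,n/2}^2 |\sph| \log(1/\epsilon))^{-1/2}$, for which $\int e^{\alpha' \tilde u_\epsilon^2} \tilde w\, dx$ diverges as $\epsilon \to 0$ whenever $\alpha' > \frac{n+\beta}{|\sph|} K_{n,n/2}^{-2}$. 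The main technical obstacle is to pin down the \emph{exact} constant $K_{n,n/2}^2 |\sph|$ in the pointwise estimate: verifying that the contributions from $s \notin (r,R)$ and the corrections to $\bar K(r,s) \approx s^{-n/2}$ produce only $o(1)$ losses in the Cauchy--Schwarz step requires a careful but standard analysis of the transition region $s \sim r$.
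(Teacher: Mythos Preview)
Your outline identifies the correct sharp constant and the right overall structure (stereographic projection, radial Riesz-potential representation, split at $|x|=1$), but there is a genuine gap at the \emph{critical} exponent. The Cauchy--Schwarz step gives at best
\[
\tilde u(r)^2 \;\leq\; K_{n,n/2}^{2}\,|\sph|\,\log(1/r) + C_0
\]
with a \emph{uniform} additive constant $C_0$ (not an $o(1)$ depending only on $r$). Plugging this into the weighted integral at $\alpha = \frac{n+\beta}{|\sph|}K_{n,n/2}^{-2}$ yields exactly $\int_0^1 r^{-1}\,dr = \infty$, so the pointwise bound alone proves only the subcritical inequality. Your sentence ``the borderline case follows from the usual Moser-type decomposition of $u$ into low- and high-frequency parts'' is where the entire difficulty of the theorem is hidden: to close the argument at the sharp constant one must keep track, for each $t=\log(1/r)$, of how much of $\|f\|_{L^2}$ is supported on the annulus $\{s>r\}$, and show that large values of $\tilde u(r)^2 - K_{n,n/2}^2|\sph|\log(1/r)$ occupy a set of bounded measure in $t$. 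That is precisely the content of an Adams--Moser lemma; it does not follow from any soft frequency decomposition.

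The paper's proof takes this route explicitly. It writes $u = G_{n/2}\ast P_n^{1/2}u$ on $\Sph$, projects to $\R^n$, passes to logarithmic radial variables, and then invokes a one-dimensional lemma of Adams type (their Lemma~\ref{lemma adams}) with kernel $a(w,t)\leq 1+g(w,t)$ on $[0,t]$ and $a(w,t)\leq h(w,t)$ off $[0,t]$. Their key new point is that, because no rearrangement is available when $\beta>0$ (see their Remark~\ref{remark oneils ineq}), the error $g(w,t)$ unavoidably contains a $t$-dependent piece like $e^{w-t}$, and they extend Adams' lemma to absorb such terms. Your proposal does not contain any analogue of this step; without it, the critical case remains open. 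The sharpness argument you sketch is fine and matches the paper's.
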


Theorem \ref{theorem mt differential intro} is in fact a special case of a more general inequality valid for any order of derivative strictly between $0$ and $n$. Indeed, the operator $P_n^{1/2}$ can be replaced by either $P_{s}$ with $s \in (0,n)$ or by $P_{2s}^{1/2}$ with $s \in (0,n/2)$, where $P_s$ is the Paneitz operator on $\Sph$ of order $s$; see \eqref{paneitz} for its definition. To keep the introduction focused, we defer a full statement to Theorems \ref{corollary mt differential bis} and \ref{corollary mt differential ter} below.

We stress that if $\beta > 0$, the best constant in the above theorems is strictly larger than the constant $\frac{n}{|\sph|} K_n^{-\frac{n}{n-s}}$ of the standard Moser-Trudinger inequality. In this case, the restriction to radial functions is thus truly essential. Indeed, the improved inequality must fail for a sequence of functions saturating the sharp constant in the unweighted inequality and concentrating in a point near which the weight $Q$ is regular. Since such a point is different from the poles, such functions must of course necessarily be non-radial.

A simple variation of the arguments leading to Theorem \ref{theorem mt differential intro} (respectively, Theorems \ref{corollary mt differential bis} and \ref{corollary mt differential ter}) yields a proof of a corresponding weighted Moser-Trudinger inequality for radial functions on balls of $\R^n$. 

\begin{theorem}
\label{thm mt local}
Let $s \in (0,n)$, $\beta \in \R$ and $R > 0$. Then there is $C = C(R) > 0$ such that 
\begin{equation}
\label{ineq mt local}
\int_{B_R} \exp \left( \frac{n+\beta}{|\sph|} K_{n,s}^{-\frac{n}{n-s}} |u|^\frac{n}{n-s} \right) |x|^\beta \diff x \leq C
\end{equation}
for every $u \in C^\infty_0(B_R)$ radial with $\|(-\Delta)^{s/2} u\|_{L^\frac{n}{s}(B_R)} \leq 1$. 
\end{theorem}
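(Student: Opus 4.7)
The proof mirrors the argument for Theorem \ref{theorem mt differential intro}: represent $u$ via a Riesz potential of its fractional Laplacian, exploit radial symmetry to reduce to an integral operator on the half-line, and then invoke a sharp one-dimensional Adams-type inequality. The weight $|x|^\beta$ is absorbed into the measure and, after a logarithmic change of variables, produces an ``effective dimension'' $n + \beta$, which is what shifts the sharp constant.

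\textbf{Step 1 (Riesz representation and radialisation).} Extending $u$ by zero to $\R^n$ and setting $f := (-\Delta)^{s/2} u$, the choice of $K_{n,s}$ in \eqref{K definition} gives
\[ u(x) = K_{n,s} \int_{\R^n} \frac{f(y)}{|x-y|^{n-s}} \diff y, \qquad \|f\|_{L^{n/s}(\R^n)} \le 1. \]
Averaging $f$ over the rotation group does not increase its $L^{n/s}$ norm and leaves $u$ unchanged (since $u$ is radial), so we may assume $f$ is radial. Integrating $|x-y|^{-(n-s)}$ over the sphere of radius $|y|$ yields a radial kernel $\Phi_s(r,\rho)$ such that
\[ u(r) = K_{n,s} \int_0^\infty \Phi_s(r, \rho)\, f(\rho)\, \rho^{n-1} \diff \rho . \]

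\textbf{Step 2 (Change of variable to the half-line).} Substitute $t = \log(R/r)$, $\tau = \log(R/\rho)$, $U(t) = u(Re^{-t})$ and $F(\tau) = R^{s} e^{-s\tau} f(Re^{-\tau})$. A direct computation shows
\[ \int_{\R^n} |f|^{n/s} \diff x = |\sph| \int_0^\infty |F(\tau)|^{n/s} \diff \tau, \]
so the constraint becomes $\|F\|_{L^{n/s}(\R_+)} \le |\sph|^{-s/n}$. The identity for $u$ transforms into $U(t) = \int_0^\infty \widetilde{\Phi}_s(t,\tau)\, F(\tau) \diff \tau$, where the new kernel $\widetilde{\Phi}_s$, on the relevant diagonal, behaves exactly like the one-dimensional Riesz kernel up to lower-order corrections; in particular one has the Adams-type bound
\[ |\widetilde{\Phi}_s(t,\tau)| \le c_{n,s}\, e^{-(n-s)(\tau - t)_+} + (\text{integrable rest}), \]
with $c_{n,s} K_{n,s} |\sph|^{(n-s)/n}$ the sharp leading constant.

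\textbf{Step 3 (Weighted one-dimensional Moser--Trudinger).} The integral in \eqref{ineq mt local} rewrites, using $|x|^\beta \diff x = |\sph| R^{n+\beta} e^{-(n+\beta)t} \diff t$, as
\[ |\sph| R^{n+\beta} \int_0^\infty \exp\!\Bigl( \tfrac{n+\beta}{|\sph|} K_{n,s}^{-n/(n-s)} |U(t)|^{n/(n-s)} \Bigr) e^{-(n+\beta) t}\, \diff t. \]
This is precisely the quantity controlled by the sharp Adams inequality of \cite{Adams}-type on the half-line (see also \cite{Martinazzi} for the version used in the spherical proof) applied to the convolution operator with kernel $\widetilde{\Phi}_s$ and to the exponential weight $e^{-(n+\beta)t}$. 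A bookkeeping of the constants $c_{n,s}$, $K_{n,s}$ and $|\sph|^{(n-s)/n}$ shows that the critical exponent produced by that inequality is exactly $\tfrac{n+\beta}{|\sph|} K_{n,s}^{-n/(n-s)}$, giving the desired uniform bound.

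\textbf{Main obstacle.} The only delicate point is the diagonal asymptotics of the kernel $\widetilde{\Phi}_s$: one must show that the off-diagonal part contributes only a harmless remainder so that the critical constant is governed solely by the $\tau \to t$ regime. This is the analogue of the sharp-kernel lemma that underlies the proof of Theorem \ref{theorem mt differential intro}; the extra weight $|x|^\beta$ appears only through the factor $e^{-(n+\beta)t}$ in the measure and therefore only shifts $n$ to $n + \beta$ in the final exponent, with no effect on the kernel analysis.
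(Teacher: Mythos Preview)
Your outline follows the paper's strategy (Riesz representation, radial log-coordinates, one-dimensional Adams-type lemma), but there are two genuine gaps.

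\textbf{Gap in Step 1.} For fractional $s$ not an even integer, $(-\Delta)^{s/2}u$ is \emph{not} supported in $B_R$ even though $u\in C^\infty_0(B_R)$; hence the hypothesis $\|(-\Delta)^{s/2}u\|_{L^{n/s}(B_R)}\le 1$ does \emph{not} give $\|f\|_{L^{n/s}(\R^n)}\le 1$, and your representation $u=K_{n,s}\,I_{n-s}*f$ with $f$ supported in $B_R$ is unjustified. The paper does not use the exact representation but rather the pointwise \emph{inequality}
\[
|u(x)|\le K_{n,s}\int_{B_R}|x-y|^{-n+s}\,f(y)\,\diff y,\qquad f:=(-\Delta)^{s/2}u\big|_{B_R},
\]
taken from \cite[Propositions~7 and~8]{Martinazzi2015}. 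This is what lets you work with $f$ on $B_R$ only.

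\textbf{Gap in Steps 2--3.} You assert that $\widetilde\Phi_s(t,\tau)$ equals a main term $c_{n,s}e^{-(n-s)(\tau-t)_+}$ plus an ``integrable rest'' to which the classical Adams lemma applies. This is exactly the point at which the radial (non-rearranged) argument deviates from the classical one. Writing $g_{n-s}(R)=|\sph|^{-1}\int_\sph|e_1-R\omega|^{-(n-s)}\diff\sigma(\omega)$, the radial kernel in log-variables is $g_{n-s}(e^{w-t})$ on $\{w\le t\}$, and for $s<2$ Lemma~\ref{lemma mean value} gives $g_{n-s}$ \emph{increasing} on $[0,1)$ with $g_{n-s}(e^{w-t})\le 1+Ce^{w-t}$ (for $1<s<2$; more singular near $w=t$ for $s\le 1$). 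The error $Ce^{w-t}$ is genuinely $t$-dependent: it is \emph{not} dominated on $[0,t]$ by any function of $w$ alone with bounded $L^{p'}$ norm, so the Adams and Fontana versions of the one-dimensional lemma do not apply. This is precisely why the paper proves the improved Lemma~\ref{lemma adams}, allowing an error $g(w,t)$ with $\int_0^t(g+g^{p'})\,\diff w$ bounded uniformly in $t$. Your ``Main obstacle'' paragraph identifies the right place but draws the wrong conclusion: the off-diagonal part is \emph{not} a harmless remainder for $s<2$, and handling it is the core technical step. (For $s\ge 2$, $g_{n-s}$ is nonincreasing with $g_{n-s}\le g_{n-s}(0)=1$, and then indeed Fontana's lemma suffices; see the Remark after the proof in Section~\ref{subsection proof of thm mt convolution}.)
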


This generalizes the fractional Moser-Trudinger inequality of Martinazzi \cite{Martinazzi2015} to the weighted (and radial) setting. For even dimensions $n$ and $s = n/2$, Theorem \ref{thm mt local}, including the case $\beta > 0$, has been proved in the recent paper \cite{OMaOl} by a somewhat different method going back to \cite{Tarsi}. However, since this method does not pass through a fractional integral formulation (compare the discussion below and Theorem \ref{theorem mt convolution}), it does not seem to extend easily to the general case of fractional derivatives of arbitrary order. Similar weighted inequalities on all of Euclidean space appear in \cite{DoEsTa, dePiDo}.

Let us discuss in some more detail our proof strategy on the example of Theorem \ref{thm mt local}. The general strategy, going back to Adams \cite{Adams}, is to estimate $u$ pointwise by the convolution $v = G_s \ast (-\Delta)^{s/2} u$, where $G_s$ is the Green's function of $(-\Delta)^{s/2}$ and prove a corresponding estimate on the exponential integral of that convolution; see Theorem \ref{theorem mt convolution} below. 

When $\beta < 0$ in Theorem \ref{thm mt local}, the hypothesis of radial symmetry can be dropped by combining Martinazzi's Green's function estimate from \cite{Martinazzi2015} with the weighted fractional integral inequality of Lam and Lu \cite[Theorem 1.1]{LaLu}. Indeed, in this case the weight $|x|^\beta$ is radial-decreasing. The authors of \cite{LaLu}  can thus continue to follow Adams' proof, which consists in replacing $v$ by its symmetric decreasing rearrangement $v^*$ and using a variant of O'Neil's lemma \cite{ONeil} to get appropriate pointwise estimates on $v^*$. 

If $\beta > 0$, the weight $|x|^\beta$ is radial-\emph{increasing} and thus the rearrangement argument breaks down. Somewhat counterintuitively, this remains a challenge even if one restricts a priori to radial functions. Indeed, the terms which arise from transforming to radial variables turn out to have a slightly different structure than the O'Neil's-type estimate mentioned above, which leads to error terms of a different nature; see Remark \ref{remark oneils ineq} below for more details. Overcoming the non-availability of the rearrangement argument just described in the case $\beta > 0$ is in fact one of the main achievements of our proof. We are able to deal with the new error terms through a new improvement of the well-known Adams-Garsia's lemma \cite[Lemma 1]{Adams} stated as Lemma \ref{lemma adams} below.

In the setting on $\Sph$ of Theorem \ref{theorem mt differential intro} (respectively Theorems \ref{corollary mt differential bis} and \ref{corollary mt differential ter}), we strongly believe that when $-n <\beta < 0$, a symmetrization argument can equally allow to drop the radial symmetry assumption. To our knowledge, this has only been carried out rigorously for the special case $n = 4$ and $s = 2$ in the recent paper \cite{FaMa}.

\section{Classification results}
\label{section classification}

In this section we prove Theorem \ref{theorem classification}. We split the proof into two propositions to be proved in the following two subsections.

\subsection{The behavior of $u-v$}

The following proposition is the main result of this subsection. 

\begin{proposition}
\label{prop_u_decomposition}
Let $u$ solve \eqref{liouville singular} and let $v$ be defined by \eqref{definition v}. Then 
\[ u(x) - v(x) = p(x) + q\left(\frac{x}{|x|^2}\right) + \beta \ln (|x|) \]
for some polynomials $p$, $q$ and some $\beta \in \R$. Moreover, $p$ and $q$ are bounded from above with $\deg p, \deg q \leq n-1$. If $n$ is even, then $\deg p, \deg q \leq n-2$. 
\end{proposition}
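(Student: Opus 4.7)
The plan is to set $h := u - v$, establish that $(-\Delta)^{n/2} h = 0$ on $\R^n \setminus \{0\}$ and that $h$ has polynomial growth at both $0$ and $\infty$, and then use a distributional/inversion argument to read off the announced decomposition.

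First, using the fundamental solution identity $(-\Delta)^{n/2}_x \ln |x-y|^{-1} = \gamma_n \delta_y(x)$ together with the $x$-independence of the additive $\ln(1 + |y|)$ term in \eqref{definition v}, a direct computation shows $(-\Delta)^{n/2} v = e^{nu}$ on all of $\R^n$. Subtracting this from the equation satisfied by $u$ immediately gives $(-\Delta)^{n/2} h = 0$ on $\R^n \setminus \{0\}$.

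Next, I would bound $h$ polynomially at both ends. The explicit integral \eqref{definition v} combined with $\int e^{nu} < \infty$ yields $v(x) = -\frac{\Lambda}{\gamma_n} \ln |x| + o(\ln|x|)$ at infinity and $v(x) = o(\ln|x|)$ near the origin. For $u$ itself, I would adapt the mean-value and Poisson-representation arguments from \cite{Martinazzi, JiMaMaXi} used for entire solutions: the non-negativity of $(-\Delta)^{n/2} u = e^{nu}$ together with the finite mass condition produces an upper bound $u(x) \leq C(1 + |x|^{n-1})$ as $|x| \to \infty$, which then transfers via the conformal inversion $\bar u$ of \eqref{u bar} to $u(x) \leq C|x|^{-(n-1)}$ as $x \to 0$. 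Hence $h$ is a tempered distribution with at most polynomial growth of order $n-1$ at both $0$ and $\infty$.

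With $h \in \Ste'(\R^n)$, the global distribution $(-\Delta)^{n/2} h$ is supported in $\{0\}$ and therefore equals $\sum_{|\alpha| \leq N} c_\alpha \partial^\alpha \delta_0$ for some finite $N$. Inverting via the fundamental solution $-\gamma_n^{-1} \ln|x|$ and adjusting by an element of the (polynomial) kernel of $(-\Delta)^{n/2}$ on tempered distributions of polynomial growth yields
\[ h(x) = p(x) + \beta \ln|x| + \sum_{|\alpha|\geq 1} \tilde c_\alpha\, \partial^\alpha \ln|x|. \]
A short induction, starting from $\partial_i \ln|x| = (x/|x|^2)_i$, shows that each $\partial^\alpha \ln|x|$ is homogeneous of degree $-|\alpha|$ and may be rewritten as $q_\alpha(x/|x|^2)$ for a polynomial $q_\alpha$; collecting these contributions into a single polynomial $q$ produces the desired identity $h(x) = p(x) + q(x/|x|^2) + \beta \ln|x|$. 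The growth bounds from the previous step then force $\deg p, \deg q \leq n-1$, the upper-boundedness of $p$ and $q$ follows from $\int e^{nu} < \infty$ (polynomial parts diverging to $+\infty$ along any direction would spoil integrability), and the parity improvement $\deg \leq n-2$ when $n$ is even reflects the elementary fact that an upper-bounded polynomial must have even leading degree.

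The main obstacle is the a priori polynomial growth bound on $u$: it requires adapting the entire-case estimates to the singular setting and using the inversion $\bar u$ carefully to control the behavior near $0$, since a naive mean-value argument at the origin would be polluted by the unknown singular part of $u$ that we are trying to characterize.
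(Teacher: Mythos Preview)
Your overall architecture is right, and the endgame---recognising $h$ as polyharmonic on $\R^n\setminus\{0\}$ and reading off the structure ``polynomial $+$ derivatives of $\ln|x|$''---is exactly what the paper does via the generalised B\^ocher theorem of Futamura--Kishi--Mizuta. The genuine gap is in your growth-control step, which is circular.

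You assert $v(x) = -\tfrac{\Lambda}{\gamma_n}\ln|x| + o(\ln|x|)$ at infinity ``directly from the integral''. That is precisely Proposition~\ref{prop v asymptotics}, whose proof in the paper \emph{uses} Proposition~\ref{prop_u_decomposition}; only the one-sided lower bound $v(x)\geq -\tfrac{\Lambda}{\gamma_n}\ln|x|$ (Lemma~\ref{lemma v lower bound}) is elementary. Likewise, the bound $u(x)\leq C(1+|x|^{n-1})$ that you want to import from \cite{Martinazzi, JiMaMaXi} is, in those papers, a \emph{consequence} of the representation $u=v+p$, not an input to it. So both ingredients you need to make $h$ tempered---an upper bound on $u$ and a matching upper bound on $v$---sit downstream of the result you are proving. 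You flag the first of these as ``the main obstacle'', but it is not merely technical: without an independent argument there is no proof.

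The paper sidesteps this entirely. B\^ocher's theorem requires only the one-sided local condition $\int_{B_1\setminus\{0\}}\eta^+(x)\,|x|^s\,dx<\infty$ for some $s\geq 0$, and this follows for $s=0$ from the trivial chain $\eta^+\leq u^+\leq e^{nu^+}\in L^1$, using only $v\geq 0$ on $B_1$. This yields $\eta=p+q(x/|x|^2)+\beta\ln|x|$ with $\deg q\leq n-1$ but $p$ merely \emph{polyharmonic}, not yet polynomial. The polynomiality and degree bound on $p$ then come from a separate second step: apply the same B\^ocher decomposition to the inversion $\bar u$, compare the two via \eqref{v of u bar}, and invoke the Liouville theorem for polyharmonic functions to force $\deg p\leq n-1$. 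Your proposal collapses these two steps into one by assuming global two-sided polynomial control up front, which is exactly what is unavailable.

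A smaller gap: your argument that $p,q$ are bounded above (``diverging to $+\infty$ along any direction would spoil integrability'') is incomplete, since an upper-unbounded polynomial need not tend to $+\infty$ along any fixed ray. The paper handles this via Gorin's theorem \cite{Gorin}, extracting a sequence with $p(x_k)\geq |x_k|^s$ and using $|\nabla p|\lesssim |x|^{n-2}$ to fatten to balls on which $e^{nu}$ has divergent mass.
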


\begin{lemma}
\label{lemma v equation}
The function $v$ defined in \eqref{definition v} satisfies 
\[ (-\Delta)^{n/2} v = e^{nu} \qquad \text{ on } \R^{n}  \]
in the sense of distributions.
\end{lemma}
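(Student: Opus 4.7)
The plan is to verify the distributional identity by testing against a test function $\varphi \in C_c^\infty(\R^n)$: we need to show
\[ \int_{\R^n} v(x) (-\Delta)^{n/2} \varphi(x) \diff x = \int_{\R^n} e^{nu(y)} \varphi(y) \diff y. \]
Substituting the definition \eqref{definition v} of $v$ and interchanging the order of integration via Fubini, the left-hand side becomes
\[ \frac{1}{\gamma_n} \int_{\R^n} e^{nu(y)} \left( \int_{\R^n} \ln \frac{1+|y|}{|x-y|} (-\Delta)^{n/2} \varphi(x) \diff x \right) \diff y. \]

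The next step is to split the logarithm as $\ln(1+|y|) + \ln\frac{1}{|x-y|}$. The first piece is independent of $x$ and contributes a factor $\ln(1+|y|)\int_{\R^n}(-\Delta)^{n/2}\varphi(x) \diff x$, which vanishes because the Fourier transform of $(-\Delta)^{n/2}\varphi$ is $|\xi|^n \widehat{\varphi}(\xi)$ and hence equals zero at the origin. For the second piece, the distributional definition \eqref{distributional definition} applied to the translate $\ln(1/|x-y|)$, together with the normalization $(-\Delta)^{n/2}\ln(1/|x|)=\gamma_n\delta_0$ recalled just before Theorem A, yields exactly $\gamma_n\varphi(y)$. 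Plugging this back collapses the $y$-integral to $\int_{\R^n}e^{nu(y)}\varphi(y)\diff y$, which is the desired identity.

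The one point that requires care, and which I expect to be the main obstacle, is justifying the use of Fubini, namely the absolute integrability of
\[ \left|\ln\tfrac{1+|y|}{|x-y|}\right|\cdot \bigl|(-\Delta)^{n/2}\varphi(x)\bigr|\cdot e^{nu(y)} \]
on $\R^n \times \R^n$. When $n$ is even, $(-\Delta)^{n/2}\varphi$ is compactly supported and bounded, so $x$ ranges over a fixed compact set $K$; the logarithm is then dominated by $C(1+\ln(1+|y|)+|\ln|x-y||)$ uniformly for $x\in K$, and integrating in $y$ against the finite measure $e^{nu}\diff y$ is elementary since $\int e^{nu}<\infty$ and the logarithmic singularity in $|x-y|$ is locally integrable. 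When $n$ is odd, $(-\Delta)^{n/2}\varphi(x)$ is no longer compactly supported but decays like $|x|^{-2n}$ at infinity, and combining this decay with the same pointwise bound on the logarithm again gives absolute integrability, allowing Fubini to be applied and completing the proof.
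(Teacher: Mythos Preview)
Your approach is essentially identical to the paper's: test against $\varphi$, apply Fubini, split off the constant $\ln(1+|y|)$ (whose contribution vanishes since $\int (-\Delta)^{n/2}\varphi = 0$), and invoke $(-\Delta)^{n/2}\ln(1/|\cdot|) = \gamma_n\delta_0$ for the remaining piece. The paper in fact does not justify Fubini at all, so you are being more careful.

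There is, however, one imprecision in your Fubini justification. Your pointwise bound $\bigl|\ln\tfrac{1+|y|}{|x-y|}\bigr| \leq C(1+\ln(1+|y|)+|\ln|x-y||)$ is correct, but the subsequent claim that this is integrable in $y$ against $e^{nu}$ using only $\Lambda = \int e^{nu} < \infty$ does not follow: nothing rules out, say, $e^{nu(y)} \sim |y|^{-n}(\ln|y|)^{-2}$ at infinity, for which $\int \ln(1+|y|)\,e^{nu(y)}\diff y = \infty$. The fix is to avoid splitting the logarithm for large $|y|$. For $x\in K$ compact and $|y| \geq 2\sup_K|x|+1$, one has $\tfrac{1}{3} \leq \tfrac{1+|y|}{|x-y|} \leq 4$, so the full quotient is uniformly bounded there; the remaining region in $y$ is bounded and only carries the locally integrable singularity at $y=x$. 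With this sharper bound, Fubini goes through using only $\Lambda < \infty$, and the rest of your argument is fine.
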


\begin{proof}
This proof is very standard and amounts just to reproving that $\frac{1}{\gamma_{n}} \ln(\frac{1}{|x-y|})$ is the Green's function for $(-\Delta)^{n/2}$ on $\R^{n}$. To emphasize its generality, we write $f := e^{nu} \in L^1(\R^{n})$. Let $\varphi \in C^\infty_0(\R^{n})$. Then by Fubini
\begin{align*}
 \int_{\R^{n}} v(x) (-\Delta)^{n/2} \varphi(x) \diff x = \frac{1}{\gamma_{n}} \int_{\R^{n}} f(y) \left( \int_{\R^n} \ln \left( \frac{1+|y|}{|x-y|} \right) (-\Delta)^{n/2} \varphi(x) \diff x \right) \diff y . 
\end{align*} 
Firstly, we observe that
\[ \int_{\R^n} \ln ( 1 + |y|) (-\Delta)^{n/2} \varphi(x) \diff x = 0 \]
Then we are left with showing that
\[ \frac{1}{\gamma_{n}} \int_{\R^n} \ln \left( \frac{1}{|x-y|} \right) (-\Delta)^{n/2} \varphi(x) \diff x = \varphi(y). \]
Since $\varphi$ is smooth, this can be done by spliting the integral into the part over $B_\eps(y)$ and over its complement, integrate by parts and use that $(-\Delta)^i \frac{1}{|x-y|}$ is integrable on $\R^n$ for $i = 0,1,...,m-1$. 
\end{proof}

\begin{lemma}
\label{lemma v lower bound}
Let $u$ solve \eqref{liouville singular} and let $v$ be defined as in \eqref{definition v}. Then $v(x) \geq 0$ if $|x| \leq 1$. Moreover, for all $|x| \geq 1$ we have 
\begin{equation}
\label{eq_v_lowerbound} 
v(x) \geq - \frac{\Lambda}{\gamma_{n}} \ln(|x|).
\end{equation}
\end{lemma}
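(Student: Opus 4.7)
The plan is to prove both bounds by showing that the integrand defining $v(x)$ (suitably shifted in the case $|x|\geq 1$) is pointwise nonnegative on $\R^n$, which forces the integral itself to be nonnegative since $e^{nu}\geq 0$.

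For the first claim, assume $|x|\leq 1$. I would observe that pointwise in $y$,
\[ \ln\left(\frac{1+|y|}{|x-y|}\right) \geq 0 \iff 1 + |y| \geq |x-y|, \]
and the right-hand inequality follows immediately from the triangle inequality $|x-y| \leq |x| + |y| \leq 1 + |y|$, using $|x|\leq 1$. Since $e^{nu(y)} \geq 0$, integrating yields $v(x)\geq 0$.

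For the second claim, assume $|x|\geq 1$. Write
\[ v(x) + \frac{\Lambda}{\gamma_n}\ln(|x|) = \frac{1}{\gamma_n}\int_{\R^n} \ln\left(\frac{|x|(1+|y|)}{|x-y|}\right) e^{nu(y)} \diff y, \]
using the definition of $\Lambda$ to absorb the $\ln(|x|)$ factor into the integrand. Again it suffices to show the integrand is pointwise nonnegative, i.e. $|x|(1+|y|) \geq |x-y|$. Expanding, $|x|(1+|y|) = |x| + |x||y| \geq |x| + |y| \geq |x-y|$, where the first inequality uses $|x|\geq 1$ and the second is the triangle inequality. Conclude as before.

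There is no real obstacle; the only thing to be mindful of is the algebraic rearrangement that lets one absorb $\frac{\Lambda}{\gamma_n}\ln|x|$ inside the logarithm in the second case, after which both statements reduce to two one-line applications of the triangle inequality.
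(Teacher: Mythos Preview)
Your proof is correct and follows essentially the same approach as the paper: both rely on the triangle inequality to show $|x-y|\leq 1+|y|$ when $|x|\leq 1$ and $|x-y|\leq |x|(1+|y|)$ when $|x|\geq 1$, then integrate against $e^{nu}\geq 0$. The only cosmetic difference is that you explicitly absorb $\frac{\Lambda}{\gamma_n}\ln|x|$ into the integrand, whereas the paper equivalently bounds the kernel directly by $\ln(1/|x|)$ before integrating.
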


\begin{proof}
The proof of \eqref{eq_v_lowerbound} is identical to \cite[Lemma 3.1]{HyMaMa}: Let $|x| \geq 1$, then 
\[ |x-y| \leq |x| + |y| \leq |x|(1 + |y|) \]
and therefore
\[ \ln \left( \frac{1+|y|}{|x-y|} \right) \geq \ln \left(\frac{1}{|x|}\right) . \]
Similarly, if $|x| \leq 1$, we have 
\[ |x-y| \leq |x| + |y| \leq 1 + |y| \]
and therefore 
\[ \ln \left(\frac{1+|y|}{|x-y|}\right) \geq \ln (1) = 0. \]
Inserting these estimates into \eqref{definition v}, we obtain the conclusion. 
\end{proof}

We can now give the 

\begin{proof}
[Proof of Proposition \ref{prop_u_decomposition}]
We first prove the proposition under the assumption that $n \geq 4$ is even and then give the necessary modifications in case $n \geq 3$ is odd. 

\textit{Step 1. Bôcher's Theorem.    }
By Lemma \ref{lemma v equation}, we know that $\eta := u-v$ satisfies $(-\Delta)^{n/2} \eta = 0$ on $\R^{n} \setminus \{0\}$. In particular, $\eta$ is smooth on $\R^n \setminus \{0\}$. 

Then the generalized Bôcher theorem in \cite{FuKiMi} (applied on a sequence of balls $B_R(0)$ with radii $R$ tending to $\infty$) implies that 
\[ \eta = p +  \sum_{\alpha \leq s + n- 1} c_\alpha D^\alpha \ln \left(\frac{1}{|x|}\right) \qquad \text{ on } \quad \R^n \setminus \{0\}, \]
where $p \in C^\infty(\R^n)$ satisfies $(-\Delta)^{n/2} p = 0$ on $\R^n$ and $s$ is some non-negative integer such that 
\[ \int_{B_1(0) \setminus \{0\}} \eta^+(x) |x|^s \diff x < \infty. \]
Using Lemma \ref{lemma v lower bound} we can estimate, for $|x| \leq 1$, $\eta^+(x) \leq u^+(x) \leq e^{n u^+(x)}$. This is integrable, so we can take $s = 0$. Since it is easily proved by induction that $D^k \ln \left(\frac{1}{|x|}\right) = q_k\left(\frac{x}{|x|^2}\right)$ for every $|k|\geq 1$, where $q_k$ is a polynomial of degree at most $|k|$, we conclude that 
\begin{equation}
\label{expansion_u_proof}
 u(x) = v(x) + p(x) + q\left(\frac{x}{|x|^2}\right) + \beta \ln(|x|)
  \end{equation}
with $(-\Delta)^{n/2} p = 0$ on $\R^n$ and $\deg q \leq n -1$.

\textit{Step 2. Inversion.     }

Again, for definiteness suppose first that $n$ is even. 
We shall show in this step that $p$ must be a polynomial of degree at most $n -1$ as well. To do so, we make use of the conformal invariance of \eqref{liouville singular}. More precisely, the Kelvin transform $\bar{u}(x) := u\left(\frac{x}{|x|^2}\right) - 2 \ln (|x|)$ is also a solution to \eqref{liouville singular} with same mass $\Lambda$ and, by the above, can be written as 
\[ \bar{u}(x) = v_{\bar{u}}(x) + \tilde{p}(x) + \tilde{q}\left(\frac{x}{|x|^2}\right) + \tilde{\beta} \ln (|x|), \]
for some $\tilde{\beta} \in \R$, with $v_{\bar{u}}$ given by \eqref{definition v}. Moreover, a change of variables in the definition of $v$ shows the relation 
\begin{equation}
\label{v of u bar} 
v_{\bar{u}}(x) = v_u\left(\frac{x}{|x|^2}\right) - \frac{\Lambda}{\gamma_{n}}\ln(|x|). 
\end{equation}
Putting these identities together, we find 
\begin{equation}
\label{inversion_identity}
\tilde{p}(x) + \tilde{q}\left(\frac{x}{|x|^2}\right) + \left(\tilde{\beta} - \frac{\Lambda}{\gamma_{n}}\right)\ln (|x|) = p\left(\frac{x}{|x|^2}\right) + q(x) -\beta \ln (|x|) 
\end{equation} 
for all $x \neq 0$, where $(-\Delta)^{n/2} \tilde{p} = 0$ and $\deg \tilde{q} \leq n-1$. We claim that \eqref{inversion_identity} implies $\tilde{q} = p$. Indeed, by \eqref{inversion_identity} we have 
\[ \limsup_{|x| \to \infty} \frac{p(x)}{|x|^{n-1}} = \limsup_{|x|\to 0} |x|^{n-1} p\left(\frac{x}{|x|^2}\right) =  \limsup_{|x|\to 0} |x|^{n-1} \tilde{q}\left(\frac{x}{|x|^2}\right) < \infty \]
because $\deg \tilde{q} \leq n-1$. Thus $p(x) \leq 1 + |x|^{n-1}$ on $\R^n$ and the generalized Liouville theorem \cite[Theorem 5]{Martinazzi} implies that $p$ is a polynomial of degree $\deg p \leq n -1$. Once we know that both $\tilde{q}$ and $p$ must be polynomials, it is straightforward to deduce from \eqref{inversion_identity} that in fact $\tilde{q} = p$. 

(By exchanging the roles of $u$ and $\bar{u}$, we may of course deduce as well that $\tilde{p} = q$.)

\textit{Step 3.   Upper-boundedness and optimal degree.  }
We finish the proof by showing that $p$ and $q$ are bounded from above. In particular, $p$ and $q$ must be of even degree and thus $\deg p, \deg q \leq n -2$. Since $q = \tilde{p}$ as in Step 2, it suffices to prove that $p$ is bounded from above.

If $p$ is unbounded from above, inspired by Lemma 11 in \cite{Martinazzi}, then (see Theorem 3.1 in \cite{Gorin} and \cite[proof of Lemma 3.3]{HyMaMa}) there is $s > 0$ and a sequence $x_k$ with $|x_k| \to \infty$ such that $p(x_k) \geq |x_k|^s$. Since $\deg p \leq n-1$, we have $|\nabla p(y)| \lesssim |y|^{n-2}$ for all $|y|$ large enough. Thus, using also Lemma \ref{lemma v lower bound}, we have 
\[ u(x) \geq p(x) + \left(\beta- \frac{\Lambda}{\gamma_n} \right)\ln (|x|) - C \gtrsim |x_k|^s \]
for all $x$ in a ball $B_{\rho_k}(x_k)$ of radius $\rho_k \gtrsim |x_k|^\frac{s}{n-2}$. Therefore, for some $c, d > 0$
\[ \int_{\R^n} e^{n u}  dx\geq \int_{B_{\rho_k}(x_k)} e^{n u} dx\geq \rho_k^{n} e^{n c |x_k|^s} \geq d |x_k|^\frac{ns}{n-2}  e^{n c |x_k|^s} \to \infty \qquad \text{ as  } k \to \infty. \]
This contradiction proves that $p$ must be bounded from above. 

\textit{Step 4. $n$ odd.  } If $n$ is odd, we use that $(-\Delta)^\frac{n+1}{2} \eta = 0$ in Step 1 instead. Using the Bôcher theorem as above, we find that $u$ satisfies \eqref{expansion_u_proof}, but with $\deg q \leq n$. The argument from Step 2 then gives $\deg p \leq n$. Repeating Step 3, $p$ and $q$ are bounded from above. In particular, they are of even degree, hence $\deg p, \deg q \leq n-1$. 
\end{proof}

\subsection{The asymptotic behavior of $v$ at $\infty$}

The purpose of this subsection is to establish the leading-order behavior of $v(x)$ as $|x| \to \infty$. 

\begin{proposition}
\label{prop v asymptotics}
 $\lim_{|x| \to \infty} \frac{v_u(x)}{\ln(|x|)} = -\frac{\Lambda}{\gamma_{n}}$. 
\end{proposition}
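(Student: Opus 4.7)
The lower bound $\liminf_{|x|\to\infty} v(x)/\log|x| \geq -\Lambda/\gamma_n$ is immediate from Lemma \ref{lemma v lower bound}, so the task is the matching upper bound. The plan is to rewrite the excess as
\begin{equation*}
\gamma_n v(x) + \Lambda \log|x| = \int_{\R^n} \log\frac{|x|(1+|y|)}{|x-y|}\, e^{nu(y)}\, dy =: I(x), \qquad |x|\geq 1,
\end{equation*}
whose integrand is pointwise nonnegative because $|x-y|\leq|x|+|y|\leq|x|(1+|y|)$ for $|x|\geq 1$. It then suffices to prove $I(x)=o(\log|x|)$.

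I would fix $\eps>0$, choose $R=R(\eps)>0$ with $\int_{|y|>R} e^{nu}<\eps$, and split $I(x)$ into three pieces. On $\{|y|\leq R\}$ and for $|x|\geq 2R$, the bound $|x-y|\geq|x|/2$ forces the integrand to be at most $\log(2(1+R))$, giving contribution $O_R(1)$. On $\{|y|>R,\,|x-y|\geq 1\}$, a case analysis (separating $R<|y|\leq 2|x|$ from $|y|>2|x|$, where one uses $|x-y|\geq|y|/2$) yields a pointwise estimate $\log\frac{|x|(1+|y|)}{|x-y|}\leq 2\log|x|+O(1)$, whose contribution is at most $C\eps\log|x|$. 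On the final region $\{|x-y|<1\}$ one has $|y|=|x|+O(1)$, so the integrand decomposes as $2\log|x|+O(1)+\log(1/|x-y|)$; since $\int_{B_1(x)} e^{nu}<\eps$ for $|x|$ large, the non-singular part again contributes $O(\eps\log|x|)$, leaving the residual term
\begin{equation*}
J(x):=\int_{B_1(x)} \log\frac{1}{|x-y|}\, e^{nu(y)}\, dy
\end{equation*}
that must be shown to be $o(\log|x|)$.

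Controlling $J(x)$ will be the core difficulty: $L^1$-smallness $\int_{B_1(x)} e^{nu}\to 0$ is insufficient because $\log(1/|x-y|)$ is unbounded at $y=x$, so I need uniform higher integrability of $e^{nu}$ on $B_1(x)$. My plan is to exploit the equation: on $B_2(x)$, $(-\Delta)^{n/2}u=e^{nu}$ with an $L^1$-small right-hand side, and $u$ is smooth there since $B_2(x)$ avoids the origin for $|x|$ large. Decomposing $u=u_1+u_2$ on $B_2(x)$ with $u_1$ solving a zero-Dirichlet polyharmonic problem with data $e^{nu}$ and $u_2$ polyharmonic, an Adams/Brezis--Merle-type exponential integrability yields $e^{pnu_1}\in L^1(B_2(x))$ uniformly for some $p>1$, while $u_2$ is controlled on $B_1(x)$ via a mean-value bound combined with the global form $u=v+p+q(x/|x|^2)+\beta\log|x|$ from Proposition \ref{prop_u_decomposition}. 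The resulting uniform estimate $\|e^{nu}\|_{L^p(B_1(x))}\leq C$ gives, via Hölder, $J(x)\leq C\|\log(1/|x-y|)\|_{L^{p'}(B_1(0))}=O(1)$.

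Combining the three regional contributions and dividing by $\log|x|$ yields $\limsup_{|x|\to\infty} I(x)/\log|x|\leq C\eps$, and sending $\eps\to 0$ closes the argument. The principal obstacle is the uniform higher-integrability estimate for $e^{nu}$ near $y=x$ at large $|x|$; the rest is careful regional bookkeeping.
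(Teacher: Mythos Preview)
Your reduction to controlling $J(x)=\int_{B_1(x)}\log\frac{1}{|x-y|}\,e^{nu(y)}\,dy$ is correct and essentially reproduces the paper's Lemma \ref{lemma v upper bound}. The gap is in your proposed control of the polyharmonic part $u_2$. A mean-value/interior estimate for $(-\Delta)^{n/2}$-harmonic functions only gives $\|u_2\|_{L^\infty(B_1(x))}\leq C\|u_2\|_{L^1(B_2(x))}$, and $\|u_2\|_{L^1}\sim\|u\|_{L^1}+\|u_1\|_{L^1}$. Even after stripping off the polynomial $p$ (which is itself polyharmonic), the remaining part $v+q(\cdot/|\cdot|^2)+\beta\log|\cdot|$ has $L^1$ norm of order $\log|x|$ on $B_2(x)$: the lower bound on $v$ from Lemma \ref{lemma v lower bound} already forces $\|v\|_{L^1(B_2(x))}\gtrsim\log|x|$, and the $\beta\log|\cdot|$ term contributes the same. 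Hence at best $\sup_{B_1(x)}u_2\leq C\log|x|$, so $e^{nu_2}\leq|x|^{nC}$ and your H\"older argument yields $\int_{B_1(x)}e^{nru}\leq C|x|^{nrC}$, which grows rather than being uniform. (There is no one-sided maximum principle for polyharmonic functions that would let you avoid the two-sided $L^1$ control.)

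This is precisely the obstruction the paper confronts. It is resolved by a \emph{sharp} exponential-integrability estimate for $v$ itself (Lemma \ref{lemma_average_v}), namely $\int_{B(x,1)}e^{tv}\leq C|x|^{-t(\Lambda/\gamma_n-\eps)}$, whose specific exponent is essential: combined with $e^{nu}\lesssim|x|^{n\beta}e^{nv}$ from Proposition \ref{prop_u_decomposition}, it gives $\int_{B_1(x)}e^{nru}\lesssim|x|^{nr(\beta-\Lambda/\gamma_n+\eps)}$, which is uniformly bounded only when $\beta<\Lambda/\gamma_n$. In the complementary regime $\beta\geq\Lambda/\gamma_n$ a direct $L^r$ bound on $e^{nu}$ is \emph{not} obtained; instead the paper proves a Campanato-type H\"older estimate on $v$ (Lemma \ref{lemma Hölder log}) and argues by contradiction against Lemma \ref{lemma_average_v}. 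Your proposal does not distinguish these two cases and the Brezis--Merle route, as sketched, cannot recover the sharp exponent needed even in the easy case.
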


We recapitulate in what follows the strategy in \cite{HyMaMa}, which carries over to our case and yields a proof of Proposition \ref{prop v asymptotics}. In Section \ref{subsec alternative asymptotics} below we present an alternative approach, which in our opinion is simpler and more direct, but which does not work in the general case, except if $n= 3,4$. 

Recall that by Lemma \ref{lemma v lower bound}, $v(x) \geq - \frac{\Lambda}{\gamma_n} \ln (|x|)$ for all $|x| \geq 1$. The following lemma yields a first step towards the desired reverse inequality. 

\begin{lemma}
\label{lemma v upper bound}
For every $\epsilon > 0$ there is $R > 0$ such that for all $|x| \geq R$ 
\begin{equation}
\label{v_bound_lin} v(x) \leq \left( -\frac{\Lambda}{\gamma_{n}} + \eps \right) \ln(|x|) + \int_{B(x,1)} \ln \left(\frac{1}{|x-y|} \right) e^{nu(y)} \diff y. 
\end{equation}
\end{lemma}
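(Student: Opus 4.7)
The strategy is to exploit that $e^{nu}$ has finite total mass $\Lambda$, so most of it is concentrated in a large ball $\{|y|\leq R\}$ where $R$ depends only on $\varepsilon$. Set $m(R) := \int_{\{|y| \leq R\}} e^{nu(y)} \diff y$, and pick $R$ large enough that $\Lambda - m(R) < \gamma_n \varepsilon/6$. For $|x| \geq 2R+1$, I would split $\R^n$ into three pairwise disjoint regions $B_1 := \{|y| \leq R\}$, $B_3 := B(x,1)$, and $B_2 := \R^n \setminus (B_1 \cup B_3)$, and separately estimate the contribution of each to the integral defining $\gamma_n v(x)$.

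On $B_1$, use $|x-y| \geq |x|/2$ to get $\ln\bigl((1+|y|)/|x-y|\bigr) \leq \ln(1+R) + \ln 2 - \ln|x|$, which contributes at most $-m(R)\ln|x| + C(R)$. On $B_3 \subset \{|y| > R\}$, split $\ln\bigl((1+|y|)/|x-y|\bigr) = \ln(1+|y|) + \ln(1/|x-y|)$ and bound $\ln(1+|y|) \leq \ln(|x|+2)$ using $|y| \leq |x|+1$; this produces at most $(\Lambda - m(R))\ln(|x|+2)$ plus exactly the residual integral $\int_{B(x,1)}\ln(1/|x-y|)\,e^{nu(y)}\diff y$ appearing in the lemma. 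On $B_2 \subset \{|y| > R\}$, treat separately $\{|y| \leq 2|x|\}$, where $|x-y| \geq 1$ gives $\ln\bigl((1+|y|)/|x-y|\bigr) \leq \ln(1+2|x|)$, and $\{|y| > 2|x|\}$, where $|x-y| \geq |y|/2$ makes the log uniformly bounded by $\ln 4$. The $B_2$-contribution is thus at most $(\Lambda - m(R))\ln(3|x|)$.

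Summing, the coefficient of $\ln|x|$ in $\gamma_n v(x)$ is $-m(R) + 2(\Lambda - m(R)) = -\Lambda + 3(\Lambda - m(R))$, which by the choice of $R$ is at most $-\Lambda + \gamma_n \varepsilon/2$. Taking $|x|$ sufficiently large so that the $R$-dependent $O(1)$ terms are absorbed into $(\gamma_n \varepsilon/2)\ln|x|$, then dividing by $\gamma_n$ and using $\gamma_n \geq 1$ to drop the $1/\gamma_n$ in front of the nonnegative residual integral on $B(x,1)$, yields the inequality stated in the lemma.

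The main obstacle is simply the bookkeeping: both $B_2$ and $B_3$ independently contribute positive $\ln|x|$-terms of size $(\Lambda - m(R))\ln|x|$, which together with the leading $-m(R)\ln|x|$ from $B_1$ overshoot the desired coefficient $-\Lambda$ by $3(\Lambda - m(R))\ln|x|$. The essential point is that this triple of the tail mass is still made arbitrarily small by a single choice of $R$, because $m(R) \to \Lambda$ as $R \to \infty$; beyond this observation, the proof requires no analytic tool other than the finiteness of $\Lambda$.
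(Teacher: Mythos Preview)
Your argument is correct and is precisely the standard decomposition underlying the references the paper cites (Lin's proof of (2.11) and Martinazzi's Lemma~9): split according to whether $y$ lies in a large fixed ball, in $B(x,1)$, or elsewhere, and use that the tail mass $\Lambda - m(R)$ can be made arbitrarily small. The only cosmetic point is the final step where you invoke $\gamma_n \geq 1$ to drop the factor $1/\gamma_n$ in front of the residual integral; this is fine (and indeed $\gamma_n = \tfrac{(n-1)!}{2}|\mathbb{S}^n| > 1$ for all $n\geq 3$), though the cited proofs simply keep the $1/\gamma_n$, since for the applications only a uniform bound on the residual integral is needed.
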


\begin{proof}
See \cite[proof of (2.11)]{Lin}. There $n=4$, but the proof is the same for general dimension $n \geq 3$, compare \cite[Lemma 9]{Martinazzi}
\end{proof}

The following lemma corresponds to \cite[Lemma 3.5]{HyMaMa}. (For simplicity, we only consider a radius equal to one.)   

\begin{lemma}
\label{lemma_average_v}
For every $t \in [1, \infty)$ and $\eps \in (0, \frac{\Lambda}{\gamma_n})$, there is $C> 0$ such that for all $x \in \R^n$, 
\[ \int_{B(x,1)} e^{tv(y)} \diff y \leq \frac{C}{|x|^{(\frac{\Lambda}{\gamma_n} - \eps)t}}. \]
\end{lemma}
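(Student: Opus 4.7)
The plan is to reduce the claimed bound to a uniform estimate on the exponential integral of a \emph{local} logarithmic potential, then establish the latter via Jensen's inequality by exploiting the crucial fact that the $e^{nu}$-mass of the ball $B(x,2)$ vanishes as $|x|\to\infty$.

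\textbf{Step 1 (reduction via Lemma \ref{lemma v upper bound}).} For $|x|\geq R$ with $R$ large, and $y\in B(x,1)$, one has $|y|\in[|x|/2,2|x|]$. Since $-\Lambda/\gamma_n+\eps<0$, applying Lemma \ref{lemma v upper bound} at $y$ gives
\[ v(y) \leq \Big(-\frac{\Lambda}{\gamma_n}+\eps\Big)\ln|x|+C+w(y), \qquad w(y):=\int_{B(y,1)}\ln\frac{1}{|y-z|}\,e^{nu(z)}\,\diff z \geq 0. \]
Exponentiating and integrating in $y$,
\[ \int_{B(x,1)}e^{tv(y)}\,\diff y \leq e^{Ct}\,|x|^{-(\Lambda/\gamma_n-\eps)t}\int_{B(x,1)}e^{tw(y)}\,\diff y, \]
so it is enough to bound the last factor by a constant, uniformly in $x$.

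\textbf{Step 2 (the key estimate).} Since $B(y,1)\subset B(x,2)$ for $y\in B(x,1)$, setting $f:=\chi_{B(x,2)}e^{nu}$ and $\Lambda_x:=\|f\|_{L^1}$ gives $w(y)=\int f(z)\ln_+|y-z|^{-1}\,\diff z$. As $e^{nu}\in L^1(\R^n)$, absolute continuity yields $\Lambda_x\to 0$ as $|x|\to\infty$; in particular, choosing $R$ large enough depending only on $t$, we may assume $t\Lambda_x<n/2$ uniformly for $|x|\geq R$. Jensen's inequality with the probability measure $\Lambda_x^{-1}f(z)\,\diff z$ (using convexity of the exponential) gives
\[ e^{tw(y)} \leq \Lambda_x^{-1}\int f(z)\,\max\!\big(1,|y-z|^{-t\Lambda_x}\big)\,\diff z. \]
Integrating over $y\in B(x,1)$, using Fubini and the elementary uniform bound
\[ \int_{B(x,1)}\max(1,|y-z|^{-\alpha})\,\diff y \leq C(n,\alpha) \quad\text{for all } z\in\R^n \text{ and } \alpha<n, \]
we conclude $\int_{B(x,1)}e^{tw(y)}\,\diff y\leq C\Lambda_x^{-1}\|f\|_{L^1}=C$. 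Combined with Step 1, this gives the desired estimate for $|x|\geq R$.

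\textbf{Step 3 (bounded $|x|$) and the main obstacle.} For $|x|\leq R$, the target upper bound $C|x|^{-(\Lambda/\gamma_n-\eps)t}$ is bounded from below by a positive constant, so it suffices to show $\int_{B(0,R+1)}e^{tv}\,\diff y<\infty$. The central difficulty — here and implicitly in Step 2 — is to arrange that the effective Jensen exponent $t\,\|e^{nu}\|_{L^1(\text{local ball})}$ stays strictly below $n$, so that the resulting integrand $|y-z|^{-\alpha}$ is locally integrable. For large $|x|$ this is automatic from the vanishing of the local mass of $e^{nu}$ at infinity; for bounded $|x|$ one covers $B(0,R+1)$ by finitely many small balls on each of which $\|e^{nu}\|_{L^1}<n\gamma_n/(2t)$ (possible since $e^{nu}\in L^1$ and $u\in C^\infty(\R^n\setminus\{0\})$, so mass concentration can occur only at $0$ where absolute continuity again saves us), and applies the same Jensen estimate to the full potential defining $v$ on each piece. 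Combining the two regimes yields the uniform bound stated in the lemma.
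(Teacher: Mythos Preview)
Your proof is correct and follows essentially the same route as the paper's referenced argument from \cite[Lemma 3.5]{HyMaMa}: split $v$ into a far contribution carrying the $(-\Lambda/\gamma_n+\eps)\ln|x|$ decay (this is precisely Lemma~\ref{lemma v upper bound}) and a near contribution given by a local logarithmic potential, then control the exponential integral of the latter via Jensen's inequality using that the local mass $\int_{B(x,2)}e^{nu}$ is small for $|x|$ large. Step~3 is sketched but the idea is right; the covering-plus-Jensen argument for $\int_{B(0,R+1)}e^{tv}<\infty$ is the standard Br\'ezis--Merle computation and is indeed what the paper invokes elsewhere (cf.\ the reference to \cite[Theorem~2.1]{HyMaMa} around \eqref{e^nqv < infty}).
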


\begin{proof}
[Proof of Lemma \ref{lemma_average_v}]
The proof is identical to \cite[Proof of Lemma 3.5]{HyMaMa}, with the only difference that the term $|y|^{n\beta} e^{nu(y)}$ is replaced by $e^{nu(y)}$. The proof works exactly the same because $\int_{\R^n} |y|^{n\beta} e^{nu(y)} \diff y$ is assumed to be finite in the normalization of \cite{HyMaMa}, where in our normalization this assumption becomes $\Lambda = \int_{\R^n} e^{nu(y)} \diff y < \infty$. 
\end{proof}

Lemma \ref{lemma_average_v} is already enough to prove Proposition \ref{prop v asymptotics} in the case when the coefficient $\beta$ of $\ln(|x|)$ in Proposition \ref{prop_u_decomposition} satisfies $\beta < \frac{\Lambda}{\gamma_n}$. 

\begin{proof}
[Proof of Proposition \ref{prop v asymptotics} in the case $\beta < \frac{\Lambda}{\gamma_n}$]
In view of Lemmas \ref{lemma v lower bound} and \ref{lemma v upper bound}, it remains to control the integral on the right side of \eqref{v_bound_lin}. For this purpose, we claim that there is $r > 1$, $R>0$ such that for all $|x| \geq R$,
\begin{equation}
\label{u_in_Lr}
\|e^{nu}\|_{L^r(B(x,1))} \leq C .
\end{equation}
Indeed, by Hölder's inequality this implies 
\begin{equation}
\label{v estimate Hölder} 
\int_{B(x.1)} \ln \left(\frac{1}{|x-y|} \right) e^{nu(y)} \diff y \leq \left( \int_{B(x,1)} \left( \ln \left(\frac{1}{|x-y|}\right) \right)^\frac{r}{r-1} \diff y \right)^\frac{r-1}{r} \|e^{nu}\|_{L^r(B(x,1))} \leq C. 
\end{equation}
Hence 
\[ \limsup_{|x| \to \infty} \frac{v(x)}{\ln(|x|)} \leq -\frac{\Lambda}{\gamma_{n}} + \eps \]
and since $\eps > 0$ was arbitrary, the proof is complete. 

We now prove \eqref{u_in_Lr}. Since $p$ and $q$ from Proposition \ref{prop_u_decomposition} are upper-bounded, and by Lemma \ref{lemma_average_v} applied with $q = nr$, 
\[ \int_{B(x,1)} e^{nru} \lesssim |x|^{n \beta r} \int_{B(x,1)} e^{nrv}  \lesssim |x|^{rn (\beta  - \frac{\Lambda}{\gamma_n} + \eps)}. \]
Since $\beta < \frac{\Lambda}{\gamma_n}$ by assumption, we can pick $\eps >0$ so small that $\beta - \frac{\Lambda}{\gamma_n} + \eps < 0$. Thus 
\[ \int_{B(x,1)} e^{nru} \to 0 \]
as $|x| \to \infty$. In particular \eqref{u_in_Lr} holds. 
\end{proof}

It remains to prove Proposition \ref{prop v asymptotics} in the case $\beta \geq \frac{\Lambda}{\gamma_n}$. The additional ingredient for this is the following Hölder-norm estimate.

\begin{lemma}
\label{lemma Hölder log}
Let $u$ solve \eqref{liouville singular} and let $v$ be defined by \eqref{definition v}. Suppose that $\beta \geq \frac{\Lambda}{\gamma_n}$, where $\beta$ is as in Proposition \ref{prop_u_decomposition}. Then as $|x| \to \infty$, 
\[ [v]_{C^{0, (\ln(|x|+1))^{-1}}(B(x,1))} = o(\ln (|x| + 1)). \] 
\end{lemma}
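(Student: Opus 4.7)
Plan: I will adapt the approach of \cite[Lemma 3.6]{HyMaMa}. Starting from the difference formula
\[ v(y) - v(z) = -\frac{1}{\gamma_n}\int_{\R^n} \ln \frac{|y-w|}{|z-w|}\, e^{nu(w)}\, dw, \]
I fix $y, z \in B(x, 1)$ with $r := |y-z| \leq 1$, set $\alpha := (\ln(|x|+1))^{-1}$, and split the integration into a far region $\{|y-w| \geq 1\}$ and a near region $\{|y-w| < 1\} \subset B(x, 2)$. On the far region the pointwise estimate $|\ln(|y-w|/|z-w|)| \leq 2r/|y-w|$ gives a contribution $\lesssim r\Lambda$, which after division by $r^\alpha$ yields $O(1) = o(\ln|x|)$.

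For the near region I would apply H\"older's inequality with the critical choice $p = 1 + 1/\ln(|x|+1)$ (so $p' \approx \ln(|x|+1)$):
\[ \int_{B(x, 2)} \left| \ln \tfrac{|y-w|}{|z-w|}\right| e^{nu}\, dw \leq \left\|\ln \tfrac{|y-\cdot|}{|z-\cdot|} \right\|_{L^{p'}(B(x,2))} \|e^{nu}\|_{L^{p}(B(x, 2))}. \]
A direct computation in polar coordinates, splitting once more at $|y-w| = 2r$, bounds the first factor by $C r^{n/p'}(1 + |\ln r|)$. To control the second factor I would interpolate between $L^1$ and $L^2$: with $\theta = 2/p - 1 = 1 - O(1/\ln|x|)$,
\[ \|e^{nu}\|_{L^p(B(x,2))} \leq \|e^{nu}\|_{L^1(B(x, 2))}^\theta \|e^{nu}\|_{L^2(B(x, 2))}^{1-\theta}. \]
By Proposition \ref{prop_u_decomposition}, $u \leq v + \beta \ln|x| + C$ on $B(x, 1)$, so Lemma \ref{lemma_average_v} with $t = 2n$ gives $\|e^{nu}\|_{L^2(B(x, 2))} \leq C|x|^{n(\beta - \Lambda/\gamma_n + \eps')}$, whose $(1-\theta)$-th power is $|x|^{O(1/\ln|x|)} = O(1)$. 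Meanwhile $\|e^{nu}\|_{L^1(B(x, 2))} \to 0$ by integrability of $e^{nu}$ on $\R^n$, and $\theta$ stays bounded below, so $\|e^{nu}\|_{L^p(B(x, 2))} = o(1)$.

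Dividing by $r^\alpha$, the near contribution is at most $C\,o(1)\,r^{n/p' - \alpha}(1 + |\ln r|)$. Since $n/p' - \alpha \approx (n-1)/\ln|x|$ is positive, a calculus exercise yields $\sup_{r \in (0, 1]} r^{n/p' - \alpha}(1 + |\ln r|) = O(\ln|x|)$, so the near contribution is $o(\ln|x|)$, completing the proof. The crux of the argument is the delicate calibration of $p$: it must be close enough to $1$ that the polynomial growth $|x|^{n(\beta - \Lambda/\gamma_n + \eps')}$ of the $L^2$-norm (unavoidable when $\beta \geq \Lambda/\gamma_n$) is suppressed by $(1 - \theta) = O(1/\ln|x|)$, yet $p'$ must remain finite so that the logarithmic factor lies in $L^{p'}$ with controllable norm; the specific choice $p = 1 + 1/\ln(|x|+1)$ is precisely matched to the H\"older exponent $\alpha$ in the statement.
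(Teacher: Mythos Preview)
Your route is genuinely different from the paper's. The paper follows \cite[Lemma 3.6]{HyMaMa} and first establishes a Campanato-type estimate
\[
\sup_{\rho\in(0,4]}\rho^{-n-1/\ln|x|}\int_{B(x,\rho)}\Big|v-\tfrac{1}{|B(x,\rho)|}\int_{B(x,\rho)}v\Big|\;\longrightarrow\;0,
\]
and only then invokes the standard Campanato-to-H\"older argument. You instead work directly with $v(y)-v(z)$ via the integral representation and apply H\"older with the calibrated exponent $p=1+1/\ln(|x|+1)$. This is more elementary (no Campanato machinery) and gives the H\"older bound in one stroke; the Campanato route has the advantage that the oscillation integral absorbs the logarithmic singularity automatically, without having to track the dependence of $\|\ln\|_{L^{p'}}$ on $p'$.

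There is, however, a slip in your intermediate estimate. The claimed bound $\|\ln(|y-\cdot|/|z-\cdot|)\|_{L^{p'}}\le Cr^{n/p'}(1+|\ln r|)$ is false as stated: after rescaling $w\mapsto r\tilde w$ on $\{|y-w|<2r\}$ one is left with $r^{n/p'}$ times the $L^{p'}$-norm of a fixed logarithmic function on a unit-scale set, and $\big(\int_0^1|\ln\rho|^{p'}\rho^{n-1}d\rho\big)^{1/p'}=n^{-1-1/p'}\Gamma(p'+1)^{1/p'}\sim p'/(ne)$ grows like $p'\approx\ln|x|$. The correct bound is therefore $Cr^{n/p'}(1+|\ln r|+p')$. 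Fortunately this does not damage the conclusion: since $r^{n/p'-\alpha}\le 1$ on $(0,1]$, the extra term contributes at most $o(1)\cdot p'=o(\ln|x|)$ after multiplication by $\|e^{nu}\|_{L^p(B(x,2))}=o(1)$ and division by $r^\alpha$. (A second, minor point: the far-region estimate $|\ln(|y-w|/|z-w|)|\le 2r/|y-w|$ needs $|y-w|\ge 2r$, so for $r\in(1/2,1]$ the annulus $\{1\le|y-w|<2r\}$ should be absorbed into the near region; this is harmless.) With these corrections your argument goes through.
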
 

\begin{proof}
Again, the proof is identical to \cite[proof of Lemma 3.6]{HyMaMa}, up to replacing the term $|y|^{n\beta} e^{nu(y)}$  by $e^{nu(y)}$. One proceeds by first proving the Campanato-type estimate
\[ \sup_{\rho \in (0, 4]} \frac{1}{\rho^{n + \frac{1}{\ln (|x|)}}} \int_{B(x, \rho)} \left | v(y) - \frac{1}{|B(x, \rho)|} \int_{B(x, \rho)} v(z) \diff z  \right| \diff y \to 0, \qquad \text{ as } |x| \to \infty, \]
and then transforming this to the desired Hölder-type bound by a standard argument. 

Let us point out that thanks to the classification result from Proposition \ref{prop_u_decomposition} and upper-boundedness of the polynomial $q$ the bound 
\[ e^{nu(y)} \lesssim |x|^{n \beta} e^{nv(y)} e^{np(y)}, \]
which is used in \cite[proof of Lemma 3.6]{HyMaMa} still holds. Also, the bound from \cite[Lemma 3.3]{HyMaMa} 
\[ \int_{B(x, \rho)} e^{qp} \lesssim |x|^{n(\frac{\Lambda}{\gamma_n} - \beta)}, \]
for every $q \geq 1$, $|x| \geq 1$ and $\rho$ small enough (uniformly in $x$ and $\rho$), still holds, by the same proof as given there. 
\end{proof}

We can now use Lemma \ref{lemma Hölder log} to give the proof of Proposition \ref{prop v asymptotics} in the remaining case $\beta \geq \frac{\Lambda}{\gamma_n}$. 

\begin{proof}
[Proof of Proposition \ref{prop v asymptotics} if $\beta \geq \frac{\Lambda}{\gamma_n}$]
By contradiction, in view of Lemma \ref{lemma v lower bound}, assume that there is $\delta > 0$ and a sequence $|x_k| \to \infty$ such that 
\[ v(x_k) \geq \left(-\frac{\Lambda}{\gamma_n} + 3 \delta \right) \ln (|x_k|). \]
By Lemma \ref{lemma Hölder log}, for any $y \in B(x_k,1)$, we have 
\[ v(y) = v(x_k) + o(\ln (|x_k|)) \geq \left(-\frac{\Lambda}{\gamma_n} + 2 \delta\right) \ln (|x_k|). \]
Thus 
\[ \int_{B(x_k,1)} e^{nv(y)} \diff y \geq |B_1| |x_k|^{-n(\frac{\Lambda}{\gamma_n} - 2 \delta)}. \]
On the other hand, by Lemma \ref{lemma_average_v} with $\eps = \delta$, 
\[ \int_{B(x_k,1)} e^{nv(y)} \diff y  \lesssim |x_k|^{-n(\frac{\Lambda}{\gamma_n} - \delta)}, \]
a contradiction. 
\end{proof}

\begin{proof}
[Proof of Theorem \ref{theorem classification}]
Theorem \ref{theorem classification} follows from Propositions  \ref{prop_u_decomposition} and \ref{prop v asymptotics}. 
\end{proof}

\subsection{The asymptotic behavior of $v$ at $\infty$: an alternative approach}
\label{subsec alternative asymptotics}

Given the relatively lengthy and involved proof of Proposition \ref{prop v asymptotics} in the previous subsection, we think that it is interesting to mention here an alternative strategy to find the asymptotic behavior of $v$ at $\infty$.  It exploits the conformal invariance of equation \eqref{liouville singular} and relies on the following simple-looking property of polynomials on $\R^n$.

\begin{conjecture}
\label{conjecture polynomials}
Let $q$ be a polynomial on $\R^n$. If $\sigma > -n$ is such that $\int_{\R^n \setminus B_1} |x|^\sigma e^{q(x)} \diff x < \infty$, then there is $\eps >0$ such that $\int_{\R^n \setminus B_1} |x|^{\sigma + \eps} e^{q(x)} \diff x < \infty$. 
\end{conjecture}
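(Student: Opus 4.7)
The plan is to recast the conjecture as a polynomial decay estimate on the angular integral
\begin{equation*}
F(r) := \int_{\sph} e^{q(r\theta)}\,d\theta,
\end{equation*}
and then extract the gain in integrability from an asymptotic expansion of $F$ at infinity. Passing to polar coordinates, the hypothesis is $\int_1^\infty r^{\sigma+n-1} F(r)\,dr < \infty$ and the conclusion the same with $\sigma$ replaced by $\sigma+\eps$ for some $\eps>0$. Hence it suffices to produce a quantitative decay rate for $F$ with a margin beyond the integrability threshold $\sigma+n$.

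The next step is to exploit the polynomial structure of $q$. Decompose $q = q_0 + q_1 + \cdots + q_d$ into homogeneous parts with $d = \deg q$ and $q_d \not\equiv 0$. A direction $\theta \in \sph$ is \emph{good}, in the sense that $q(r\theta) \to -\infty$, iff the first non-vanishing entry of the tuple $(q_d(\theta), q_{d-1}(\theta), \ldots, q_1(\theta))$ is strictly negative. The complementary ``bad'' set is semi-algebraic and must have $(n{-}1)$-measure zero on $\sph$, since otherwise the cone over it would already produce $\int |x|^\sigma e^q\,dx = +\infty$ for any $\sigma>-n$. In particular $q_d \leq 0$ on $\sph$, and on the complement of a $\delta$-neighborhood of the zero set $Z_d := \{q_d = 0\} \cap \sph$, the \L{}ojasiewicz inequality yields $q_d(\theta) \leq -c\,\delta^\alpha$, so $q(r\theta) \leq -\frac{c}{2}\delta^\alpha r^d$ for large $r$; the contribution to $F(r)$ from this region decays faster than any polynomial. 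The bulk of the work is to handle the $\delta$-tubular neighborhood of $Z_d$ in $\sph$, where $q_d$ is small and the lower-order homogeneous parts $q_{d-1}, q_{d-2}, \ldots$ take over. Iterating the above dichotomy on the degenerate directions, or equivalently invoking a Hironaka-type resolution of singularities (or a Newton-polyhedral decomposition of the Laplace-type integral), should produce an asymptotic expansion
\begin{equation*}
F(r) \sim \sum_{j \geq 0} c_j\, r^{-\alpha_j} (\log r)^{k_j} \qquad \text{as } r \to \infty,
\end{equation*}
with $c_0 > 0$, $\alpha_0 < \alpha_1 < \cdots$ a finite set of rational exponents, and $k_j \in \N_0$.

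Once such an expansion is available, the conclusion is essentially automatic: the finiteness of $\int_1^\infty r^{\sigma + n - 1} F(r)\,dr$ forces the \emph{strict} inequality $\alpha_0 > \sigma + n$, because $c_0 > 0$ and $\int_R^\infty r^{-1}(\log r)^{k_0}\,dr = +\infty$ for every $k_0 \geq 0$ rules out the borderline case. Any choice $\eps \in (0,\alpha_0 - \sigma - n)$ then yields $\int |x|^{\sigma+\eps} e^{q}\,dx < \infty$. The principal obstacle is clearly the construction of the asymptotic expansion: for $d \leq 2$ it reduces to Gaussian steepest descent on $\sph$ and is elementary, but in higher degree the interaction between $q_d$ and lower-order parts on $Z_d$ genuinely requires either an inductive \L{}ojasiewicz analysis along the nested degeneracy loci or one of the singularity-theoretic machineries mentioned above. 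Adapting existing Newton-polyhedron asymptotics for oscillatory integrals with polynomial phase to the real-analytic Laplace integral $F(r)$ appears to be the most direct route.
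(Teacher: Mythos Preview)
The paper does \emph{not} prove this statement: it is presented as a conjecture, and the authors say explicitly that they are ``unfortunately not able to prove Conjecture~\ref{conjecture polynomials} for general polynomials~$q$.'' They establish only the partial cases $\deg q \le 3$ and the separated-variable hypothesis~\eqref{assumption polynomial}. In the latter case an elementary Fubini computation shows that finiteness of the integral is equivalent to the open condition $\sigma < -n+k$; the former reduces (upper-boundedness forces even degree $\le 2$, then diagonalize the quadratic) to that case. So there is no ``paper's own proof'' of the full conjecture to compare against.

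Your proposal is therefore more ambitious than what the paper achieves. The reduction to the angular integral $F(r)$ and the observation that an expansion $F(r)\sim c_0\, r^{-\alpha_0}(\log r)^{k_0}+\cdots$ with $c_0>0$ would immediately force the finiteness set in $\sigma$ to be open are both correct and clean. The gap is exactly where you yourself flag it: producing that expansion. The difficulty is that $q(r\theta)=\sum_j r^j q_j(\theta)$ is not of the standard Laplace form $-\lambda\phi(\theta)$ with a single large parameter; the lower-degree homogeneous parts enter with different powers of $r$ and interact with the degeneracies of $q_d$ on its zero set $Z_d$. The Varchenko/Newton-polyhedron/Hironaka machinery you invoke treats $\int e^{-\lambda\phi}a$ for a \emph{fixed} phase $\phi$, not a phase that itself varies with the large parameter in a multi-scale way, so it does not apply off the shelf. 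Nor is the inductive \L{}ojasiewicz scheme you sketch obviously terminating: on $Z_d$ one does have $q_{d-1}\le 0$ (from upper-boundedness of $q$ along rays), but in a tubular neighborhood of $Z_d$ the sign of $q_{d-1}$ is uncontrolled, and tracking the contributions through several nested degeneracy strata is precisely the hard part. What you have written is a plausible research program that, if completed, would supersede the paper's partial result; as it stands it is a sketch, not a proof.
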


%Notice carefully that Conjecture \ref{conjecture polynomials} fails to be true for certain non-polynomial $q$. Such examples can easily be found from the observation that $\int_1^\infty \frac{1}{r (\ln (2r))^2} \diff r < \infty$, while $\int_1^\infty \frac{1}{r^{1-\eps} (\ln (2r))^2} \diff r = \infty$ for every $\eps > 0$. 

Admitting that Conjecture \ref{conjecture polynomials} is true, a straightforward proof of Proposition \ref{prop v asymptotics} goes as follows. 

\begin{proof}[Proof of Proposition \ref{prop v asymptotics} given Conjecture \ref{conjecture polynomials}]
By conformal invariance, $\bar{u} = u(x/|x|^2) - 2 \ln (|x|)$, is again a solution to \eqref{liouville singular}. In view of \eqref{v of u bar}, it suffices to prove that $v_{\bar{u}}$ is bounded near 0. 
After exchanging $\bar{u}$ for $u$, this follows from the definition \eqref{definition v} of $v$, if we can show. 
\begin{equation}
\label{e^nsu}
\int_{B_1} e^{nsu(x)} \diff x < \infty 
\end{equation}
for some $s > 1$. 

By the classification result from Proposition \ref{prop_u_decomposition}, we have
\begin{equation}
\label{e^nqu decomposition} 
e^{nsu} = e^{nsv} |x|^{n\beta s} e^{ns(p(x) + q(x/|x|^2))}. \end{equation}
Since $p$ and $q$ are bounded from above, the last factor is uniformly bounded on $B_1$. Moreover, arguing as in \cite[Proof of Theorem 2.1]{HyMaMa}, for every $t > 1$ we have
\begin{equation}
\label{e^nqv < infty}
\int_{B_1} e^{ntv(x)} \diff x < \infty. 
\end{equation} 
If $\beta > -1$,we thus have $e^{nsu} \in L^1(B_1)$ by Hölder, for $s > 1$ small enough that $\beta s > -1$.  

We may thus assume in what follows that $\beta \leq -1$ and observe that $v \geq 0$ on $B_1$, because 
\[ \ln \left(\frac{1+|y|}{|x-y|}\right) \geq 0 \qquad \text{ for every } \quad x \in B_1, \, y \in \R^n. \]
By integrability of $e^{nu}$, boundedness of $p$ and Lemma \ref{lemma v lower bound} we thus have
\begin{align*}
\infty &> \int_{\R^n} e^{nu} \geq \int_{B_1} |x|^{n\beta} e^{nq\left(\frac{x}{|x|^2}\right)} \diff x  = \int_{\R^n \setminus B_1} |x|^{-n(\beta+2)} e^{nq(x)} \diff x . 
\end{align*} 
By Conjecture \ref{conjecture polynomials}, there is $\eps > 0$ such that
\[ \infty >   \int_{\R^n \setminus B_1} |x|^{-n(\beta - \eps +2)} e^{nq(x)} \diff x = \int_{B_1} |x|^{n(\beta-\eps)} e^{-nq(\frac{x}{|x|^2})} \diff x. \]
Using this together with \eqref{e^nqv < infty} and upper-boundedness of $p$ and $q$, we easily infer from \eqref{e^nqu decomposition} and Hölder that $e^{n s u} \in L^1(B_1)$ for every $1 < s < \frac{\beta-\epsilon}{\beta}$. This completes the proof of \eqref{e^nsu}, and thus of Proposition \ref{prop v asymptotics}.  
\end{proof}

We are unfortunately not able to prove Conjecture \ref{conjecture polynomials} for general polynomials $q$. But we have the following partial result. 

\begin{proposition}
\label{proposition conjecture}
Suppose either that $\deg q \leq 3$, or that 
\begin{equation}
\label{assumption polynomial}
\begin{cases} & \text{ there is $k \in [0,n]$ such that $q(x)$ does not depend on $z := (x_{k+1}, ..., x_n)$} \\
& \text{ and $q(y) \to - \infty$ as $y := (x_1, ..., x_k) \to \infty$.} \end{cases} 
\end{equation}
Then Conjecture \ref{conjecture polynomials} holds. 
\end{proposition}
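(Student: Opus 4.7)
The plan is to reduce both parts to a unified statement: after a suitable orthogonal splitting $\R^n = V \oplus V^\perp$ with $k := \dim V \in \{1, \ldots, n\}$, the polynomial $q$ depends only on the $V$-component $y$ and satisfies the polynomial decay estimate $q(y) \leq -c|y|^\alpha$ for some $c, \alpha > 0$ and $|y| \geq R$. Given this, the conclusion follows by Fubini: writing $x = (y, z) \in \R^k \oplus \R^{n-k}$ and carrying out the inner $z$-integral yields
\[
\int_{\R^n \setminus B_1} |x|^{\sigma+\eps} e^{q(y)} \diff x = \int_{\R^k} e^{q(y)} J_\eps(y) \diff y,
\]
where $J_\eps(y) \lesssim (1+|y|)^{\sigma+\eps+(n-k)}$. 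When $k < n$, integrability of the $z$-integral at $\eps = 0$ forces $\sigma + (n-k) < 0$, so the same holds for small $\eps > 0$, making $J_\eps$ at most polynomially growing; when $k = n$ the $z$-integral is vacuous and $J_\eps(y) = |y|^{\sigma+\eps} \1_{|y|>1}$. In either case, the super-polynomial decay of $e^{q(y)}$ implied by $q(y) \leq -c|y|^\alpha$ absorbs the factor $J_\eps$, yielding finiteness.

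For case (a) ($\deg q \leq 3$), the reduction is elementary: decompose $q = q_3 + q_2 + q_1 + q_0$ into homogeneous parts. By the odd symmetry $q_3(-\omega) = -q_3(\omega)$, non-vanishing of $q_3$ would produce an open cone on which $q \to +\infty$, contradicting the original integrability; hence $q_3 \equiv 0$. Similarly, $q_2$ must be negative semidefinite (else exponential blow-up on a positive-eigenvector ray), and the linear form $q_1$ must vanish on $V := \ker q_2$ (else exponential growth along a null direction dooms the $V$-integral). Then $q$ depends only on the $V^\perp$-component $v$, on which $q_2$ is negative definite, so $q(v) \leq -c|v|^2 + O(|v|)$ and the decay holds with $\alpha = 2$. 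The degenerate cases $\deg q \in \{0,1\}$ are vacuous, since the hypothesis $\sigma > -n$ then contradicts integrability.

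For case (b), the splitting is given, so one only needs to establish the polynomial decay rate. (The edge case $k = 0$ is vacuous: $q$ is constant, contradicting $\sigma > -n$.) For $k \geq 1$, consider
\[
f(r) := \sup_{|y| = r} q(y), \qquad r \geq 0.
\]
The set $\{(r, s) \in \R_{>0} \times \R : \exists y \in \R^k, \, |y| = r, \, q(y) = s\}$ is semi-algebraic, and $f$ is the upper envelope of its projection onto the first coordinate, hence a semi-algebraic function of $r$. Such functions admit Puiseux-type asymptotic expansions at infinity, so $f(r) = \gamma r^\alpha(1 + o(1))$ for some $\gamma \in \R$, $\alpha \in \mathbb{Q}$. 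Since $q(y) \to -\infty$ gives $f(r) \to -\infty$, necessarily $\gamma < 0$ and $\alpha > 0$, whence $q(y) \leq f(|y|) \leq -c|y|^\alpha$ for $|y|$ large.

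The main technical input is the Puiseux-type asymptotic expansion at infinity for semi-algebraic functions in case (b); this will replace any degree-by-degree analysis of $q$, which becomes delicate once the leading homogeneous part $q_d$ has nontrivial zeros that must be "resolved" by lower-order terms. Case (a) avoids this difficulty entirely because the low degree constraint forces $q_2$ negative definite, so $\alpha$ is simply $2$. Everything else — the reduction of $q_3$ and $q_1$, and the Fubini conclusion — is routine.
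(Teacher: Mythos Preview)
Your proposal is correct and follows essentially the same route as the paper: both arguments reduce to the same Fubini computation in the splitting $\R^n = \R^k \times \R^{n-k}$, both use the polynomial lower bound $-q(y) \gtrsim |y|^\alpha$ on the active variables (the paper cites Gorin's theorem, you derive the same estimate via the Puiseux expansion of the semi-algebraic function $f(r)=\sup_{|y|=r}q(y)$), and both handle $\deg q\le 3$ by observing that integrability forces $q$ to be upper bounded, hence quadratic, and then diagonalizing $q_2$ to land back in case (b). The only cosmetic difference is that the paper packages the Fubini step as the sharp equivalence $\int_{\R^n\setminus B_1}|x|^\sigma e^{q}\,dx<\infty \iff \sigma<k-n$, whereas you extract just the two implications needed; and be aware that your symbol $V$ switches meaning between the ``unified statement'' and the case-(a) paragraph (where $V:=\ker q_2$ is the \emph{inactive} subspace), which is harmless but worth cleaning up.
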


The choices $k= 0$ and $k=n$ in \eqref{assumption polynomial} cover the cases of polynomials which are constant, respectively tend to $-\infty$.

On the other hand, an example of a polynomial $q$ which is not covered by the assumption of Proposition \ref{proposition conjecture} is given by $q(x) =-x_1^2 x_2^2 - x_2^2 x_3^2 - ... - x_{n-1}^2 x_n^2$. This polynomial is bounded above and depends non-trivially on all variables, but it does not tend to $-\infty$ as $|x| \to \infty$ because it vanishes on every coordinate axis. 

We point out that if $n = 3,4$, then the asymptotic polynomial $q$ of any solution $u$ to \eqref{liouville singular} satisfies $\deg q \leq 2$ by Proposition \ref{prop_u_decomposition}. Therefore the alternative approach presented in this subsection yields a complete proof of Proposition \ref{prop v asymptotics} in dimensions $n = 3,4$.

\begin{proof}
[Proof of Proposition \ref{proposition conjecture}]

We claim that under assumption \eqref{assumption polynomial}, the integral 
$\int_{\R^n \setminus B_1} |x|^\sigma e^{q(x)} \diff x$ is finite if and only if
\begin{equation} 
\label{alpha bound}
\sigma < -n+k. 
\end{equation}
Clearly, this implies Conjecture \eqref{conjecture polynomials}. 

Let us first prove the 'only if' part. Writing $B_1^k$ for the unit ball of $\R^k$, we observe that $\R^k \setminus B_1^k \times \R^{n-k} \setminus B_1^{n-k} \subset \R^n \setminus B_1$. Therefore finiteness of the integral implies 
\[ \infty > \int_{\R^n \setminus B_1} |x|^{\sigma} e^{q(x)} \diff x \geq \int_{\R^k \setminus B^k_1} e^{q(y)} \left(  \int_{\R^{n-k} \setminus B_{n-k}} (|y|^2 + |z|^2)^{\sigma/2} \diff z \right) \diff y \]
Thus for a.e. $y \in \R^k \setminus B_1^k$, we must have 
\[ \int_{\R^{n-k} \setminus B_{n-k}} (|y|^2 + |z|^2)^{\sigma/2} \diff z < \infty. \]
which yields \eqref{alpha bound}. 

Conversely, suppose that \eqref{alpha bound} holds. Then 
\begin{align*}
\int_{\R^n \setminus B_1} |x|^\sigma e^{q(x)} \diff x & = \int_{\R^k \setminus B_1^k} e^{q(y)} \left( \int_{\R^{n-k}} (|y|^2 + |z|^2)^{\sigma/2)} \diff z \right) \diff y \\
& \qquad + \int_{B_1^k}  e^{q(y)} \left( \int_{\R^{n-k} \setminus B_{\sqrt{1 - |y|^2}}^{n-k}} (|y|^2 + |z|^2)^{\sigma/2} \diff z \right) \diff y.
\end{align*}
The first summand equals
\begin{align*}
 \int_{\R^k \setminus B_1^k}  e^{q(y)} |y|^{(n-k) +\sigma} \left( \int_{\R^{n-k}} (1 + |z|^2)^{\sigma/2} \diff z \right) \diff y < \infty,
\end{align*} 
since by \eqref{assumption polynomial}, \cite[Theorem 3.1]{Gorin} yields $q(y) \lesssim -|y|^s + C$ for some $s > 0$ as $|y|\to \infty$ on $\R^k$ and since by \eqref{alpha bound},
\[ \int_{\R^{n-k}} (1 + |z|^2)^{\sigma/2} \diff z  < \infty. \]

To bound the second summand, we need to control the $\diff z$-integral as $|y| \to 0$. We compute, for $|y|$ small enough,
\begin{align*}
& \qquad \int_{\R^{n-k} \setminus B_{\sqrt{1 - |y|^2}}^{n-k}} (|y|^2 + |z|^2)^{\sigma/2} \diff z \\
& \leq |y|^{n-k+\sigma} \int_{\R^{n-k} \setminus B_{\frac{\sqrt{1 - |y|^2}}{|y|^2}}} |z|^{\sigma/2} \diff z \\
&\leq  |\mathbb S^{n-k-1}| |y|^{n - k + \sigma} \left( \frac{2}{|y|} \right)^{n-k + \sigma}  \lesssim 1
\end{align*}
as $|y| \to 0$. Hence the second summand is bounded as well, which completes the proof of the claimed equivalence. 

Finally, assume that $\deg q \leq 3$. Arguing as in Step 3 of the proof of Proposition \ref{prop_u_decomposition}, we find that if $\int_{\R^n \setminus B_1} |x|^\sigma e^{nq(x)} \diff x < \infty$, then $q$ must be upper-bounded, and hence have even degree.  If $\deg q = 0$, assumption \eqref{assumption polynomial} holds with $k = 0$. If $\deg q = 2$, then $q$ can be written, up to an orthogonal coordinate transformation, as 
\[ q(x) = -\sum_{i=1}^n a_i(x_i - x_0)^2 + c_0, \]
for some $a_i  \geq 0$. Thus assumption \eqref{assumption polynomial} holds as well, with $k$ being the number of non-zero coefficients $a_i$. 
\end{proof}

\section{The setting on the sphere $\mathbb S^n$}
\label{section sphere}

To facilitate the variational argument leading to Theorem \ref{theorem existence optimal}, we shall pass to an equivalent problem on the sphere $\mathbb S^n = \{ x \in \R^{n+1} \, : \, |x| = 1 \}$. In this brief section, we introduce the necessary preliminaries for this. 

Let us denote $\mathcal S :\R^n \to \Sph \subset \R^{n+1}$ the inverse stereographic projection, i.e.
$$
\mathcal S_j(x) = \frac{2x_j}{1+|x|^2} \,,\ j=1,\ldots, n\,,
\qquad
\mathcal S_{n+1}(x) = \frac{1-|x|^2}{1+|x|^2} \,.
$$
We denote its Jacobian in a point $x \in \R^n$ by $J_{\mathcal S}(x) := (2/(1 + |x|^2))^n$ and that of $\mathcal S^{-1}$ in a point $\eta \in \Sph$ by $J_{\mathcal S^{-1}}(\eta)$. Moreover, we call a function $v$ on $\Sph$ \emph{radial} if $v(\mathcal S(x))$ is a radial function on $\R^n$.

We now introduce the differential operators $P_{2s}$ mentioned in the introduction. For this, recall that every $u \in L^2(\mathbb S^n)$ can be uniquely written as 
\begin{equation}
\label{spherical harm expansion} 
u = \sum_{l=0}^\infty \sum_{m = 1}^{N_l} u_{lm} Y_{lm} \qquad \text{ for some } u_{lm} \in \R. 
\end{equation}
Here $Y_{lm}$ are a fixed $L^2(\mathbb S^n)$-orthonormal basis of spherical harmonics, i.e. eigenfunctions of the Laplace-Beltrami operator $(-\Delta)_{\mathbb S^n}$ satisfying $(-\Delta)_{\mathbb S^n} Y_{lm} = \lambda_l Y_{lm}$, with $\lambda_l = l (l + n -1)$ being the $l$-th eigenvalue of multiplicity $N_l \in \N$. 
For $s > 0$, we introduce the $2s$-th order Paneitz operator $P_{2s}$ by defining 
\begin{equation}
\label{paneitz}
 P_{2s} u := \sum_{l,m} \frac{\Gamma(l+\frac n2 + s)}{\Gamma(l+\frac n2 - s)} u_{lm} Y_{lm}
\end{equation}
for every $u \in L^2(\mathbb S^n)$ such that the right side converges in $L^2(\mathbb S^n)$. Here and below, the sum is the same as in \eqref{spherical harm expansion} and we interpret $\Gamma(-n)^{-1} = 0$ for $n \in \N_0$. 
A conventional alternative expression for $P_{2s}$ in terms of $(-\Delta)_{\mathbb S^n}$ is 
\begin{align}
	\label{eq:opas}
	A_{2s} = \frac{\Gamma(B+\tfrac12 + s)}{\Gamma(B+\tfrac12 - s)},
	\qquad\text{for}\qquad
	B = \sqrt{-\Delta_{\Sph} + \tfrac{(n-1)^2}{4}} \,.
\end{align}
It is clear from \eqref{paneitz} that for $s \in (0, n/2]$, the operator $P_{2s}$ is nonnegative. Thus we may define its square root by 
\begin{equation}
\label{Pn^12}
P_{2s}^{1/2} u := \sum_{l,m} \left(\frac{\Gamma(l+\frac n2 + s)}{\Gamma(l+\frac n2 - s)} \right)^{1/2} u_{lm} Y_{lm} 
\end{equation}
for every $u$ such that the right side converges in $L^2(\Sph)$.

A crucial feature of $P_{2s}$ is its conformal transformation property (see e.g. \cite[eq. (4.11)]{Gonzalez})
\begin{equation}
\label{conf trafo Pn} 
(-\Delta)^{s} (J_{\mathcal S}^\frac{n-2s}{2n} u \circ \mathcal S) = J_{\mathcal S}(x)^\frac{n+2s}{2n} \left((P_{2s} u) \circ \mathcal S \right) (x) \qquad \text{ for every } u \in C^\infty(\mathbb S^n).  
\end{equation}
In particular, for $s = n/2$, writing $w =  u \circ \mathcal S$, \eqref{conf trafo Pn} implies 
\begin{equation}
\label{conf trafo Pn int}
\int_{\mathbb S^n} |P_n^{1/2} u|^2 = \int_{\mathbb S^n} u P_n u = \int_{\R^n} w \left((P_n u) \circ \mathcal S\right) J_{\mathcal S} = \int_{\R^n} w (-\Delta)^{n/2} w. 
\end{equation}
We finish by defining the Sobolev space 
\begin{equation}
\label{Hn2} 
H^{n/2}(\mathbb S^n) := \left \{ u \in L^2(\mathbb S^n) \, : \, \|u\|_{H^{n/2}(\mathbb S^n)}^2 := \|u\|^2_{L^2(\mathbb S^n)} + \|P_n^{1/2} u \|^2_{L^2(\mathbb S^n)}  < \infty \right \} 
\end{equation}
which will play a role in the proof of Theorem \ref{theorem existence optimal}. By \eqref{Pn^12} and Stirling's formula, the norm $ \|u\|_{H^{n/2}(\mathbb S^n)}$ is equivalent to the standard $H^{n/2}(\Sph)$ norm  $(\sum_{l,m} l^n u_{lm}^2)^{1/2}$. We denote by $H^{n/2}_\text{rad}(\Sph)$ the subspace of $H^{n/2}(\mathbb S^n)$ consisting of radial functions.

\section{Proof of Theorem \ref{theorem existence optimal}}
\label{section existence}

In this section, we prove Theorem \ref{theorem existence optimal} using a variational argument, which, similarly to earlier works like \cite{ChCh} and \cite{Hyder-existence}, it is convenient to carry out on the $n$-dimensional sphere $\mathbb S^n$. 

We divide the proof into several steps.

%\begin{proposition}
%\label{proposition mt proof version}
%Let $s > 0$, $b \in \R$ and $\gamma \in \R$. Then there is $C > 0$ such that for all $w \in H^{n/2}(\R^n)$ radially symmetric with $\|(-\Delta)^{n/4} w\|_2 \leq 1$,  
%\[ \int_{\R^n} \exp( \gamma |w|^2) |x|^{b} e^{-|x|^{-s}} \diff x \leq C. \]
%\end{proposition}

\begin{proof}
[Proof of Theorem \ref{theorem existence optimal}]

\textit{Step 1. Reduction to a variational equation. }

Let $\Lambda > 0$, $\beta \in \R$ and $p, q$ (radial polynomials of degree at most $n-1$ tending to $-\infty$) be given as in the statement of Theorem \ref{theorem existence optimal}. 

Moreover, fix a function $u_0 \in C^\infty(\R^n)$ such that $u_0(x) = - \ln (|x|)$ for all $|x| \geq 1$ and denote $\varphi_0 := (-\Delta)^{-n/2} u_0$. 

Let 
\[ K(x) := |x|^{n\beta} \exp\left( n \left( p(x) + q\left(\frac{x}{|x|^2}\right) + \frac{\Lambda}{\gamma_n} u_0\right) \right). \]
For $w \in H^{n/2}(\R^n)$, let $c_w \in \R$ be the unique number such that 
\[ \int_{\R^n} K e^{n (w + c_w)} = \Lambda, \qquad \text{ i.e. } \quad e^{n c_w} = \frac{\Lambda}{\int_{\R^n} K e^{nw}}. \]

Equipped with these notations, we claim that to prove Theorem \ref{theorem existence optimal}, it suffices to find $w \in H^{n/2}(\R^n)$ with $w(x) = o\left(\ln (|x|)\right)$ as $|x| \to \infty$ such that 
\begin{equation}
\label{equation w} 
(-\Delta)^{n/2} w = K e^{n(w + c_w)} - \frac{\Lambda}{\gamma_n} \varphi_0. 
\end{equation}
Indeed, then 
\[ u(x):= w(x) + p(x) + q\left (\frac{x}{|x|^2}\right)+ \beta \ln (|x|) + \frac{\Lambda}{\gamma_n} u_0(x) + c_w \]
fulfills 
\[ (-\Delta)^{n/2} u(x) = (-\Delta)^{n/2} w(x) + \frac{\Lambda}{\gamma_n} \varphi_0 = K e^{n(w + c_w)} = e^{nu} \]
as well as 
\[ \int_{\R^n} e^{nu} = \int_{\R^n} K e^{n(w + c_w)} = \Lambda. \] 
%Solutions to \eqref{equation w} are given by critical points of the functional 
%\begin{equation}
%\label{definition J}
%J[w]:= \frac{1}{2} \| (-\Delta)^{n/4} w\|_2^2 + \frac{\Lambda}{\gamma_n} \int_{\R^n} \varphi_0 w - \frac{\Lambda}{n} \ln \left( \int_{\R^n} K e^{nw} \right). 
%\end{equation}

\textit{Step 2. Transforming to the sphere. }

Set
\[ Q := J_{\mathcal S^{-1}} K \circ \mathcal S^{-1}, \qquad \psi_0 := J_{\mathcal S^{-1}} \varphi_0 \circ \mathcal S^{-1} \]
so that $\int_{\R^n} K e^{nw} = \int_{\mathbb S^n} Q e^{nu}$. By \eqref{conf trafo Pn}, equation \eqref{equation w} is then equivalent to 
\begin{equation}
\label{equation u sphere}
P_n u = -\frac{\Lambda}{\gamma_n} \psi_0 + \Lambda \frac{Q e^{nu}}{\int_{\mathbb S^n} Q e^{nu}}. 
\end{equation}

Solutions to \eqref{equation u sphere} are given by critical points of the functional 
\[ I[u]:= \frac 12  \| P_n^{1/2} u\|^2 + \frac{\Lambda}{\gamma_n} \int_{\mathbb S^n} \psi_0 u - \frac{\Lambda}{n} \ln \left(\int_{\mathbb S^n} Q e^{nu} \right). \]

We will find such a critical point by solving the variational problem
\begin{equation}
\label{var prob sphere} 
 \inf \left\{ I[u] \, : \, u \in H^{n/2}_\text{rad}(\mathbb S^n) \right\}.
\end{equation}
Notice that the infimum is taken over radial functions only.

We claim that any minimizer $u_0$ of \eqref{var prob sphere} is a critical point of $I[u]$, i.e. for any test function $\phi \in H^{n/2}(\mathbb S^n)$, it satisfies 
\begin{equation}
\label{el eq u0 sphere}
\int_{\R^n} P_n^{1/2} u_0 P_n^{1/2} \phi + \frac{\Lambda}{\gamma_n} \int_{\mathbb S^n} \psi_0 \phi -  \frac{\Lambda}{\int_\Sph Q e^{nu_0}} \int_\Sph Q e^{nu_0} \phi = 0.
\end{equation}
If $\phi$ is itself radial, this is clear because $u_0$ is a minimizer over radial functions. The validity of \eqref{el eq u0 sphere} for general $\phi \in H^{n/2}(\Sph)$ follows since $u_0$, $\psi_0$ and $q$ are radial and because $P_n$ preserves radial functions.

\textit{Step 3. Minimizing $I[u]$. }

As explained in the previous step, it remains to prove that a minimizer for the problem \eqref{var prob sphere} exists.  We shall give the proof using the direct method of the calculus of variations, making crucial use of the improved Moser-Trudinger inequality  in the form of Theorem \ref{theorem mt differential intro}. 

Indeed, we claim that there is a constant $C >0$ only depending on $n$, $q$ and $\Lambda$ such that 
\begin{equation}
\label{I lower bound} 
I[u] \geq \frac 14 \|P_n^{1/2} u\|_2^2 - C
\end{equation}
for all $u \in H^{n/2}_\text{rad}(\mathbb S^n)$. To prove \eqref{I lower bound}, first note that $P_n^{1/2} c = 0$ and 
\begin{equation}
\label{I inv constants}
I[u+c] = I[u], \qquad \text{ for any constant  } \quad c \in \R,
\end{equation}
as a consequence of the fact that $\int_{\mathbb{S}^n} \psi_0 = \int_{\R^n} \varphi_0 = \gamma_n$, see \cite[Lemma 2.3]{Hyder-existence}. Thus we may assume $\int_{\mathbb S^n} u = 0$. For such $u$, the Poincaré-type inequality 
\begin{equation}
\label{poincare}
\|u\|_2^2 \leq C \|P_n^{1/2} u\|_2^2, 
\end{equation}
holds as a consequence of the definition of $P_n^{1/2}$ in \eqref{Pn^12}. Thus we can bound the subcritical term by
\[ \frac{\Lambda}{\gamma_n} \left| \int_{\mathbb S^n} \psi_0 u \right| \leq C \|\psi_0\|_2 \|P_n^{1/2} u\|_2 \leq \frac14 \|P_n^{1/2} u\|_2^2 + C \|\psi_0\|_2^2. \]

Next, we bound the critical term $\ln\left( \int_{\mathbb S^n} Q e^{nu}\right) $ as follows. For any $\gamma >0$, 
$nu \leq \frac{n^2}{4 \gamma} \|P_n^{1/2} u\|_2^2 + \gamma \frac{u^2}{ \|P_n^{1/2} u \|^2_2}$. Hence 
\begin{align*}
\int_{\mathbb S^n} Q e^{nu} \leq \exp \left( \frac{n^2}{4 \gamma} \|P_n^{1/2} u\|_2^2 \right) \int_{\mathbb S^n} Q e^{\gamma \frac{u^2}{ \|P_n^{1/2} u \|_2^2}}. 
\end{align*}
Thus, by Theorem \ref{theorem mt differential intro} with $s = n/2$, since $Q(\eta) \leq C\exp{-d(\eta,S)^{-\sigma}} d(\eta,N)^{\beta}$ for any $\beta>0$, we have for any $\gamma > 0$ that
\[ \frac{\Lambda}{n} \ln \left(\int_{\mathbb S^n} Q e^{nu}\right) \leq C + \frac{n \Lambda}{4 \gamma} \|P_n^{1/2} u \|_2^2.  \]
Choosing $\gamma > 0$ so large that $\frac{n \Lambda}{4 \gamma} < \frac{1}{4}$ and combining the above estimates, 
\eqref{I lower bound} follows. 

We can now prove that \eqref{var prob sphere} admits a minimizer. Let $(u_k)_{k \in \N} \subset H^{n/2}_\text{rad}(\mathbb S^n)$ be a minimizing sequence for \eqref{var prob sphere}. By \eqref{I inv constants}, we may again assume that $\int_{\mathbb S^n} u_k = 0$ for all $k \in \N$. By \eqref{I lower bound}, $\|P_n^{1/2} u_k\|_2$ is uniformly bounded. By \eqref{poincare}, so is $\|u_k\|_2$, and hence $(u_k)$ is uniformly bounded in $H^{n/2}_\text{rad}(\mathbb S^n)$. Up to extracting a subsequence, we may therefore assume $u_n \rightharpoonup u_0$ for some $u_0 \in H^{n/2}_\text{rad}(\mathbb S^n)$. 

Then 
\[ \|u_0\|_{H^{n/2}(\mathbb S^n)} \leq \liminf_{k\to \infty} \|u_k\|_{H^{n/2}(\mathbb S^n)}. \]
Moreover, since $u \mapsto u$ and $u \mapsto e^u$ defines compact embeddings from $H^{n/2}(\mathbb S^n)$ into $L^2(\mathbb S^n)$ (for the latter property see \cite[Proposition 7]{JiMaMaXi}), 
\[ \|u_0\|_2 =\lim_{k\to \infty} \|u_k\|_2, \quad \int_{\mathbb S^n} \psi_0 u_0 = \lim_{k\to \infty} \int_{\mathbb S^n} \psi_0 u_k \]
and 
\[   \ln \left(\int_{\mathbb S^n} Q e^{nu_0} \right) = \lim_{k \to \infty} \ln \left(\int_{\mathbb S^n} Q e^{nu_k} \right). \]
In view of the definition of $\|u\|_{H^{n/2}}$ in \eqref{Hn2}, this implies 
\[ \|P_n^{1/2} u_0\|_2 \leq \liminf_{k\to \infty}\|P_n^{1/2} u_k\|_2 \]
and thus $u_0$ is a minimizer for $I_0$.

\textit{Step 4. Regularity } 

The argument that provides regularity is identical to \cite{Hyder-existence}. Somewhat more precisely, by using the regularity theory for $P_n$ that can easily be deduced from its representation on spherical harmonics \cite[Lemmas 2.5 and 2.6]{Hyder-existence} together with the fact that $\psi_0 \in C^{2n+1}(\mathbb S^n)$, one proves $u_0 \in C^{2n+1}(\mathbb S^n)$. Since $u_0$ is continuous at the south pole, the function $w_0 := u_0 \circ \mathcal S$ has a limit at infinity, in particular the condition $w(x) = o\left(\ln (|x|)\right)$ as $|x| \to \infty$ is satisfied. Moreover, $w_0$ fulfills 
\[ (-\Delta)^{n/2} w_0 = K e^{n(w_0 + c_{w_0})} - \frac{\Lambda}{\gamma_n} \varphi_0. \]
Since $\varphi_0 \in C^\infty(\R^n)$, bootstrapping gives $w_0 \in C^\infty(\R^n)$. Thus we have found a solution $w = w_0$ as described in Step 1 and the proof is complete. 
\end{proof}

\begin{proof}
[Proof of Theorem \ref{theorem existence optimal}(b)]
The proof of part (b) is identical to the proof of (a), except for the use of Theorem \ref{theorem mt differential intro}. Indeed, in this case we have $Q(\eta) \leq C \text{dist}(\eta, N)^{n\beta} e^{-\text{dist}(\eta, S)^{-\sigma}}$ for some $\sigma > 0$ and some given $\beta > -1$. Theorem \ref{theorem mt differential intro} thus yields 
\[ \frac{\Lambda}{n} \ln \left(\int_{\mathbb S^n} Q e^{nu}\right) \leq C + \frac{n \Lambda}{4 \gamma} \|P_n^{1/2} u \|^2,  \]
with
\[ \gamma = \frac{n + \beta}{|\mathbb S^{n-1}|} K_{n, \frac{n}{2}}^{-2}. \]
Since $\Lambda < (1 + \beta) \Lambda_1$, recalling the numerical values of $\Lambda_1$ and $K_{n,n/2}$ given in \eqref{Lambda1 definition} and \eqref{K definition}, a direct computation shows that 
\[ \delta :=  \frac{1}{2} - \frac{n \Lambda}{4 \gamma} > 0. \]
The rest of the proof proceeds as above. 
\end{proof}

\section{Proof of Theorems \ref{theorem mt differential intro} and \ref{thm mt local}}
\label{section moser trudinger}

We now turn to prove the Moser-Trudinger inequalities from Theorems \ref{theorem mt differential intro} and \ref{thm mt local}. As briefly explained in the introduction, following Adams' classical paper \cite{Adams} we shall derive them from a dual inequality on convolution-type operators which we introduce now in detail.

Let $s \in (0, n)$. For $f \in L^{\frac{n}{s}}(\Sph)$, we denote  
\begin{equation}
\label{T definition} 
Tf(\eta) := \int_\Sph k(\eta, \xi) f(\xi) \diff \sigma(\xi).  
\end{equation}
Here we assume that the kernel $k(\eta, \xi)$ satisfies
\begin{equation}
\label{expansion K}
k(\eta, \xi) = d(\eta, \xi)^{-n+s} \left(1 + \mathcal O\left(d(\eta, \xi)\right)^\alpha \right). 
\end{equation} 
for some fixed constant $\alpha > 0$. (We denote by $\mathcal O( d(\eta, \xi)^\alpha)$ a quantity $h(\eta, \xi)$ with the property that $|h(\eta, \xi)| d(\eta, \xi)^{-\alpha}$ is bounded uniformly in $\eta, \xi \in \Sph$ with $\eta \neq \xi$.) 
 
Moreover, we assume that $k(\eta, \xi)$ only depends on the geodesic distance $d(\eta, \xi)$. As a consequence, by change of variables, $Tf$ is radial (in the sense given in Section \ref{section sphere}) if $f$ is. 

We are going to prove the following weighted Moser-Trudinger-Adams inequality of convolution type, valid for radial functions. 

\begin{theorem}
\label{theorem mt convolution}
 Let $Q$ be as in \eqref{Q decay} and let $T$ be given by \eqref{T definition}, for some $k$ satisfying \eqref{expansion K}. Then there is $C > 0$ such that
\[ \int_\Sph \exp \left( \frac{n + \beta}{|\sph|}  |Tf|^\frac{n}{n-s} \right) Q(\eta)  \leq C \]
uniformly for $f \in L^{\frac{n}{s}}_\text{rad}(\Sph)$ with $\|f\|_{L^{\frac{n}{s}}(\Sph)} \leq 1$. Moreover, the constant $ \frac{n+\beta}{|\sph|}$ is sharp, in the sense that if it is replaced by $\gamma >  \frac{n+\beta}{|\sph|}$, then the constant $C$ is no longer uniform in $f$. 
\end{theorem}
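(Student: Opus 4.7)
The strategy is Adams' classical one, adapted to the weighted radial setting: reduce the problem to a one-dimensional convolution-type inequality and conclude via an Adams--Garsia lemma. The new ingredient forced upon us by the weight is that, when $\beta > 0$, the usual symmetric-decreasing rearrangement of $Tf$ is unavailable (because $Q$ is not radially decreasing), so I would exploit radial symmetry of $f$ directly, at the price of introducing perturbative error terms which must be absorbed by the improved Adams--Garsia lemma (Lemma~\ref{lemma adams}).

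First, parametrize $\eta \in \Sph$ by the polar angle $\theta = d(\eta, N) \in (0, \pi)$ and, since $f$ is radial, write $\xi = \xi(\phi, \omega)$ with $\phi = d(\xi, N)$ and $\omega \in \sph$. Fubini then gives
\[
Tf(\theta) \;=\; \int_0^\pi \tilde k(\theta, \phi)\, f(\phi)\, \sin^{n-1}\!\phi \, \diff\phi, \qquad \tilde k(\theta,\phi) := \int_{\sph} k(\eta(\theta), \xi(\phi,\omega))\, \diff\omega.
\]
A direct computation using the local Euclidean approximation to $d(\eta,\xi)$ near the polar axis, together with \eqref{expansion K}, yields the asymptotic $\tilde k(\theta, \phi) = c_{n,s} |\theta-\phi|^{s-1} + \mathcal O(|\theta-\phi|^{s-1+\alpha})$ plus off-axis corrections. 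Meanwhile the super-exponential decay of $Q$ near the south pole ensures that $\theta \in (\pi/2, \pi)$ contributes only a bounded additive term to the final integral, so one may restrict to $\theta \in (0, \pi/2]$.

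On that region $Q(\eta)\, \diff\sigma(\eta) \asymp |\sph|\, \theta^{n+\beta-1}\, \diff\theta$. The substitution $\theta = (|\sph|/(n+\beta))^{1/(n+\beta)} e^{-t/(n+\beta)}$ converts this weighted measure into $e^{-t}\, \diff t$ on a half-line $(T_\ast, \infty)$, and the analogous substitution in $\phi$ (absorbing the Jacobian into a new density $g$ with $\|g\|_{L^{n/s}} \leq \|f\|_{L^{n/s}}$) recasts the desired bound as the 1D Adams-type statement
\[
\int_{T_\ast}^\infty \exp\bigl( |\mathcal T g(t)|^{n/(n-s)} - t\bigr)\, \diff t \;\leq\; C, \qquad \|g\|_{L^{n/s}((T_\ast, \infty))} \leq 1,
\]
where $\mathcal T$ is a convolution operator on the half-line whose kernel agrees at principal order with Adams' model kernel, up to perturbative corrections arising from (i) the $\mathcal O(d^\alpha)$ remainder in \eqref{expansion K}, (ii) the discrepancy between $\sin\theta$ and $\theta$, and (iii) the difference between geodesic distance and $|\theta - \phi|$ off the polar axis. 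The scaling factors $|\sph|$ and $n+\beta$ introduced by the change of variable are exactly what produce the sharp constant $(n+\beta)/|\sph|$ in the original inequality.

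The final step is to apply Lemma~\ref{lemma adams}, and this is where the main obstacle lies: the classical Adams--Garsia lemma is not robust enough to tolerate the perturbations (i)--(iii), which (as the introduction explains) are of a structurally different character from the O'Neil-type errors arising in the unweighted setting, and the whole purpose of the improved lemma is precisely to absorb them without loss in the sharp constant. Once this is done, sharpness is demonstrated in the usual Moser fashion by testing against a radial sequence $f_\eps \propto \mathbf{1}_{\{\phi < \eps\}}$ concentrating at the north pole: $Tf_\eps$ is computed explicitly from the leading $d^{-n+s}$ behavior of the kernel, and the weight $d(\eta, N)^\beta$ shifts the effective radial dimension from $n$ to $n+\beta$, which is exactly the modification of Adams' original sharpness computation that produces the constant $(n+\beta)/|\sph|$.
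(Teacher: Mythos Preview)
Your overall strategy matches the paper's: reduce to a one-dimensional statement in radial-logarithmic variables and apply the improved Adams--Garsia lemma (Lemma~\ref{lemma adams}). The paper differs cosmetically in that it first projects to $\R^n$ by stereographic projection rather than working in geodesic polar coordinates on $\Sph$, and it uses the substitution $|x|=e^{-t}$ (with $n+\beta$ appearing as the parameter $\tilde\alpha$ in Lemma~\ref{lemma adams}) rather than your $\beta$-dependent substitution $\theta\sim e^{-t/(n+\beta)}$; these choices are equivalent up to rescaling.

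However, two concrete points in your execution would fail as written.

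\textbf{The form of the radial kernel $\tilde k(\theta,\phi)$.} Your claimed asymptotic $\tilde k(\theta,\phi)=c_{n,s}|\theta-\phi|^{s-1}+\mathcal O(|\theta-\phi|^{s-1+\alpha})$ is incorrect: averaging $d(\eta,\xi)^{-n+s}$ over the latitude sphere does not produce a one-dimensional Riesz kernel. In local Euclidean coordinates near the north pole one has, writing $R=\min(\theta,\phi)/\max(\theta,\phi)$,
\[
\tilde k(\theta,\phi)\;\approx\;|\sph|\,\max(\theta,\phi)^{-n+s}\,g_{n-s}(R),
\qquad g_\alpha(R):=\frac{1}{|\sph|}\int_{\sph}|e_1-R\omega|^{-\alpha}\,\diff\sigma(\omega).
\]
For $s>1$ this is bounded and non-singular as $\theta\to\phi$; for $s<1$ it behaves like $C\,\max(\theta,\phi)^{-n+1}|\theta-\phi|^{s-1}$. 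In either case the factor $\max(\theta,\phi)^{-n+s}$ (respectively $\max(\theta,\phi)^{-n+1}$) is essential and cannot be relegated to ``off-axis corrections.'' This is exactly the function $g_{n-s}$ analyzed in Lemma~\ref{lemma mean value}; the proof hinges on the expansion $g_{n-s}(R)=1+\mathcal O(R)$ near $R=0$, which is what produces the kernel bound $a(w,t)\leq 1+Ce^{w-t}$ on $[0,t]$ feeding into Lemma~\ref{lemma adams}. With the kernel you wrote down, the constants would not match and the passage to Lemma~\ref{lemma adams} would not go through.

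\textbf{The sharpness test.} The family $f_\eps\propto\mathbf 1_{\{\phi<\eps\}}$ does not produce logarithmic blow-up of $Tf_\eps$: for such $f_\eps$ (normalized in $L^{n/s}$) one finds $Tf_\eps$ uniformly bounded on $\{\phi<\eps\}$, so the exponential integral stays bounded for every $\gamma$. The correct Adams-type test family is supported on an annulus with a power weight, $f_{r,R}(x)\propto |x|^{-s}\mathbf 1_{\{r<|x|<R\}}$ (in the $\R^n$ picture), sent $r\to 0$ at fixed small $R$; this yields $|Tf_{r,R}|^{n/(n-s)}\sim \ln(R/r)$ on $B_r$ and recovers the sharp constant $(n+\beta)/|\sph|$ when paired with the weight $|x|^\beta$. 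See Section~\ref{subsection sharpness}.
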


As in Adams' paper \cite{Adams}, we may deduce from this the Moser-Trudinger-Adams inequality in its differential form, more precisely Theorem \ref{thm mt local} and the following two theorems. Recall that the constant $K_{n,s}$ is given in \eqref{K definition} and that the operators $P_s$ and $P_{2s}^{1/2}$ are defined in \eqref{paneitz} and \eqref{Pn^12}.

\begin{theorem}
\label{corollary mt differential bis}
Let $s \in (0, \frac{n}{2}]$ and let $Q$ be as in \eqref{Q decay}. Then there is $C > 0$ such that 
\begin{equation}
\label{ineq corollary bis}
\int_\Sph \exp \left( \frac{n + \beta}{|\sph|} K_{n,s}^{-\frac{n}{n-s}}  |u|^\frac{n}{n-s} \right) Q(\eta)  \leq C. 
\end{equation}
uniformly for $u \in C^\infty(\Sph)$ radial with $\|P_{2s}^{1/2} u\|_{L^{n/s}(\Sph)} \leq 1$. If $s = n/2$, we assume additionally $\int_\Sph u = 0$. Moreover, the constant $ \frac{n+\beta}{|\sph|} K_n^{-\frac{n}{n-s}}$ is sharp in the sense that if it is replaced by $\gamma >  \frac{n+\beta}{|\sph|}K_{n,s}^{-\frac{n}{n-s}}$, then the constant $C$ is no longer uniform in $u$.
\end{theorem}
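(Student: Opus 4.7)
The strategy is to deduce Theorem \ref{corollary mt differential bis} from the convolution-form inequality of Theorem \ref{theorem mt convolution} by representing $u$ as $u = Tf$ with $f := P_{2s}^{1/2} u$ and $T = (P_{2s}^{1/2})^{-1}$, and by verifying that the kernel of $T$ has the asymptotic behavior \eqref{expansion K} with leading constant $K_{n,s}$. This mirrors Adams' passage from a Riesz-potential inequality to the differential Moser--Trudinger inequality on $\R^n$.

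\textbf{Step 1: Inverting $P_{2s}^{1/2}$.} For $s \in (0, n/2)$, formula \eqref{Pn^12} shows that every eigenvalue of $P_{2s}^{1/2}$ is strictly positive, so $P_{2s}^{1/2}$ is invertible on $L^2(\Sph)$. For $s = n/2$, the only zero eigenvalue corresponds to constants, which is precisely why we impose $\int_\Sph u = 0$; on the mean-zero subspace, $P_n^{1/2}$ is again invertible. In both cases, the commutation with the isometry group of $\Sph$ forces the inverse to be a convolution operator
\[ Tf(\eta) = \int_\Sph k_s(\eta, \xi) f(\xi) \, d\sigma(\xi), \]
with $k_s(\eta, \xi)$ depending only on the geodesic distance $d(\eta, \xi)$. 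In particular $T$ preserves radial functions, so $u = Tf$ is radial whenever $f$ is.

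\textbf{Step 2: Diagonal expansion of $k_s$.} The central technical point is to show
\begin{equation}
\label{plan kernel expansion}
 k_s(\eta, \xi) = K_{n,s} \, d(\eta, \xi)^{-(n-s)} \bigl(1 + \mathcal{O}(d(\eta,\xi)^\alpha)\bigr)
\end{equation}
for some $\alpha > 0$. This is natural since near the diagonal $\Sph$ is Euclidean up to lower-order corrections and the principal symbol of $P_{2s}^{1/2}$ matches that of $(-\Delta_{\Sph})^{s/2}$; on $\R^n$ the Green's function of $(-\Delta)^{s/2}$ is exactly $K_{n,s} |x-y|^{-(n-s)}$ by the definition of $K_{n,s}$ in \eqref{K definition}. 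A rigorous derivation proceeds via the spectral series $k_s(\eta, \xi) = \sum_{l,m} \lambda_l^{-1/2} Y_{lm}(\eta) Y_{lm}(\xi)$, using the addition formula to reduce it to a function of $d(\eta, \xi)$, and using Stirling's formula to expand
\[ \lambda_l^{1/2} = \sqrt{\tfrac{\Gamma(l + n/2 + s)}{\Gamma(l + n/2 - s)}} = l^s \bigl(1 + \mathcal{O}(l^{-1})\bigr) \]
so that $k_s$ differs from the Green's function of $(-\Delta_{\Sph})^{s/2}$ (which is known explicitly) by a kernel of strictly weaker singularity. Alternatively, one can use the conformal covariance property \eqref{conf trafo Pn} to transport the Euclidean Green's function of $(-\Delta)^{s/2}$ to $\Sph$ modulo smoother remainders.

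\textbf{Step 3: Application and sharpness.} Setting $\tilde k_s := k_s / K_{n,s}$ and $\tilde T f := \int \tilde k_s(\eta, \xi) f(\xi) \, d\sigma(\xi)$, \eqref{plan kernel expansion} shows that $\tilde k_s$ satisfies \eqref{expansion K}. Given $u$ as in the hypothesis, $f := P_{2s}^{1/2} u$ lies in $L^{n/s}_{\text{rad}}(\Sph)$ with $\|f\|_{L^{n/s}} \leq 1$, and $u = K_{n,s} \tilde T f$. Therefore
\[ \tfrac{n+\beta}{|\sph|} K_{n,s}^{-n/(n-s)} |u|^{n/(n-s)} = \tfrac{n+\beta}{|\sph|} |\tilde T f|^{n/(n-s)}, \]
and Theorem \ref{theorem mt convolution} applied to $\tilde T$ yields \eqref{ineq corollary bis}. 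Sharpness is immediate in the reverse direction: a sequence $(f_k)$ saturating the sharp constant for $\tilde T$ gives, via $u_k := K_{n,s} \tilde T f_k$ (subtracting its mean when $s = n/2$), a sequence witnessing sharpness of the constant in \eqref{ineq corollary bis}. The main obstacle is the careful justification of the expansion \eqref{plan kernel expansion}, including a precise value of the leading constant; everything else is bookkeeping that follows the classical Adams reduction.
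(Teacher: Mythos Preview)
Your approach is correct and essentially identical to the paper's: represent $u = Tf$ with $f = P_{2s}^{1/2}u$, establish the diagonal expansion \eqref{plan kernel expansion} of the Green's kernel, and then invoke Theorem~\ref{theorem mt convolution}. The paper isolates your Step~2 as a separate lemma (Lemma~\ref{lemma Green's function}) and proves it by writing the Green's function of $P_{2s}^{1/2}$ as $G_s = P_{2s}^{1/2} G$, where $G(\eta,\xi) = K_{n,2s}|\eta-\xi|^{-n+2s}$ is the explicit Green's function of $P_{2s}$, and then using that the principal symbol of $P_{2s}^{1/2}$ equals $\sigma(P_{2s})^{1/2}$ so that $P_{2s}^{1/2} = (-\Delta_{\Sph})^{s/2} + \text{lower order}$; this is a cleaner route to the leading constant $K_{n,s}$ than the spectral-series argument you sketch, though both are valid. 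One small point: in your sharpness argument for $s = n/2$, ``subtracting the mean'' works but requires the observation that the mean of $Tf_k$ stays bounded; the paper avoids this by noting that its $G_s$ automatically produces mean-zero functions (property~(2) of Lemma~\ref{lemma Green's function}).
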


Clearly, Theorem \ref{corollary mt differential bis} contains Theorem \ref{theorem mt differential intro} as its special case $s = n/2$. 

\begin{theorem}
\label{corollary mt differential ter}
Let $s \in (0, n)$ and let $Q$ be as in \eqref{Q decay}. Then there is $C > 0$ such that 
\begin{equation}
\label{ineq corollary ter}
\int_\Sph \exp \left( \frac{n + \beta}{|\sph|} K_{n,s}^{-\frac{n}{n-s}}  |u|^\frac{n}{n-s} \right) Q(\eta)  \leq C. 
\end{equation}
uniformly for $u \in C^\infty(\Sph)$ radial with $\|P_{s} u\|_{L^{n/s}(\Sph)} \leq 1$. Moreover, the constant $ \frac{n+\beta}{|\sph|} K_n^{-\frac{n}{n-s}}$ is sharp in the sense that if it is replaced by $\gamma >  \frac{n+\beta}{|\sph|}K_{n,s}^{-\frac{n}{n-s}}$, then the constant $C$ is no longer uniform in $u$.
\end{theorem}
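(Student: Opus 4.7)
The plan is to reduce Theorem \ref{corollary mt differential ter} to the convolution-type inequality Theorem \ref{theorem mt convolution} by identifying the Green's function of $P_s$ on $\Sph$ and verifying that its kernel satisfies the expansion \eqref{expansion K} with leading constant $K_{n,s}$. Since $P_s$ is a spherical-harmonic multiplier, it preserves radial functions; since its lowest eigenvalue $\Gamma(\tfrac{n+s}{2})/\Gamma(\tfrac{n-s}{2})$ is strictly positive for $s \in (0,n)$, it is also invertible on $L^{n/s}(\Sph)$. Thus, given a radial $u \in C^\infty(\Sph)$ with $\|P_s u\|_{L^{n/s}(\Sph)} \le 1$, the function $f := P_s u$ is radial and has $L^{n/s}$-norm at most one, and the task reduces to writing $u = K_{n,s}\,T f$ for an operator $T$ of the form \eqref{T definition} whose kernel fulfills \eqref{expansion K}.

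To compute the Green's function explicitly, I would use the conformal transformation property \eqref{conf trafo Pn} with $s/2$ in place of $s$, so that the left-hand operator becomes $(-\Delta)^{s/2}$ and the right-hand operator becomes $P_s$. Setting $\tilde u := J_{\mathcal S}^{(n-s)/(2n)}\, u \circ \mathcal S$ and $\tilde f := J_{\mathcal S}^{(n+s)/(2n)}\, f \circ \mathcal S$, we obtain $(-\Delta)^{s/2} \tilde u = \tilde f$ on $\R^n$, so by the Euclidean Green's function
\[ \tilde u(x) = K_{n,s} \int_{\R^n} |x-y|^{-n+s}\, \tilde f(y)\, \diff y. \]
Changing variables back to the sphere and using the standard stereographic identity
\[ |\mathcal S^{-1}(\eta) - \mathcal S^{-1}(\xi)|^2 = |\eta-\xi|^2\, J_{\mathcal S^{-1}}(\eta)^{1/n}\, J_{\mathcal S^{-1}}(\xi)^{1/n}, \]
the Jacobian factors coming from the conjugation of $u$ and $f$ and from the change of measure cancel exactly, and one arrives at the clean formula
\[ u(\eta) = K_{n,s} \int_\Sph |\eta-\xi|^{-n+s}\, f(\xi)\, \diff\sigma(\xi). \]
Since $|\eta-\xi| = 2\sin(d(\eta,\xi)/2)$, the kernel $k(\eta,\xi) := |\eta-\xi|^{-n+s}$ depends only on the geodesic distance, and the Taylor expansion of $\sin$ at $0$ gives $k(\eta,\xi) = d(\eta,\xi)^{-n+s}\bigl(1 + \mathcal O(d(\eta,\xi)^2)\bigr)$, confirming \eqref{expansion K} with $\alpha = 2$.

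Defining $T f$ as in \eqref{T definition} with this kernel, we have $u = K_{n,s}\, T f$ and hence $|u|^{n/(n-s)} = K_{n,s}^{n/(n-s)}\,|T f|^{n/(n-s)}$. Applying Theorem \ref{theorem mt convolution} to $f$ then yields \eqref{ineq corollary ter} with the stated constant. Sharpness is inherited from Theorem \ref{theorem mt convolution}: invertibility of $P_s$ makes $u \leftrightarrow P_s u$ a bijection between admissible classes, so any strictly larger replacement for $\tfrac{n+\beta}{|\sph|}K_{n,s}^{-n/(n-s)}$ in \eqref{ineq corollary ter} would, through the identity $u = K_{n,s}\,T f$, produce a strictly larger replacement for $\tfrac{n+\beta}{|\sph|}$ in Theorem \ref{theorem mt convolution}, contradicting its sharpness. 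The entire substance of the argument is contained in Theorem \ref{theorem mt convolution}; I do not anticipate any real obstacle in the present reduction, since the conformal invariance of the sphere conspires to cancel all Jacobians in the Green's function, leaving only the elementary comparison between Euclidean and geodesic distance.
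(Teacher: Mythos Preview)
Your proposal is correct and follows essentially the same approach as the paper: write $u$ via the Green's function of $P_s$, check that the kernel satisfies \eqref{expansion K}, and apply Theorem \ref{theorem mt convolution}. The only difference is that the paper simply cites the explicit formula $G_s(\eta,\xi)=K_{n,s}|\eta-\xi|^{-n+s}$ from \cite{Be} (equation \eqref{Gs for Ps}), whereas you rederive it via the conformal transformation property \eqref{conf trafo Pn}; your computation of the Jacobian cancellations is correct and recovers exactly this formula.
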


Indeed, Theorems \ref{corollary mt differential bis} and \ref{corollary mt differential ter} follow from Theorem \ref{theorem mt convolution} by expressing $u$ using the Green's function of the operators $P_{2s}^{1/2}$ and $P_s$ respectively. It only remains to check that the resulting convolution integral satisfies \eqref{T definition} and \eqref{expansion K}, with $f = P_{2s}^{1/2} u$ and $f = P_s u$, respectively.

\subsection{Estimates on the Green's function and proof of Theorem \ref{theorem mt differential intro}}
\label{subsection greens}

In this subsection, we prove the following expansion of the Green's functions of $P_{2s}^{1/2}$ and $P_s$, which allows us to deduce Theorem \ref{theorem mt differential intro}, \ref{corollary mt differential bis} and \ref{corollary mt differential ter} from Theorem \ref{theorem mt convolution}. 
We recall that the operators $P_s$ and $P_{2s}^{1/2}$ have been defined in \eqref{paneitz} and \eqref{Pn^12} respectively. 

\begin{lemma}
\label{lemma Green's function}
Let $0 < s \leq \frac{n}{2}$. There exists a function $G_s: \Sph \times \Sph \to \R$ with the following properties. 
\begin{enumerate}
\item For every $u \in C^\infty(\Sph)$ (with $\int_\Sph u = 0$ if $s=\frac{n}{2}$), one has 
\[ u(\eta) = \int_\Sph G_s(\eta, \xi) P_{2s}^{1/2} u(\xi) \diff \sigma(\xi), \]
\item For every $f \in C^\infty(\Sph)$, the function $u(\eta):= \int_\Sph G_s(\eta, \xi) f(\xi)$ satisfies $u \in C^\infty(\Sph)$ (and $\int_\Sph u = 0$ if $s=\frac{n}{2}$) and $P_{2s}^{1/2} u = f$.
\item $G_s(\eta, \xi)$ only depends on $d(\eta, \xi)$. Near the diagonal, one has the expansion 
\begin{equation}
\label{greens estimate}
G_s(\eta, \xi) = K_{n,s} d(\eta, \xi)^{-n+s} \left(1 + \mathcal O\left(d(\eta, \xi)\right)^\alpha \right), 
\end{equation} 
for some $\alpha > 0$. 
\end{enumerate}
Moreover, the same statements hold (without the condition  that $\int u = 0$) if $P_{2s}^{1/2}$ is replaced by $P_s$, and $s \in (0,n)$.
\end{lemma}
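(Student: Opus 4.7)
The plan is to construct $G_s$ via a spherical-harmonic expansion, verify the functional properties (1) and (2) by spectral theory, and establish the diagonal asymptotics (3) by comparison with the Euclidean Green's function of $(-\Delta)^{s/2}$ via stereographic projection. For the $P_{2s}^{1/2}$ case, set
\begin{equation*}
G_s(\eta, \xi) := \sum_{(l,m):\, \mu_l \neq 0} \mu_l^{-1} Y_{lm}(\eta) Y_{lm}(\xi), \qquad \mu_l := \sqrt{\tfrac{\Gamma(l+\frac{n}{2}+s)}{\Gamma(l+\frac{n}{2}-s)}},
\end{equation*}
and analogously with $\mu_l = \Gamma(l+\frac{n}{2}+\frac{s}{2})/\Gamma(l+\frac{n}{2}-\frac{s}{2})$ in the $P_s$ case. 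The spherical-harmonic addition theorem, which asserts that $\sum_m Y_{lm}(\eta) Y_{lm}(\xi)$ is a constant multiple of $C_l^{(n-1)/2}(\cos d(\eta,\xi))$, makes $G_s$ depend only on $d(\eta, \xi)$. Properties (1) and (2) follow routinely from spectral theory: both $P_{2s}^{1/2}$ and $P_s$ are positive self-adjoint, with trivial kernel except in the critical case $s = n/2$ for $P_{2s}^{1/2}$, where the kernel consists of constants; the growth $\mu_l \sim l^s$ ensures that the Fourier coefficients $\mu_l^{-1} \widehat{f}(l,m)$ of $Gf$ decay faster than any power whenever $f \in C^\infty(\Sph)$, so $Gf \in C^\infty(\Sph)$.

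For the diagonal expansion \eqref{greens estimate}, I would treat $P_s$ first, since it falls out directly from the conformal covariance identity \eqref{conf trafo Pn} applied with $s/2$ in place of $s$. This identity intertwines $P_s$ on $\Sph$ with $(-\Delta)^{s/2}$ on $\R^n$. Transporting the Euclidean Green's function $K_{n,s}|x-y|^{-n+s}$ to the sphere via stereographic projection, the conformal weight factors $J_{\mathcal S}^{(n-s)/(2n)}$ combine with the distance identity
\begin{equation*}
|\mathcal S(x) - \mathcal S(y)| = |x-y|\, J_{\mathcal S}(x)^{1/(2n)} J_{\mathcal S}(y)^{1/(2n)}
\end{equation*}
to cancel cleanly, yielding $G_{P_s}(\eta, \xi) = K_{n,s} |\eta - \xi|_{\mathrm{chord}}^{-n+s}$. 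Since $|\eta - \xi|_{\mathrm{chord}} = 2 \sin(d(\eta,\xi)/2) = d(\eta,\xi)(1 + O(d(\eta,\xi)^2))$ on the diagonal, this gives \eqref{greens estimate} with $\alpha = 2$ for $P_s$.

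For $P_{2s}^{1/2}$, I would proceed by comparison with $P_s$. Stirling's formula gives that both eigenvalue sequences satisfy $\mu_l = l^s + O(l^{s-1})$, so their reciprocals differ by $O(l^{-s-1})$. Consequently $G_{P_{2s}^{1/2}} - G_{P_s}$ has Gegenbauer coefficients more summable by one power than those of $G_{P_s}$ itself, and standard tail estimates on the resulting series (using the integral representation of $C_l^{(n-1)/2}$) show that the difference is H\"older continuous on a neighbourhood of the diagonal with some exponent $\alpha' > 0$. Combined with the expansion for $G_{P_s}$, this yields \eqref{greens estimate} for $P_{2s}^{1/2}$ with some $\alpha > 0$. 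The main technical obstacle is precisely this comparison step: rigorously translating the better decay of the Fourier coefficients into a milder singularity of the kernel. This can be done either via pseudo-differential calculus on $\Sph$, exploiting that $P_{2s}^{1/2} - P_s$ is a pseudo-differential operator of strictly lower order than $P_s$, or more elementarily by direct Gegenbauer-series analysis as indicated above.
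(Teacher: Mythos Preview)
Your proposal is correct, and for the $P_s$ case it coincides with the paper's: both rely on the explicit formula $G_{P_s}(\eta,\xi)=K_{n,s}|\eta-\xi|^{-n+s}$ (the paper simply cites this from \cite{Be}, while you re-derive it from conformal covariance).

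For $P_{2s}^{1/2}$, however, you take a genuinely different route. The paper writes $G_s = P_{2s}^{1/2} G$, where $G$ is the (explicitly known) Green's function of $P_{2s}$, and then invokes pseudo-differential calculus to identify the principal symbol $\sigma(P_{2s}^{1/2})=\sigma(P_{2s})^{1/2}$, so that $P_{2s}^{1/2}=(-\Delta_{\Sph})^{s/2}+\text{lower order}$; applying this to the expansion of $G$ yields \eqref{greens estimate} directly. Your approach instead compares the two inverses $G_{P_{2s}^{1/2}}$ and $G_{P_s}$ at the spectral level, using Stirling to show their Gegenbauer coefficients differ by $O(l^{-s-1})$ and then arguing that this extra decay forces the difference to be less singular on the diagonal. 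Both arguments ultimately lean on the same machinery (either pseudo-differential calculus or an equivalent translation from coefficient decay to kernel regularity), and both are sketched at a comparable level of detail. The paper's version is a bit more economical---one appeal to the symbolic calculus suffices---whereas yours is more explicitly spectral and makes the role of the eigenvalue asymptotics transparent; it also gives the dependence of $G_s$ on $d(\eta,\xi)$ for free via the addition theorem, a point the paper leaves implicit.
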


For $P_s$ for $s \in (0,n)$, Lemma \ref{lemma Green's function} follows directly from the fact that the Green's function of $P_s$ is explicitly given by 
\begin{equation}
\label{Gs for Ps} 
G_s(\eta, \xi) = K_{n,s} |\eta - \xi|^{-n+s}, 
\end{equation} see e.g. \cite{Be}. 

For $P_{2s}^{1/2}$ and $s \in (0, n/2]$, the argument is a generalization of the case $s=\frac{n}{2}$, see the proof of \cite[Proposition 2.2]{Nd}.

\begin{proof}
%For $s < n/2$, the operator $P_{2s}$ is strictly positive definite, while for $s = n/2$ it is only nonnegative with kernel consisting of constant functions and thus the condition $\int u = 0$ needs to be imposed. In any case $P_{2s}^\frac{1}{2}$ is well defined, thanks to classical spectral theory. 
Let us assume that $s<\frac{n}{2}$, the proof is readily the same if $s=\frac{n}{2}$ by adding the mean value. Let $w \in C^\infty(S^n)$, we have 
$$w(\eta)=\int_{S^n} G(\eta,\xi) (P_{2s}w)(\xi) \, d\sigma(\xi),$$
where $G$ is the Green function of $P_{2s}$. By self-adjointness of $P_{2s}$ we have
$$w(\eta)=\int_{S^n} (P_{2s}^\frac{1}{2}G)(\eta,\xi) (P_{2s}^\frac{1}{2} w)(\xi) \, d\sigma(\xi).$$
Thus $G_s= P_{2s}^\frac{1}{2}G$ is the function we are looking for. Let us check the estimate along the diagonal. By classical result on pseudo-differential operators, see \cite{AG}, the principal symbols satisfy $\sigma(P_{2s}^\frac{1}{2})=\sigma(P_{2s})^\frac{1}{2}$, hence $P_{2s}^\frac{1}{2}= (-\Delta_\Sph)^{s/2} + \text{lower order operator}$. By using \eqref{Gs for Ps} (respectively $G(\eta,\xi)= K_{n,s} \ln\left(\frac{1}{d(\eta,\xi)}\right)  + O(1)$ for $s = n/2$), we easily deduce the desired estimate.
\end{proof}

%
%Similarly we can prove that
%
%\begin{lemma}
%\label{lemma Green's function bis}
%Let $0 < s < n$. There exists a function $G_s': \Sph \times \Sph \to \R$ with the following properties. 
%\begin{enumerate}
%\item For every $u \in C^\infty(\Sph)$, one has 
%\[ u(\eta) = \int_\Sph G_s(\eta, \xi) P_{s} u(\xi) \diff \sigma(\xi), \]
%\item For every $f \in C^\infty(\Sph)$, the function $u(\eta):= \int_\Sph G_s'(\eta, \xi) f(\xi)$ satisfies $u \in C^\infty(\Sph)$ and $P_{s} u = f$.
%\item $G_s'(\eta, \xi)$ only depends on $d(\eta, \xi)$. Near the diagonal, one has the expansion 
%\begin{equation}
%\label{greens estimate bis}
%G_s(\eta, \xi)' = K_{n,s} d(\eta, \xi)^{-n+s} \left(1 + \mathcal O\left(d(\eta, \xi)\right)^\alpha \right) ,
%\end{equation} 
%for some $\alpha > 0$. 
%\end{enumerate}
%\end{lemma}
%

\begin{proof}[Proof of Theorem \ref{corollary mt differential bis} and Theorem \ref{corollary mt differential ter}]
We only prove Theorem \ref{corollary mt differential bis}, since the proof of Theorem \ref{corollary mt differential ter} is identical after replacing $P_{2s}^{1/2}$ by $P_s$.
Under the assumptions of Theorem \ref{corollary mt differential bis}, by Lemma \ref{lemma Green's function} we may write 
\begin{equation}
\label{u = Gf} 
u(\eta) = \int_\Sph G_s(\eta, \xi) f(\xi) \diff \xi =: Tf(\eta), 
\end{equation}
where $f := P_{2s}^{1/2} u$ and $G_s(\eta, \xi)$ is the Green's function associated with $P_{2s}^{1/2}$. 

Still by Lemma \ref{lemma Green's function}, the kernel $k(\eta, \xi) = G_s(\eta, \xi)$ satisfies \eqref{expansion K}. Moreover, $\|f\|_{L^{n/s}(\Sph)} = \|P_{2s}^{1/2} u\|_{L^{n/s}(\Sph)} \leq 1$ by assumption. 
Hence 
\[ \int_\Sph \exp \left( \frac{n + \beta}{|\sph|} |u|^\frac{n}{n-s} \right) Q(\eta) \diff \sigma(\eta) = \int_\Sph \exp \left( \frac{n + \beta}{|\sph|} |Tf|^\frac{n}{n-s} \right) Q(\eta) \diff \sigma(\eta) \leq C \]
by Theorem \ref{theorem mt convolution}. 

The sharpness of the constant is again a direct consequence of the sharpness statement in Theorem \ref{theorem mt convolution}. Indeed, let $\gamma > \frac{n+ \beta}{|\sph|}$.Then by Theorem \ref{theorem mt convolution} there exist $f_k$ radial with $\|f_k\|_{L^{n/s}(\Sph)} \leq 1$ such that 
\[ \int_\Sph \exp \left( \gamma |u_k|^\frac{n}{n-s} \right) Q(\eta) \diff \eta \to \infty \]
as $k \to \infty$, where we wrote $u_k :=Tf_k$ with $T f_k$ defined as in \eqref{u = Gf}. Since $u_k$ are radial and $\|P_{2s}^{1/2} u_k\|_{L^{n/s}(\Sph)} = \|f_k\|_{L^{n/s}(\Sph)} \leq 1$, this completes the proof.  
%By using the 1-to-1 correspondence between $u$ and $f$ through \eqref{u = Gf} given by Lemma \ref{lemma Green's function}, the sharpness statement follows immediately from that of Theorem \ref{theorem mt convolution}. 
\end{proof}

\section{Proof of Theorem \ref{theorem mt convolution}}

\subsection{An improved Adams' lemma}

The core of the proof of Theorem \ref{theorem mt convolution} is the following one-dimensional calculus lemma stated as Lemma \ref{lemma adams} below. To state it, for $p > 1$ we denote by $p' = \frac{p}{p-1}$ the Hölder conjugate exponent of $p$. 

\begin{lemma}
\label{lemma adams}
Let $1 < p < \infty$, and let $a: \R \times [0, \infty) \to [0,\infty)$ be a function such that
\[ a(w,t) \leq  \begin{cases} 1 + g(w,t) & \text{ for } w \in [0, t], \\
h(w,t) & \text{ for } w \in \R \setminus [0, t], \end{cases}
\]
for some non-negative functions $g(w,t)$ and $h(w,t)$ with the property that
\begin{equation}
\label{g, h assumption adams}
\int_0^t g(w,t) + g(w,t)^{p'} \diff w + \int_{\R \setminus [0,t]} h(w,t)^{p'} \diff w \leq b < \infty, 
\end{equation}
uniformly in $t \in [0, \infty)$. 

For $\phi: \R \to (0,\infty)$ satisfying $\int_{\R} \phi(w)^p \diff w \leq 1$, let 
\[ F(t) := \left( \int_\R a(w,t) \phi(w) \diff w \right)^{p'} - t.  \]
Then for every $\tilde{\alpha} > 0$ there is $C > 0$ not depending on $\phi$ such that 
\[ \int_0^\infty e^{-\tilde{\alpha} F(t)} \diff t \leq C. \]
\end{lemma}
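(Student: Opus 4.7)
The argument follows Adams' strategy \cite{Adams}: obtain a sharp pointwise bound on $M(t) := \int_\R a(w,t)\phi(w)\diff w$, then invoke a Garsia-type distributional estimate. Let $E(t) := \int_0^t \phi$, $A(t) := \int_0^t \phi^p$, $B(t) := \int_{\R\setminus[0,t]}\phi^p$, so that $A(t)+B(t)\leq 1$.

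Splitting $M$ over $[0,t]$ and its complement, the hypothesis on $a$ gives
\[
M(t) \leq E(t) + \int_0^t g\phi \diff w + \int_{\R\setminus[0,t]} h\phi \diff w.
\]
H\"older combined with $\int_{\R\setminus[0,t]}h^{p'}\diff w\leq b$ handles the last term: $\int_{\R\setminus[0,t]}h\phi\diff w \leq b^{1/p'} B(t)^{1/p}$. The essential new ingredient is the treatment of the interior perturbation $\int_0^t g\phi\diff w$. Combining the two hypotheses $\int_0^t g\leq b$ and $\int_0^t g^{p'}\leq b$ via Young's inequality $g\phi \leq \frac{1}{p'}g^{p'} + \frac{1}{p}\phi^p$ yields
\[
\int_0^t g\phi\diff w \leq \frac{b}{p'} + \frac{1}{p} =: C_0,
\]
a constant independent of $t$ and $\phi$. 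Hence $M(t) \leq E(t) + C_0 + b^{1/p'} B(t)^{1/p}$.

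Raising to the $p'$-th power carefully, using $E(t)^{p'}\leq t\, A(t)^{p'-1}$ from H\"older together with $A+B\leq 1$ (so that $A^{p'-1}\leq 1 - (p'-1)B + O(B^2)$), one arrives at a pointwise bound of the schematic form $M(t)^{p'} \leq t - c\,t B(t) + C_1$, for constants $c, C_1 > 0$ depending only on $p$ and $b$. This reduces the task to a classical Garsia-type argument: the sublevel sets of $\Psi(t) := c\,t B(t)$ can be controlled via the distribution function of the non-decreasing function $B$, and integration by parts against $e^{-\tilde\alpha\lambda}\diff\lambda$ (layer-cake) then yields the desired uniform bound on $\int_0^\infty e^{-\tilde\alpha F(t)}\diff t$.

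The main obstacle is the sharp pointwise estimate above. A naive H\"older bound on $\int_0^t g\phi$ alone gives $b^{1/p'}A(t)^{1/p}$, which once raised to the $p'$-th power produces an error of order $t^{1/p}$ in $F(t)$ that destroys the Garsia distributional argument. The combined use of the $L^1$ and $L^{p'}$ bounds on $g$ via Young's inequality --- replacing the $t$-dependent error by a universal constant --- is precisely the improvement over the classical Adams--Garsia lemma announced in the introduction. This refinement is what enables the proof to cover the case $\beta > 0$ of Theorem \ref{thm mt local}, for which the classical lemma does not suffice.
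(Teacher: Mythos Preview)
Your proof has a genuine gap at the step ``Raising to the $p'$-th power carefully\ldots one arrives at a pointwise bound of the schematic form $M(t)^{p'} \leq t - c\,t B(t) + C_1$.'' From your bound $M(t)\le E(t)+C_0+b^{1/p'}B(t)^{1/p}$ with $E(t)\le t^{1/p'}A(t)^{1/p}$, raising to the $p'$-th power unavoidably produces a cross term of size $p'\,C_0\,E(t)^{p'-1}$, and since $E(t)$ can be as large as $t^{1/p'}$ this cross term is of order $t^{1/p}$, not $O(1)$. Concretely, for $\phi=T^{-1/p}\mathbf 1_{[0,T]}$ one has $E(T)=T^{1/p'}$ and $B(T)=0$, so your bound gives $M(T)^{p'}\le (T^{1/p'}+C_0)^{p'}\sim T+p'C_0T^{1/p}$ and hence $F(T)\gtrsim T^{1/p}$; no Garsia-type argument can absorb this. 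Ironically this is the very defect you flag in the last paragraph for the ``naive H\"older bound'': replacing $b^{1/p'}A(t)^{1/p}$ by a universal constant $C_0$ is not an improvement, because either way an additive bounded term outside $E(t)$ costs $t^{1/p}$ after raising to the power $p'$.

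The paper's proof avoids this by \emph{not} separating $\int_0^t\phi$ from $\int_0^t g\phi$. Instead one applies H\"older to the product $(1+g)\phi$ on $[0,t]$ as a whole:
\[
\int_0^t(1+g)\phi\;\le\;\Bigl(\int_0^t(1+g)^{p'}\Bigr)^{1/p'}A(t)^{1/p}\;\le\;(t+d)^{1/p'}A(t)^{1/p},
\]
using $(1+g)^{p'}\le 1+C(g+g^{p'})$ together with the hypothesis \eqref{g, h assumption adams}. Now the error $d$ sits \emph{inside} the $p'$-th root, so after raising to the $p'$-th power it contributes an additive $d$, not $t^{1/p}$. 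This is precisely Adams' estimate \cite[eq.~(15)]{Adams} with $t$ replaced by $t+d$, and the rest of his argument (the bound $(t+d)^{1/p}L(t)\le A|\lambda|^{1/p}+B$ and the level-set estimate $|E_\lambda|\le C|\lambda|+D$) goes through verbatim. In particular, the ``improvement'' announced in the paper is not a new analytic device like Young's inequality; it is simply the observation that Adams' proof tolerates a $t$-dependent $g$ provided $\int_0^t g(\cdot,t)+g(\cdot,t)^{p'}$ stays uniformly bounded.
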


In its basic form, i.e. with $g(w,t) \equiv h(w,t) \equiv 0$, this lemma goes back to Moser \cite{Moser}. The inclusion of the term $h(w,t)$, which is of fundamental importance, is due to Adams \cite[Lemma 1]{Adams}. Adams' version has since been extended in various directions, among others, in \cite{Fontana, AlFeTr, FoMo}. Our new observation is that the error term $g(w,t)$ below may depend on $t$ as long as $g(\cdot,t)$ satisfies suitable integral bounds on the interval $[0,t]$, uniformly in $t > 0$. A typical example of an error term satisfying \eqref{g, h assumption adams} below, and in fact the one we shall use in the simplest case of the proof of Theorem \ref{theorem mt convolution}, would be 
\[ g(w,t) = C e^{-w} + C e^{w-t}, \]
Indeed, \cite{Fontana, FoMo} are able deal with a term of type $e^{-w}$, which arises essentially from the error term in \eqref{expansion K}. The second type $e^{w-t}$ only arises in our case, as a consequence of the lack of a rearrangement argument, as explained in the introduction. Notice that this term cannot be estimated (for $w \in [0,t]$, uniformly in $t > 0$) by any integrable function of $w$ alone. However, as we check in the following, the proof of Adams' lemma allows to include such a term as well. We believe that this observation can be useful in a wider context to prove Moser--Trudinger-type inequalities in settings where rearrangement is not available, notably in the presence of weights.

Our proof follows Adams' paper \cite[proof of Lemma 1]{Adams}. The adaptations we make are close to \cite[proof of Lemma 3.2]{Fontana}. Because of these similarities, we only give a sketch of the proof of Lemma \ref{lemma adams}. The reader may consult \cite{Adams, Fontana} for more details. 

\begin{proof}
For $\lambda \in \R$ we set $E_\lambda := \{ t \geq 0 \, : \, F(t) \leq \lambda \} $ and denote by $|E_\lambda|$ its Lebesgue measure. Since 
\[ \int_0^\infty e^{-\tilde{\alpha} F(t)} \diff t = \tilde{\alpha} \int_\R |E_\lambda| e^{-\tilde{\alpha}\lambda} \diff \lambda, \]
the lemma clearly follows if we can prove the following two assertions. 
\begin{enumerate}
\item[(a)] There is $c > 0$ such that $F(t) \geq -c$ for all $t \geq 0$. Moreover, if $t \in E_\lambda$, then there are constants $d, A, B$ such that 
\begin{equation}
\label{adams sigma estimate}
 (t + d)^{1/p} \left( \int_{\R \setminus [0,t]} \phi(w)^p \diff w \right) ^{1/p} \leq A |\lambda|^{1/p} + B. 
\end{equation}
\item[(b)] There are constants $C, D$ such that for every $\lambda \geq -c$, 
\[ |E_\lambda| \leq C|\lambda| + D. \]
\end{enumerate}
\textit{Proof of (a).  } Let $t \in E_\lambda$. Then by the  definition of $E_\lambda$, the bound on $a(w,t)$ and Hölder's inequality, 
\begin{align*}
& \qquad t - \lambda  \leq \left( \int_\R a(w,t) \phi(w) \diff w \right)^{p'} \\
& \leq \left((1 - L(t)^p)^{1/p} \left( \int_0^t (1 + g(w,t))^{p'} \diff w \right)^{1/p'}  + L(t) \left( \int_{\R \setminus [0,t]} h(w,t)^{p'} \diff w \right)^{1/p'} \right)^{p'} ,
\end{align*}
where we abbreviated 
\[ L(t) := \left( \int_{\R \setminus [0,t]} \phi(w)^p \diff w \right)^{1/p}. \]
The assumptions on $g$ imply that 
\[  \int_0^t (1 + g(w,t))^{p'} \diff s \leq t + d, \]
where $d$ is a constant only depending on $b$. 
We thus obtain 
\[ t - \lambda \leq \left( (1 - L(t)^p)^{1/p} (t + d)^{1/p'} + b L(t)\right)^{p'}. \]
This is the same estimate as \cite[eq. (15)]{Adams}. Now both assertions of (a) follow by arguing as in \cite{Adams}. 

\textit{Proof of (b).  } Suppose that $t_1, t_2 \in E_\lambda$ are such that $\lambda < t_1 < t_2$. We shall show that $t_2 - t_1 \leq C |\lambda|$ for all $\lambda$ large enough, which clearly implies (b). Indeed, similarly to the above 
\begin{align*}
(t_2 - \lambda)^{1/p'} &\leq  \left( \int_0^{t_1} (1 + g(w, t_2))^{p'} \diff w \right)^{1/p'} \left( \int_0^{t_1} \phi(w)^p  \diff w \right)^{1/p} \\
&\quad  + \left( \int_{t_1}^{t_2} (1 + g(w, t_2))^{p'} \diff w \right)^{1/p'} L(t_2)   + \left( \int_{t_1}^{t_2} h(w,t)^{p'} \diff w \right)^{1/2} L(t_2). 
\end{align*}
By the assumptions on $g(w,t)$ we can estimate
\begin{align*}
 \int_0^{t_1} (1 + g(w, t_2))^{p'} \diff w  =  t_1 + d \int_0^{t_2} g(w,t) \diff w + d \int_0^{t_2} g(w,t)^{p'} \diff w \leq t_1 + d
\end{align*}  
and, analogously,
\begin{align*}
\left( \int_{t_1}^{t_2} (1 + g(w, t_2))^{p'} \diff w \right)^{1/p'} \leq (t_2 - t_1 + d)^{1/p'} \leq (t_2 - t_1)^{1/p'} + d^{1/p'}. 
\end{align*}
Combining everything and abbreviating $\delta := (t_2 - t_1)^{1/p'}$, we obtain 
\begin{align*}
t_2 - \lambda  \leq \left( (t_1 + d)^{1/p'} + (\delta + b) L(t_1) \right)^{p'}
\end{align*}
Again, this is the same estimate that occurs in \cite{Adams}. Using the inequality \eqref{adams sigma estimate} from (a) permits to conclude as in \cite{Adams}. 
\end{proof}

\subsection{Transforming to $\R^n$}
\label{subsection transf to Rn}

As a first step in the proof of Theorem \ref{theorem mt convolution}, we recast the setting on $\R^n$ using stereographic projection. 

We first observe $d_{\Sph}(\eta, \xi) = |\eta - \xi|(1 + \mathcal O(|\eta- \xi|^\alpha))$ for suitable $\alpha > 0$ (Here $|\eta - \xi|$ denotes the distance in $\R^{n+1}$.) Together with \eqref{expansion K}, up to taking a smaller $\alpha > 0$, it is easy to deduce from this that $Tf$ defined by \eqref{T definition} satisfies
\begin{equation}
\label{Tf estimate}
Tf(\eta) \leq \int_\Sph |\eta- \xi|^{-n+s} \left(1 + C |\eta- \xi|^\alpha\right) f(\xi) \diff \sigma(\xi). 
\end{equation} 
Now define 
\[ \tilde{f}(x) := f(\mathcal S(x)) J_\mathcal S(x)^{\frac{s}{n}}, \] 
so that $\|\tilde{f}\|_{L^{\frac{n}{s}}(\R^n)} = \|f\|_{L^{\frac{n}{s}}(\Sph)}$. Since the distance transforms as
\[ |\mathcal S(x) - \mathcal S(y)| = J_\mathcal S(x)^{1/2n} |x-y| J_\mathcal S(y)^{1/2n}, \]
by change of variables we obtain from \eqref{Tf estimate} that
\begin{align*}
 \tilde{T} \tilde{f}(x) &:= 
 Tf(\mathcal S(x)) \leq \int_{\R^n} |\mathcal S(x) - \mathcal S(y)|^{-n+s} \left(1 + C|\mathcal S(x) - \mathcal S(y)|^\alpha \right) \tilde{f}(y) J_\mathcal S(y)^\frac{n-s}{n} \diff y \\
&=   \int_{\R^n} |x-y|^{-n+s} \left(\frac{1+|x|^2}{1+|y|^2}\right)^\frac{n-s}{2} \left(1 + C|x-y|^\alpha \left((1+|x|^2)(1+|y|^2)\right)^{-\frac{\alpha}{2}} \right) \tilde{f}(y) \diff y. 
\end{align*} 
By the same change of variables in the integral appearing in Theorem \ref{theorem mt convolution}, we see that it suffices to prove 
\begin{equation}
\label{mt R^n}
\int_{\R^n} \exp \left( \frac{n + \beta}{|\sph|} |\tilde{T} \tilde{f}|^\frac{n}{n-s}  \right) K(x) \diff x \leq C 
\end{equation} 
uniformly in $\tilde{f} \in L^{\frac{n}{s}}_\text{rad}(\R^n)$ with $\|\tilde{f}\|_{\frac{n}{s}} \leq 1$. Here $K(x) := Q(\mathcal S(x)) J_\mathcal S(x)$ satisfies 
\begin{equation}
\label{K growth bound}
K(x) \leq C |x|^\beta \exp(- |x|^\sigma)
\end{equation} 
for some $C, \sigma > 0$, $\beta \in \R$. 

\subsection{Reducing to a local inequality}
\label{subsection local ineq}

In this and the following sections, we will work exclusively in the setting of $\R^n$. For ease of notation, we will drop the tilde and write henceforth, in view of the estimate in the previous section,
\begin{align}
\begin{split}
\label{Tf on Rn}
 Tf(x) &= \int_{\R^n} |x-y|^{-n+s} \left(\frac{1+|x|^2}{1+|y|^2}\right)^\frac{n-s}{2} f(y) \diff y \\
 &\qquad + C \int_{\R^n} |x-y|^{-n+s + \alpha } \frac{(1+|x|^2)^{\frac{n - s - \alpha}{2}}}{(1+|y|^2)^{\frac{n - s +\alpha}{2}}} f(y) \diff y. 
\end{split}
\end{align}
We may and will always assume without explicit mention that $f \geq 0$ and that $\alpha >0$ is so small that $n - s - \alpha > 0$. 

In this step we will use the exponential decay at infinity of the weight $K$ to control the integral in \eqref{mt R^n} in the region $\R^n \setminus B_1$. Indeed, this is a simple consequence of the following pointwise bound. 

\begin{lemma}
\label{lemma Tf leq log r}
Let $Tf$ be defined by \eqref{Tf on Rn} for some $f \in L^{\frac{n}{s}}_\text{rad}(\R^n)$. Then there is $C > 0$ not depending on $f$ such that 
\[ Tf(x) \leq C (1 + \ln (|x|)) \qquad \text{ for all } \, |x| \geq 1.   \]
\end{lemma}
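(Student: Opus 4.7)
The plan is to prove the pointwise bound directly on $\R^n$ for $|x| \geq 2$ (the range $|x| \in [1,2]$ being covered by routine estimates) by splitting the integral in \eqref{Tf on Rn} into four regions based on the size of $|y|$ relative to $|x|$, namely $\{|y| \leq 1\}$, $\{1 \leq |y| \leq |x|/2\}$, $\{|x|/2 \leq |y| \leq 2|x|\}$, and $\{|y| \geq 2|x|\}$, and applying H\"older's inequality in each.

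On the first, second, and fourth regions, the kernel $|x-y|^{-n+s}$ has no singularity (since $y$ is far from $x$), so one may use the crude bounds $|x-y| \geq |x|/2$ (regions $1,2$) and $|x-y| \geq |y|/2$ (region $4$). After simplifying the stereographic weights $(1+|x|^2)^{(n-s)/2}(1+|y|^2)^{-(n-s)/2}$, H\"older with conjugate exponents $n/s$ and $n/(n-s)$ yields $O(1)$ for regions $1$ and $4$. For region $2$, the integrand reduces to at most $C|y|^{-(n-s)}f(y)$, and H\"older gives $C\|f\|_{n/s}\bigl(\int_{1\leq |y|\leq |x|/2}|y|^{-n}\diff y\bigr)^{(n-s)/n} = C(\ln|x|)^{(n-s)/n} \leq C(1+\ln|x|)$; this is where the logarithmic factor arises.

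The delicate region is the annulus $\{|x|/2 \leq |y| \leq 2|x|\}$, where the diagonal singularity at $y=x$ makes the naive H\"older estimate diverge since $\int_A |x-y|^{-n}\diff y = \infty$. Here the radial symmetry of $f$ is essential: writing $y = r\omega$ with $\omega \in \sph$ reduces the integral to the one-dimensional form $\int_{|x|/2}^{2|x|} f(r) r^{n-1} J(r,|x|) \diff r$, where $J(r,|x|) := \int_{\sph}|x - r\omega|^{-n+s}\diff \omega$ is the spherical average. Applying the one-dimensional H\"older inequality with exponents $n/s$ and $n/(n-s)$ gives
\begin{equation*}
\int_{|x|/2}^{2|x|} f(r) r^{n-1} J(r,|x|) \diff r \leq C\|f\|_{n/s}\left(\int_{|x|/2}^{2|x|} r^{n-1} J(r,|x|)^{n/(n-s)} \diff r\right)^{(n-s)/n}.
\end{equation*}
A local analysis of $J(r,|x|)$ near $r = |x|$ via the coordinate change $\omega = \cos\theta \cdot x/|x| + \sin\theta \cdot \omega'$ yields the asymptotics $J(r,|x|) \sim C|x|^{-(n-1)}|r-|x||^{s-1}$ for $s<1$, a logarithmic blow-up for $s=1$, and boundedness for $s>1$. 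In all cases the exponent $n/(n-s)$ is tuned to make $\int r^{n-1} J^{n/(n-s)} \diff r$ convergent (the critical check is $(s-1)n/(n-s) > -1$ for $s \in (0,1)$), and a scaling $y \mapsto \lambda y$ shows that its value is a universal constant independent of $|x|$; hence this region contributes $O(1)$.

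The second summand in \eqref{Tf on Rn} has the strictly less singular kernel $|x-y|^{-n+s+\alpha}$, so the same four-region decomposition applies and produces strictly better estimates, in particular posing no difficulty on the annular region. Summing all contributions yields $Tf(x) \leq C(1+\ln|x|)$ as claimed. The main obstacle is the annular region: one must exploit the radial symmetry to reduce the two-variable integral to a one-dimensional one and then perform a careful asymptotic analysis of the spherical average $J(r,|x|)$ near the singular diagonal in order to verify that the resulting 1D H\"older bound is finite despite the angular blow-up of $J$.
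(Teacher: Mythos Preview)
Your proof is correct and follows essentially the same approach as the paper. Both arguments reduce to radial variables, introduce the spherical average of $|x-y|^{-n+s}$ (your $J(r,|x|)$ is, up to normalization, the function $g_{n-s}$ of the paper's Lemma~\ref{lemma mean value}), apply H\"older, and use the asymptotics $g_{n-s}(R)\lesssim (1-R)^{s-1}$ together with the integrability check $(s-1)\frac{n}{n-s}>-1$ to control the diagonal; the logarithm comes in both cases from the intermediate shell $1\lesssim |y|\lesssim |x|$. The only cosmetic difference is that you split into four regions and invoke the spherical average only on the annulus, whereas the paper works in radial coordinates from the start and splits only at $\rho=r$.
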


Using this growth bound and \eqref{K growth bound}, we find that  
\[ \int_{\R^n \setminus B_1} \exp \left( \frac{n+\beta}{|\sph|} |Tf|^\frac{n}{n-s}  \right) K(x) \diff x \leq C \int_{\R^n \setminus B_1} \exp \left( C (1 - |x|^{\sigma/2}  \right) \diff x \leq C \]
uniformly in $f$. Hence in what follows, we will only need to prove the local inequality
\[ \int_{B_1} \exp \left( \frac{n+\beta}{|\sph|} |Tf|^\frac{n}{n-s}  \right) K(x) \diff x \leq C. \]

\begin{proof}
[Proof of Lemma \ref{lemma Tf leq log r}]
Let us prove that the first summand of $Tf(x)$ in \eqref{Tf on Rn} satisfied the claimed bound. In radial variables $r = |x|$ and $\rho = |y|$, it reads 
\begin{align*}
 &\quad  \int_0^\infty \left( \int_\sph |re_1 - \rho \omega |^{-n+s} \diff \sigma(\omega)\right) \left(\frac{1+r^2}{1+\rho^2}\right)^{\frac{n-s}{2}} f(\rho) \rho^{n-1} \diff \rho  \\
&= |\sph| r^{-n+s} \int_0^r g_{n-s} \left(\frac{\rho}{r}\right) \left(\frac{1+r^2}{1+\rho^2}\right)^{\frac{n-s}{2}} f(\rho) \rho^{n-1} \diff \rho \\
&\qquad  + |\sph| \int_r^\infty g_{n-s} \left( \frac{r}{\rho} \right) \left(\frac{1+r^2}{1+\rho^2}\right)^{\frac{n-s}{2}} f(\rho) \rho^{s-1} \diff \rho,
\end{align*} 
where $g_{n-s}$ is as in Lemma \ref{lemma mean value} below. First suppose that $s > 1$, then by Lemma \ref{lemma mean value}, $g_{n-s}$ is bounded on $[0,1]$. Thus, using that $\int_0^\infty f(\rho)^{\frac{n}{s}} \rho^{n-1} \diff \rho \lesssim 1$, by Hölder we get that the first summand of $Tf(x)$ is bounded by a constant times
\[  \left( \int_0^r \frac{\rho^{n-1}}{(1+\rho^2)^{n/2}} \diff \rho \right)^{\frac{n-s}{n}} + (1+ r^2)^{\frac{n-s}{2}} \left( \int_r^\infty \frac{\rho^{-1}}{(1 + \rho^2)^{n/2}} \diff \rho \right)^{\frac{n-s}{n}}. \]
Since 
\[  \int_0^r \frac{\rho^{n-1}}{(1+\rho^2)^{n/2}} \diff \rho  \leq C (1 + \ln (r)) \]
and 
\[ \int_r^\infty \frac{\rho^{-1}}{(1 + \rho^2)^{n/2}} \diff \rho \leq C r^{-n} \]
for every $r \geq 1$, we obtain the conclusion. 

If $s < 1$, then by Lemma \ref{lemma mean value}, $g_{n-s}(R) \leq C (1-R)^{s-1}$. Inserting this bound and arguing as before, by Hölder the first summand of $Tf(x)$ is bounded by a constant times
\begin{align*}
&\quad \left( \int_0^r \left(1 - \frac{\rho}{r}\right)^{(s-1)\frac{n}{n-s}} \frac{\rho^{n-1}}{(1+\rho^2)^{n/2}} \diff \rho \right)^{\frac{n-s}{n}} \\
& + (1+ r^2)^{\frac{n-s}{2}} \left( \int_r^\infty \left(1 - \frac{r}{\rho}\right)^{(s-1)\frac{n}{n-s}} \frac{\rho^{-1}}{(1 + \rho^2)^{n/2}} \diff \rho \right)^\frac{n-s}{n}.  
\end{align*} 
Now
\begin{align*}
& \quad \int_0^r \left(1 - \frac{\rho}{r}\right)^{(s-1)\frac{n}{n-s}} \frac{\rho^{n-1}}{(1+\rho^2)^{n/2}} \diff \rho \leq C + \int_1^r  \left(1 - \frac{\rho}{r}\right)^{(s-1)\frac{n}{n-s}} \rho^{-1} \diff \rho \\
&= C +  \int_{1/r}^1 (1-\tau)^{(s-1)\frac{n}{n-s}} \tau^{-1} \diff \tau \leq C (1 + \ln (r)),
\end{align*} 
where we changed variables $\tau = \frac{\rho}{r}$ and noticed that $(s-1)\frac{n}{n-s} > -1$ because $n \geq 2$. Similarly, 
\begin{align*}
&\quad \int_r^\infty \left(1 - \frac{r}{\rho}\right)^{(s-1)\frac{n}{n-s}} \frac{\rho^{-1}}{(1 + \rho^2)^{n/2}} \diff \rho \leq C \int_r^\infty \left(1 - \frac{r}{\rho}\right)^{(s-1)\frac{n}{n-s}} \rho^{-n-1} \diff \rho \\
&= C r^{-n} \int_0^1 (1 - \tau)^{(s-1)\frac{n}{n-s}} \tau^{n-1} \diff \tau \leq C r^{-n}
\end{align*}
by changing variables $\tau = \frac{r}{\rho}$. Altogether, this shows that the first summand of $Tf(x)$ is bounded by $C(1 + \ln (|x|))$. 

The second summand of $Tf(x)$ in \eqref{Tf on Rn} can be treated by the same argument, after simply estimating $(1 + |y|^2)^{-\alpha} \leq 1$ and replacing $s$ by $s + \alpha$.  
\end{proof}

\begin{lemma}
\label{lemma mean value}
For $n \geq 2$, let $0 < \alpha < n$ and define 
\begin{equation}
\label{g definition}
g_\alpha(R) := \frac{1}{|\sph|} \int_\sph |e_1 - R \omega|^{-\alpha} \diff \sigma(\omega). 
\end{equation} 
Then for $R \in [0,1)$, the function $g_\alpha(R)$ satisfies
\[ 
g_\alpha(R) \lesssim 
\begin{cases} 
1 & \text{ if } \alpha \in (0, n-1), \\
1 + \ln \left(\frac{1}{1-R}\right) & \text{ if } \alpha = n-1, \\
(1 - R)^{-\alpha + n -1} & \text{ if } \alpha \in (n-1, n).
\end{cases}
\]
 Moreover, on $[0, 1)$, $g_\alpha(R)$ is 
\begin{itemize}
\item increasing if $\alpha \in (n-2, n)$, 
\item constant if $\alpha = n-2$ and
\item decreasing if $\alpha \in (0, n-2)$.
\end{itemize}
In particular, if $\alpha \in (0, n-2]$, then $\sup_{R \in [0,1]} g_\alpha(R) = g_\alpha(0) = 1$. 
\end{lemma}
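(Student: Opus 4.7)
The plan is to handle the monotonicity claim and the upper size bounds separately: by a potential-theoretic argument for the former, and by a direct integral estimate for the latter.

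For the monotonicity, I would observe that for any $R \in [0,1)$ the function $\Phi(x) := |e_1 - x|^{-\alpha}$ is smooth on the closed ball $\overline{B_R} \subset \R^n$, since $e_1 \notin \overline{B_R}$, and that by construction $g_\alpha(R)$ is exactly the spherical mean of $\Phi$ over $\partial B_R$. The standard identity $\Delta_y |y|^{-\alpha} = \alpha(\alpha - n + 2)\,|y|^{-\alpha - 2}$ on $\R^n \setminus \{0\}$ translated to $\Phi$ gives
\[
\Delta \Phi(x) = \alpha(\alpha - n + 2)\,|e_1 - x|^{-\alpha - 2},
\]
whose sign is positive, zero, or negative according as $\alpha > n-2$, $\alpha = n-2$, or $\alpha < n-2$. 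The classical mean-value characterization of sub-, super-, and harmonic functions then yields that $g_\alpha$ is nondecreasing, constant, or nonincreasing, respectively, on $[0,1)$. Since $g_\alpha(0) = \Phi(0) = 1$, this in particular gives $\sup_{R \in [0,1)} g_\alpha(R) = 1$ whenever $\alpha \in (0, n-2]$.

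For the size bounds I would parametrize $\omega = \cos\theta\,e_1 + \sin\theta\,\omega'$ with $\theta \in [0,\pi]$ and $\omega' \in \mathbb{S}^{n-2}$, so that $\diff\sigma(\omega) = \sin^{n-2}\theta\,\diff\theta\,\diff\sigma_{n-2}(\omega')$ and
\[
g_\alpha(R) = \frac{|\mathbb{S}^{n-2}|}{|\sph|}\int_0^\pi \bigl(1 - 2R\cos\theta + R^2\bigr)^{-\alpha/2}\sin^{n-2}\theta\,\diff\theta.
\]
The identity $1 - 2R\cos\theta + R^2 = (1-R)^2 + 2R(1-\cos\theta)$ combined with the elementary two-sided bound $2\theta^2/\pi^2 \leq 1 - \cos\theta \leq \theta^2/2$ on $[0,\pi]$ shows that for $R \in [\tfrac{1}{2}, 1)$ the integrand is pointwise comparable to $\bigl((1-R)^2 + \theta^2\bigr)^{-\alpha/2}\,\theta^{n-2}$; for $R \in [0,\tfrac{1}{2}]$ the full integral is trivially bounded. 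Splitting the $\theta$-integral at $\theta = 1 - R$, the inner piece contributes $\lesssim (1-R)^{-\alpha}\int_0^{1-R}\theta^{n-2}\,\diff\theta = (1-R)^{n-1-\alpha}$, while the outer piece behaves like $\int_{1-R}^\pi \theta^{n-2-\alpha}\,\diff\theta$, which is $O(1)$ for $\alpha < n-1$, of order $\ln(1/(1-R))$ for $\alpha = n-1$, and of order $(1-R)^{n-1-\alpha}$ for $\alpha \in (n-1, n)$. Combining these pieces yields the three claimed size estimates.

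The argument presents no serious obstacle; the only mild technical point is to invoke the monotonicity of spherical averages on a closed ball $\overline{B_R}$ with $R < 1$ strictly, which is needed to ensure that $\Phi$ is smooth there so that the mean-value inequality applies (the limit $R \to 1^-$ is then free). The harmonic case $\alpha = n-2$ of course requires $n \geq 3$ to be compatible with $\alpha > 0$, in agreement with the hypothesis; for $n = 2$ that regime is simply absent.
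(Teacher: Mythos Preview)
Your proof is correct. The monotonicity argument via the sign of $\Delta |x|^{-\alpha}$ and the mean-value inequalities for sub/superharmonic functions is exactly the paper's approach.

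For the size bounds, however, you take a different route. The paper argues geometrically: near $R=1$ it replaces the spherical integral by an integral over the tangent hyperplane $\{R\}\times\R^{n-1}$ to $\sph$ at $e_1$, writes $d=1-R$, rescales by $d$, and reads off the three regimes from the resulting integral $d^{n-1-\alpha}\int_{\{|x|\leq 1/d\}}|e_n-x|^{-\alpha}\diff x$ on $\R^{n-1}$. You instead parametrize $\sph$ by the polar angle $\theta$ from $e_1$, use $|e_1-R\omega|^2=(1-R)^2+2R(1-\cos\theta)\asymp (1-R)^2+\theta^2$, and split the one-dimensional integral at $\theta=1-R$. Both arguments are short and yield the same bounds; yours is arguably more self-contained (no picture needed, and the comparison $1-\cos\theta\asymp\theta^2$ is elementary), while the paper's geometric viewpoint makes the appearance of the exponent $n-1$ transparent as the dimension of the tangent hyperplane. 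One small remark: your comparability claim for the integrand is only an upper bound near $\theta=\pi$ (since $\sin\theta\to 0$ there but $\theta^{n-2}$ does not), but since the integrand is uniformly bounded away from $\theta=0$ this does not affect the argument.
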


\begin{proof}
Let us first prove the claimed bounds on $g_\alpha$. Clearly, $g_\alpha(R)$ is bounded away from $R=1$ for every $\alpha \in (0,n)$. Considering the following picture,

\begin{figure}[H]

\begin{tikzpicture}[scale=2]
\draw (0,0) circle (1);
\draw (0,0) circle (0.65);
\draw (0.65,-1.5)--(0.65,1.5);
\draw (0,0) node {$\times$};
\draw (0,0) node[left] {$0$};
\draw (0.75, 1.5) node[right] {$\{R\}\times \R^{n-1}$};
%\draw (0.75,0.25)-- (1,0);
\draw[dashed] (0.38,0.54) -- (1,0);
\draw (0.65,0.35) node[right] {$x$};
\draw (1,0) node[right] {$e_1$};
\draw (1,0) node {$\times$};
\draw (0.65,0.3) node {$\times$};
\draw (0.38,0.52) node {$\times$};
\end{tikzpicture}
\end{figure}

Then identifying $\{R \} \times \R^{n-1}$ with $\R^{n-1} \times \{0\}$ and writing $d = 1-R$, we clearly have
\begin{align*}
g(R) &\leq \int_{\{ x \in \R^{n-1} \, : |x| \leq 1 \}} |de_n - x|^{-\alpha} \diff x + C \\
&= d^{n-1-\alpha} \int_{\{ x \in \R^{n-1} \, : |x| \leq 1/d \}} |e_n - x|^{-\alpha} \diff x + C \\
&\leq d^{n-1-\alpha}\int_{\{ x \in \R^{n-1} \, : 1 \leq |x| \leq 1/d \}} |x|^{-\alpha} \diff x + C
\end{align*}
As $d \to 0$, up to a constant, the last integral is bounded by 1 if $\alpha < n-1$, by $\ln (1/d)$ if $\alpha = n-1$ and by $d^{\alpha - n +1}$ if $\alpha > n-1$. This yields the claimed bound in each case.

To prove the claimed monotonicity behavior, we observe that the function $g_\alpha(R)$ is the mean value of the function $f(y) := |e_1 - y|^{-\alpha}$ over a sphere around the origin of radius $R$. Now for $r = |x| \neq 0$, 
\[ \Delta (|x|^{-\alpha}) = \left( \frac{\diff^2}{\diff r^2} + \frac{n-1}{r} \frac{\diff}{\diff r}\right) r^{-\alpha} = \alpha ( \alpha + 2 - n) r^{-\alpha - 2}. \]
Hence, in the open ball of radius 1, $f$ is 
\begin{itemize}
\item subharmonic if $\alpha \in (n-2, n)$, 
\item harmonic if $\alpha = n-2$ and
\item superharmonic if $\alpha \in (0, n-2)$.
\end{itemize}
Thus the monotonicity of $g_\alpha$ follows from the mean-value theorem for harmonic functions, respectively the mean-value inequalities for sub- and superharmonic functions.
\end{proof}

\begin{remark}
\label{remark oneils ineq}
The calculations in this section help to further elucidate the discussion following the statement of Lemma \ref{lemma adams} about the new type of error term arising in our setting. We claim that the estimate obtained from of the O'Neil's-type inequality used by Adams in \cite{Adams} does not extend to functions which are radial, but not necessarily decreasing. Indeed, said estimate \cite[bottom of p.390]{Adams} would read, in our notation and for $f \in L^{\frac{n}{s}}_\text{rad}(B_1)$, say, 
\begin{equation}
\label{oneil adams}
 Tf(r) \leq |\sph| \left( \frac{n}{s} r^{-n+s} \int_0^r f(\rho) \rho^{n-1} \diff \rho  + \int_r^1 f(\rho) \rho^{s-1} \diff \rho \right). 
\end{equation}
On the other hand, even when we drop the terms $\frac{1+r^2}{1+\rho^2}$ due to the conformal factors $J_\mathcal S$ (i.e. when we work with the simpler convolution kernel $|x-y|^{-n+s}$ in a setting which does not come from $\Sph$), the calculations in the proof of Lemma \ref{lemma Tf leq log r} yield, for every $f \in L^\frac{n}{s}_\text{rad}(B_1)$, the identity
\begin{equation*}
\label{oneil our version}
Tf(r) = |\sph| \left( r^{-n+s} \int_0^r g_{n-s}\left(\frac{\rho}{r}\right) f(\rho) \rho^{n-1} \diff \rho  + \int_r^1  g_{n-s}\left(\frac{r}{\rho}\right) f(\rho) \rho^{s-1} \diff \rho  \right) . 
\end{equation*}
In the case when $s < 2$, Lemma \ref{lemma mean value} states that $g_{n-s}(R) > g_{n-s}(0)$ for every $R \in (0,1)$. So if $f \equiv 0$ on $B_{r_0}$, for some $r_0 \in (0,1)$, it is plain that \eqref{oneil adams} is violated for every $r \leq r_0$. 
\end{remark}

\subsection{Proof of Theorem \ref{theorem mt convolution}}
\label{subsection proof of thm mt convolution}

In this section, we give the main argument in the proof of Theorem \ref{theorem mt convolution}. Recall that by what we have shown in Sections \ref{subsection transf to Rn} and \ref{subsection local ineq}, it only remains to prove 
\begin{equation}
\label{goal R^n local}
 \int_{B_1} \exp \left( \frac{n + \beta}{|\sph|}  |Tf|^\frac{n}{n-s} \right) |x|^\beta \diff x \leq C, 
\end{equation}
uniformly for all $f \in L^{\frac{n}{s}}_\text{rad}(\R^n)$ with $\|f\|_{L^{\frac{n}{s}}(\R^n)} \leq 1$. Here, $Tf$ is given by \eqref{Tf on Rn}. 

Our strategy is to reduce the proof of \eqref{goal R^n local} to the one-dimensional bound from Lemma \ref{lemma adams}. To achieve this, we pass to radial logarithmic coordinates. Writing $|x| = e^{-t}$, the inequality in \eqref{goal R^n local} transforms to 
\begin{equation}
\label{goal log var}
\int_0^\infty \exp \left( (n+\beta) \left(  \left| \frac{1}{|\sph|^{\frac{n-s}{n}}} (Tf)(e^{-t})\right| ^\frac{n}{n-s}  -  t \right) \right) \diff t \leq C. 
\end{equation}   
Now, define 
\[ \phi(w) := |\sph|^{\frac{s}{n}} f(e^{-w}) e^{-s w}, \]
so that 
\[ \int_\R |\phi(w)|^{\frac{n}{s}} \diff w = \int_{\R^n} |f(x)|^\frac{n}{s} \diff x \leq 1. \]

Similarly to the calculations in Lemma \ref{lemma Tf leq log r}, writing $|x|=e^{-t}$ and $|y|= e^{-w}$, from \eqref{Tf on Rn} we obtain 
\begin{equation}
\label{Tf log var}
\frac{1}{|\sph|^{\frac{n-s}{n}}}(Tf)(e^{-t}) = u(t) + v(t)
\end{equation}
with 
\begin{align*}
u(t) &:= \int_{-\infty}^t g_{n-s}(e^{w-t}) \left( \frac{1+e^{-2t}}{1 + e^{-2w}} \right)^{\frac{n-s}{2}} \phi(w) \diff w \\
&\quad + \int_t^\infty g_{n-s}(e^{t-w}) e^{(n-s)(t-w)} \left( \frac{1+e^{-2t}}{1 + e^{-2w}} \right)^{\frac{n-s}{2}} \phi(w) \diff w 
\end{align*} 
and 
\begin{align*} v(t) &:= C \int_{-\infty}^t g_{n-s-\alpha}(e^{w-t})  \frac{(1+e^{-2t})^\frac{n-s-\alpha}{2}}{(1 + e^{-2w})^\frac{n-s+\alpha}{2}}  \phi(w) \diff w \\
& \quad + C  \int_t^\infty g_{n-s-\alpha}(e^{t-w}) e^{(n-s-\alpha)(t-w)} e^{-\alpha w}  \frac{(1+e^{-2t})^\frac{n-s-\alpha}{2}}{(1 + e^{-2w})^\frac{n-s+\alpha}{2}}  \phi(w) \diff w. 
\end{align*}
We claim that for every $t \geq 0$
\begin{equation}
\label{conv form proof} 
\frac{1}{|\sph|^\frac{n-s}{n}} (Tf)(e^{-t}) = \int_\R a(w,t) \phi(w) \diff w,
\end{equation}
where $a(w,t)$ satisfies the assumptions of Lemma \ref{lemma adams}. Now an application of that lemma (with $\tilde{\alpha} = n + \beta$ and $p = \frac{n}{s}$) concludes the proof of \eqref{goal log var}, and hence of Theorem \ref{theorem mt convolution}.

It thus remains to prove \eqref{conv form proof}. Let us give the bounds for $u(t)$ in detail. For clarity we first treat the simplest case where $s > 1$, so that $g_{n-s} \leq C$ on all of $[0,1]$ by Lemma \ref{lemma mean value}. For $w \in (-\infty, 0]$, we then have 
\[g_{n-s}(e^{w-t}) \left( \frac{1+e^{-2t}}{1 + e^{-2w}} \right)^{\frac{n-s}{2}} \leq C e^{(n-s)w}. \]
For $w \in (0, t]$, we simply estimate $\frac{1+e^{-2t}}{1 + e^{-2w}} \leq 1$. Moreover, for $R = e^{w-t}$ we estimate $g_{n,s}(R) = g_{n,s}(0) + \int_0^R g_{n,s}'(r) \diff r \leq 1 + C R$ if $R$ is near zero and $g(R) \leq C \leq CR$ if $R$ is away from zero. Altogether, this yields  
\[ g_{n-s}(e^{w-t}) \left( \frac{1+e^{-2t}}{1 + e^{-2w}} \right)^{\frac{n-s}{2}} \leq g_{n-s}(0) + C e^{w-t} = 1 + C e^{w-t}. \]
Finally, for $w \in (t, \infty)$, we estimate $g_{n-s}(e^{t-w}) \leq C$ and 
\[  e^{(n-s)(t-w)} \left( \frac{1+e^{-2t}}{1 + e^{-2w}} \right)^{\frac{n-s}{2}} = \left( \frac{e^{2t} +1 }{e^{2w} + 1} \right)^{\frac{n-s}{2}} \leq \left(e^{2(t-w)} + e^{-2w} \right)^{\frac{n-s}{2}} \leq Ce^{(n-s)(t-w)}. \]
All these bounds clearly satisfy the conditions \eqref{g, h assumption adams} of Lemma \ref{lemma adams}. 

If $s \leq 1$, we need to be slightly more careful due to the unboundedness of $g_{n-s}$ near 1. We give the argument for $s < 1$, the adaptation to $s = 1$ is straightforward.  For $w \in (-\infty, -\ln (2)]$, we have $e^{w-t} \leq \frac 12$, so that $g_{n-s}(e^{w-t})$ is uniformly bounded and we can repeat the above estimate. For $w \in (-\ln (2), 0]$ however, we have the modified bound
\[ g_{n-s}(e^{w-t}) \left( \frac{1+e^{-2t}}{1 + e^{-2w}} \right)^{\frac{n-s}{2}} \leq  (1 - e^{w-t})^{s-1} \]
by Lemma \ref{lemma mean value}. But this error term still satisfies \eqref{g, h assumption adams} because 
\[ \int_{-\ln (2)}^0 (1 - e^{w-t})^{(s-1)\frac{n}{n-s}} \leq C, \]
as a consequence of the inequality $(s-1)\frac{n}{n-s} > -1$. Similar arguments justify the validity \eqref{g, h assumption adams} when $w \in (0, t]$ and when $w \in (t, \infty)$.

The term $v(t)$ can be treated by the same argument with $s$ replaced by $s - \alpha$,  after estimating $(1 + e^{-2w})^{-\alpha} \leq 1$ and $e^{-\alpha w}(1 + e^{-2w})^{-\alpha} \leq 1$ for $w \in \R$. 

This proves \eqref{conv form proof}. Hence the proof of Theorem \ref{theorem mt convolution}, up to the sharpness assertion proved in Section \ref{subsection sharpness} below, is complete. \qed

\begin{remark}
In the above argument, we only used the bounds on $g_{n-s}$ given by Lemma \ref{lemma mean value}, but not its monotonicity behavior. In fact, using the latter, our proof simplifies in the case $s \geq 2$. Indeed, in these cases $g(R)$ is bounded and nonincreasing on $R \in [0,1]$ by Lemma \ref{lemma mean value}. This means that we can replace the estimate $g(e^{s-t}) \leq 1 + C e^{s-t}$ from above by the simpler $g(e^{s-t}) \leq g(0) = 1$. The point is that thanks to the absence of a $t$-dependent term like $e^{s-t}$ in the error function $g(w,t)$ from Lemma \ref{lemma adams}, we can use directly Fontana's version of Adams' lemma instead of our improved one; see also the discussion before Lemma \ref{lemma adams}. It is hence only for small orders of derivative $s < 2$ that the new extension of Adams' lemma given in Lemma \ref{lemma adams} is truly decisive. Compare also Remark \ref{remark oneils ineq}. 
\end{remark}

\subsection{Sharpness of the constant}
\label{subsection sharpness}

We now complete the proof of Theorem \ref{theorem mt convolution} by proving the assertion on sharpness of the constant. By \eqref{expansion K} and after projecting to $\R^n$ as in Section \ref{subsection transf to Rn}, let us thus suppose that $\gamma > 0$ is such that
\begin{equation}
\label{mt frac bound greens}
\sup_{0\leq f \in L^{\frac{n}{s}}_\text{rad}(\R^n), \, \|f\|_{\frac{n}{s}} \leq 1} \int_{\R^n} \exp \left( \gamma Tf(\eta)^\frac{n}{n-s} \right) |x|^\beta \exp(-|x|^\sigma) \diff x \leq C < \infty,
\end{equation}
where $Tf$ satisfies
\begin{align}
\label{Tf estimate sharpness}
 Tf(x) &\geq \int_{\R^n} |x-y|^{-n+s} \left(\frac{1+|x|^2}{1+|y|^2}\right)^\frac{n-s}{2} f(y) \diff y  \\
 & \qquad - C \int_{\R^n} |x-y|^{-n+s + \alpha } \frac{(1+|x|^2)^{\frac{n - s - \alpha}{2}}}{(1+|y|^2)^{\frac{n - s +\alpha}{2}}} f(y) \diff y \nonumber
\end{align} 
for some $\alpha > 0$. 

We shall show that necessarily 
\begin{equation}
\label{gamma bound goal}
\gamma \leq \frac{n+\beta}{|\sph|}
\end{equation}
by following the argument of \cite[proof of Theorem 2]{Adams}, where we replace the Riesz potential of $f$ by $Tf$. Let $0 < r < R$ and let $f \in L^{\frac{n}{s}}(B_R)$ be any function such that $Tf \geq 1$ on $B_r$. Testing \eqref{mt frac bound greens} with $f/\|f\|_{\frac{n}{s}}$ gives
\[ C \geq  \exp \left(\gamma \|f\|_{\frac{n}{s}}^{-\frac{n}{n-s}}\right) \int_{B_r} |x|^\beta =  \exp\left(\gamma \|f\|_{\frac{n}{s}}^{-\frac{n}{n-s}}\right) |\mathbb S^{n-1}| \frac{r^{n +\beta }}{n+\beta}, \]
that is,
\begin{equation}
\label{gamma ineq proof} 
\gamma \leq \left( \ln (C) + (\beta+ n) \ln \left(\frac{R}{r}\right) \right) \|f\|_{\frac{n}{s}}^{\frac{n}{n-s}}
\end{equation}
We now make a suitable choice of $f$. For given $r \in (0, R)$, define
\[ f_{r, R}(x) := \begin{cases} |\mathbb S^{n-1}|^{-1} \left(\ln \left(\frac{R}{r}\right)\right)^{-1} |x|^{-s} &\text{ for  } x \in B_R \setminus B_r, \\
0 &\text{ otherwise }. \end{cases} \]
Let $\eps > 0$ be given. In view of the expression of $Tf$ in \eqref{Tf estimate sharpness}, by choosing $R = R(\eps) >0$ small enough we may assume that 
\begin{align*}
Tf_{r,R}(x) &\geq (1- \eps) \int_{B_R} |x-y|^{-n+s} f_{r,R}(y) \diff y  - C \int_{B_R}  |x-y|^{-n+s + \alpha} f_{r,R}(y) \diff y \\
& \geq (1- 2\eps) |\mathbb S^{n-1}|^{-1} \left(\ln \left(\frac{R}{r}\right)\right)^{-1} \int_{B_R \setminus B_r} |x-y|^{-n+s} |y|^{-s} \diff y
\end{align*} 
At fixed $\eps > 0$ and $R = R(\eps) > 0$, it follows from the argument in \cite[p.392]{Adams} (after rescaling by $R$) that there is $r_0(\eps, R) > 0$ such that for every $r < r_0(\eps, R)$, one has 
\[  \int_{B_R \setminus B_r} |x-y|^{-n+s} |y|^{-s} \diff y \geq (1 - \eps) |\sph| \ln\left( \frac{R}{r} \right). \]
Hence,
\[Tf_{r,R}(x) \geq (1- 3\eps) \qquad \text{ for all } x\in  B_r. \]
We conclude that, for $R = R(\eps)$ and every $r < r_0(\eps, R)$, the function $f:= (1-3\eps)^{-1} f_{r, R}$ is an admissible function satisfying $Tf \geq 1$ on $B_r$, and hence \eqref{gamma ineq proof}. Evaluating
\[ \int_{B_R} f_r^{\frac{n}{s}} =  |\mathbb S^{n-1}|^{-\frac{n-s}{s}} (\ln (R/r))^{-\frac{n-s}{s}} , \]
and letting $r \to 0$, inequality \eqref{gamma ineq proof} becomes
\[ \gamma \leq (1- 3 \eps)^{-\frac{n}{n-s}}  \frac{n+\beta}{|\sph|}. \]
Since $\eps >0$ was arbitrary, this completes the proof of \eqref{gamma bound goal}. \qed

\subsection{Proof of Theorem \ref{thm mt local}}

The proof of Theorem \ref{thm mt local} follows the same strategy as that of Theorem \ref{theorem mt convolution}, Theorem \ref{corollary mt differential bis} and Theorem \ref{corollary mt differential ter}. Since moreover it is simpler due to the presence of fewer error terms, we shall be brief. By \cite[Propositions 7 and 8]{Martinazzi2015}, we have the estimate
\[ |u(x)| \leq K_{n,s} \int_{B_R} |x-y|^{-n+s} f(y) \diff y =: K_{n,s} (I_{n-s} \ast f)(x), \] 
where $f := (-\Delta)^{s/2} u|_{B_R}$. (Notice that unless $s$ is an even integer, $(-\Delta)^{s/2} u$ is in general not supported in $B_R$.) Now we are in the situation of Section \ref{subsection proof of thm mt convolution}, only that $Tf$ is replaced by the simpler $I_{n-s} \ast f$. Setting 
\[ \phi(s) = |\sph|^{\frac{s}{n}} f(Re^{-w}) (Re^{-w})^{s}, \]
and changing variables $|x| = R e^{-t}$, $|y| = R e^{-w}$, as above we find
\[ \frac{1}{|\sph|^{\frac{n-s}{n}}}(I_{n-s} \ast f)(Re^{-t}) =  \int_0^t g_{n-s}(e^{w-t}) \phi(w) \diff w + \int_t^\infty g_{n-s}(e^{t-w}) e^{(n-s)(t-w)} \phi(w) \diff w, \]
which satisfies the assumptions of Lemma \ref{lemma adams} by the same estimates as in the proof of Theorem \ref{theorem mt convolution}. Hence
\begin{align*}
&\quad  \int_{B_R} \exp \left( \frac{n+\beta}{|\sph|} K_{n,s}^{-\frac{n}{n-s}} |u|^\frac{n}{n-s} \right) |x|^\beta \diff x \leq \int_{B_R} \exp \left( \frac{n + \beta}{|\sph|}  |I_{n-s} \ast f(x)|^\frac{n}{n-s} \right) |x|^\beta \diff x \\
 &= R^{n+ \beta} |\sph| \int_0^\infty \exp \left( (n+\beta) \left( \left|  \frac{1}{|\sph|^{\frac{n-s}{n}}}(I_{n-s} \ast f)(Re^{-t}) \right|^\frac{n}{n-s} - t \right) \right) \diff t \leq C(R). 
\end{align*}
This completes the proof of Theorem \ref{thm mt local}.

\begin{remark}
\label{remark non-validity on R^n}
It might be tempting to think that analogously to Theorem \ref{theorem mt differential intro}, a 'global' version of inequality \eqref{ineq mt local} might hold on all of $\R^n$ in the presence of an exponentially decaying weight at infinity, i.e. that
\begin{equation}
\label{integral unbounded} 
\int_{\R^n} \exp \left( \gamma |u|^\frac{n}{n-s} \right) |x|^\beta \exp(-|x|^\sigma) \diff x \leq C
\end{equation}
uniformly in $u$ radial with $\|(-\Delta)^{s/2} u\|_{L^{n/s}(\R^n)} \leq 1$. However, this is wrong for any choice of $\gamma, \sigma > 0$ and $\beta \in \R$. A counterexample is given by the family $u_R(x) := (-\Delta)^{-s/2} f_R (x) = K_{n,s} \int_{\R^n} |x-y|^{-n+s} f_R(y) \diff y$, for the $L^\frac{n}{s}(\R^n)$-normalized functions $f_R(y) = (|\sph| \ln( R))^{-\frac{s}{n}} 1_{B_R \setminus B_1}(y)|y|^{-s}$. Indeed, since $|x-y| \leq 2|y|$ whenever $|x| \leq 1 \leq |y|$, we have $u_R(x) \gtrsim (\ln (R))^{\frac{n-s}{n}}$ for every $x \in B_1$. Thus the left side of \eqref{integral unbounded} with $u = u_R$ is unbounded as $R \to \infty$.  
\end{remark}


\begin{thebibliography}{1}

\vspace{0.15cm}

\bibitem{AG} S. Alinhac, P. Gérard, \textit{Pseudo-differential operators and the Nash-Moser theorem}. Graduate Studies in Mathematics \textbf{82} (2007), viii+168.

%\bibitem{AbJaSa} N. Abatangelo, S. Jarohs, A. Saldaña, \textit{Green function and Martin kernel for higher-order fractional Laplacians in balls.} Nonlinear Analysis {\bf 175} (2018), 173--190.

\bibitem{Adams} D. R. Adams, \textit{A sharp inequality of J. Moser for higher order derivatives}. Ann. of Math. \textbf{128} (1988), 385--398

%\bibitem{Alvino} A. Alvino, \textit{Sulla diseguaglianza di Sobolev in spazi di Lorentz.}
%Boll. Un. Mat. Ital. A (5) \textbf{14} (1977), no. 1, 148--156.

\bibitem{AlFeTr} A. Alvino, V. Ferone, G. Trombetti, \textit{Moser-type inequalities in Lorentz spaces}. 
Potential Anal. \textbf{5} (1996), no. 3, 273--299. 

\bibitem{Bandle} C. Bandle, \textit{Isoperimetric inequalities and applications}, Monographs and Studies in Mathematics, \textbf{7}, Pitman (Advanced Publishing Program), Boston, Mass.-London, 1980.

\bibitem{Be} W. Beckner, \textit{Sharp Sobolev Inequalities on the Sphere and the Moser--Trudinger Inequality} Ann. of Math. \textbf{138} (1993),  no. 1, 213--242.

\bibitem{CaGiSp} L. Caffarelli, B. Gidas, J. Spruck, \textit{Asymptotic symmetry and local behavior of semilinear elliptic equations with critical Sobolev growth}. Comm. Pure Appl. Math. \textbf{42} (1989), no. 3, 271--297. 

\bibitem{CaMa} A. Carlotto, A. Malchiodi, \textit{Weighted barycentric sets and singular Liouville equations on compact surfaces}. J. Funct. Anal. \textbf{262} (2012), no. 2, 409--450.

\bibitem{CW} Chou, K. S. and Wan, Tom Yau-Heng, \textit{Asymptotic radial symmetry for solutions of {$\Delta u+e^u=0$} in a punctured disc},Pacific J. Math., \textbf{163}, (1994), no.2,269--276.

\bibitem{Chang} S-Y. A. Chang, \textit{Non-linear elliptic equations in conformal geometry}. Zurich Lectures in Advanced Mathematics (2004), viii+92.

\bibitem{ChCh} S.-Y. A. Chang, W. Chen, \textit{A note on a class of higher order conformally covariant equations}. Discrete Contin. Dyn. Syst. \textbf{63} (2001) 275--281.

%\bibitem{ChYa} S.-Y. A. Chang, P. C. Yang, \textit{On uniqueness of solutions of nth order differential equations in conformal geometry}. Math. Res. Lett. \textbf{4} (1997), no. 1, 91-–102. 
%
%\bibitem{ChYa2} S.-Y. A. Chang, P. C. Yang, \textit{Extremal metrics of zeta function determinants on $4$-manifolds}. Ann. of Math. (2) \textbf{142} (1995), no. 1, 171--212.
%
%\bibitem{ChKi} S. Chanillo, M. Kiessling, \textit{Rotational symmetry of solutions of some nonlinear problems in statistical mechanics and in geometry}. Comm. Math. Phys. \textbf{160} (1994), no. 2, 217--238.

\bibitem{ChLi} W. Chen, C. Li, \textit{Classification of solutions of some nonlinear elliptic equations}. Duke Math. J. \textbf{63} (1991), no. 3, 615--622.

\bibitem{dePiDo}
M. del Pino, J. Dolbeault, \textit{The Euclidean Onofri inequality in higher dimensions}. 
Int. Math. Res. Not. IMRN 2013, no. 15, 3600--3611.

\bibitem{DoEsTa} J. Dolbeault, M. J. Esteban, G. Tarantello, \textit{The role of Onofri type inequalities in the symmetry properties of extremals for Caffarelli-Kohn-Nirenberg inequalities, in two space dimensions}. Ann. Sc. Norm. Super. Pisa Cl. Sci. (5) \textbf{7} (2008), no. 2, 313--341. 

\bibitem{FaMa} H. Fang, B. Ma, \textit{Constant Q-curvature metrics on conic 4-manifolds}. Advances in Calculus of Variations, vol. , no. , 2020

\bibitem{Fontana} L. Fontana, \textit{Sharp borderline Sobolev inequalities on compact Riemannian manifolds}. 
Comment. Math. Helv. \textbf{68} (1993), no. 3, 415--454. 

\bibitem{FoMo} L. Fontana, C. Morpurgo, \textit{Adams inequalities on measure spaces}. 
Adv. Math. \textbf{226} (2011), no. 6, 5066--5119.

\bibitem{FrKo} R. L. Frank, T. König, \textit{Classification of positive singular solutions to a nonlinear biharmonic equation with critical exponent}. Anal. PDE \textbf{12} (2019), no. 4, 1101–1113.

\bibitem{FuKiMi} T. Futamura, K. Kishi, and Y. Mizuta, \textit{A generalization of Bôcher's theorem for polyharmonic functions}, Hiroshima Math. J.
\textbf{31} (2001), no. 1, 59--70.

%
%\bibitem{GaGr} F. Gazzola, H. C. Grunau, \textit{Radial entire solutions for supercritical biharmonic equations}. Math. Ann. \textbf{334} (2006), no. 4, 905--936.

\bibitem{Gonzalez} M. del Mar González, \textit{Recent progress on the fractional Laplacian in conformal geometry}. Recent developments in nonlocal theory, 236--273, De Gruyter, Berlin, 2018.

\bibitem{Gorin} E. A. Gorin, \textit{Asymptotic properties of polynomials and algrebraic functions of several variables}.  Russ. Math. Surv. \textbf{16} (1961), 93--119.  

\bibitem{GJMS} C.R. Graham, R. Jenne, L. Mason, and G. Sparling, \textit{Conformally invariant powers of the Laplacian, I:existence}. J. Lond. Math. Soc. \textbf{46} (1992), 557–565.
%
%\bibitem{GrGrPa} M. Grossi, C. Grumiau, F. Pacella, \textit{Lane-Emden problems: asymptotic behavior of low energy nodal solutions}. Ann. Inst. H. Poincaré Anal. Non Linéaire \textbf{30} (2013), no. 1, 121--140.

%
%\bibitem{HuYe}
%X. Huang, D. Ye, \textit{Conformal metrics in R2m with constant Q-curvature and arbitrary volume}. 
%Calc. Var. Partial Differential Equations \textbf{54} (2015), no. 4, 3373--3384.

\bibitem{Hyder-structure} A. Hyder, \textit{Structure of conformal metrics on Rn with constant Q-curvature}, 
Differential Integral Equations \textbf{32} (2019), no. 7-8, 423--454.

\bibitem{Hyder-existence} A. Hyder, \textit{Existence of entire solutions to a fractional Liouville equation in $R^n$}, Atti Accad. Naz. Lincei Rend. Lincei Mat. Appl. \textbf{27} (2016), no. 1, 1--14.

\bibitem{Hyder2017} A. Hyder, \textit{Conformally Euclidean metrics on $R^n$ with arbitrary total $Q$-curvature}. Anal. PDE \textbf{10} (2017), no. 3, 635--652.

\bibitem{HyLiWe} A. Hyder, C.-S. Lin, J. Wei, \textit{The $SU(3)$ Toda system with multiple singular sources}. Pacific Journal of Mathematics \textbf{305} (2020), 645--666


\bibitem{HyMaMa}
A. Hyder, G. Mancini, L. Martinazzi, \textit{Local and nonlocal singular Liouville equations in Euclidean spaces}, IMRN (2021), no. 15, 11393--11425 


\bibitem{HyMa} A. Hyder, L. Martinazzi, \textit{Conformal metrics on $R^{2m}$ with constant Q-curvature, prescribed volume and asymptotic behavior},  Discrete Contin. Dyn. Syst. \textbf{35} (2015), no. 1, 283--299.


\bibitem{JiMaMaXi} T. Jin, A. Maalaoui, L. Martinazzi, J. Xiong, \textit{Existence and asymptotics for solutions of a non-local Q-curvature equation in dimension three}, 
Calc. Var. Partial Differential Equations \textbf{52} (2015), no. 3-4, 469--488.

\bibitem{LaLu} N. Lam, G. Lu, \textit{Sharp singular Adams inequalities in high order Sobolev spaces}. 
Methods Appl. Anal. \textbf{19} (2012), no. 3, 243--266.

%\bibitem{Li} D. Li, \textit{On Kato–Ponce and fractional Leibniz}. Rev. Mat. Iberoam. {\bf 35} (2019), 23--100. 
%


\bibitem{Lin} C.-S. Lin, \textit{A classification of solutions of a conformally invariant fourth order equation in $R^n$}. 
Comment. Math. Helv. \textbf{73} (1998), no. 2, 206--231.

\bibitem{LiMa} F. Da Lio, L. Martinazzi, \textit{The nonlocal Liouville-type equation in $\mathbb R$ and conformal immersions of the disk with boundary singularities}.
Calc. Var. Partial Differ. Equ. \textbf{56} (2017), no. 5, paper no. 152.


\bibitem{Maalaoui} A. Maalaoui, \textit{Prescribing the $Q$-curvature on the sphere with conical singularities}. Discrete Contin. Dyn. Syst. \textbf{36} (2016), no. 11, 6307--6330.

\bibitem{MaRu} A. Malchiodi,D. Ruiz, \textit{New improved Moser-Trudinger inequalities and singular Liouville equations on compact surfaces.} 
Geom. Funct. Anal. \textbf{21} (2011), no. 5, 1196--1217.

\bibitem{Martinazzi} L. Martinazzi, \textit{Classification of solutions to the higher order Liouville's equation on $\R^{2m}$}, Math. Z. \textbf{263} (2009), no. 2, 307--329.

\bibitem{Martinazzi2013} L. Martinazzi, \textit{Conformal metrics on $\Bbb{R}^{2m}$ with constant $Q$-curvature and large volume}.
Ann. Inst. H. Poincaré Anal. Non Linéaire \textbf{30} (2013), no. 6, 969--982.

\bibitem{Martinazzi2015} L. Martinazzi, \textit{Fractional Adams-Moser-Trudinger type inequalities}. Nonlinear Anal. \textbf{127} (2015), 263--278. 

\bibitem{Moser} J. Moser, \textit{A sharp form of an inequality by N. Trudinger}. Indiana Univ. Math. J. \textbf{20} (1970/71), 1077--1092.

\bibitem{Nd} C. B. Ndiaye, Cheikh Birahim, \textit{Constant $Q$-curvature metrics in arbitrary dimension}. J. Funct. Anal. \textbf{251} (2007), no. 1,1--58.


\bibitem{ONeil} R. O'Neil, \textit{Convolution operators and $L(p,q)$ spaces}. Duke Math. J. \textbf{30} (1963), 129--142. 

\bibitem{OMaOl} J. M. do Ó, A. C. Macedo, J. F. de Oliveira, \textit{A sharp Adams-type inequality for weighted Sobolev spaces}. Q. J. Math. \textbf{71} (2020), no. 2, 517--538. 

\bibitem{Pa} S. Paneitz, \textit{A quartic conformally covariant differential operator for arbitrary pseudo-Riemannian manifolds. Symmetry Integrability}. Geom. Methods Appl. 4, Paper 36 (2008), preprint (1983).

\bibitem{PrTa} J. Prajapat, G. Tarantello, \textit{On a class of elliptic problems in $\R^2$: symmetry and uniqueness results}. 
Proc. Roy. Soc. Edinburgh Sect. A  \textbf{131} (2001), no. 4, 967--985.

\bibitem{Schoen} R. M. Schoen, \textit{Variational theory for the total scalar curvature functional for Riemannian metrics and related
topics}. Topics in calculus of variations (Montecatini Terme, 1987), edited by M. Giaquinta, Lecture Notes in
Math. 1365, Springer, 1989, 120--154. 

\bibitem{Tarsi} C. Tarsi, \textit{Adams' inequality and limiting Sobolev embeddings into Zygmund spaces}. 
Potential Anal. \textbf{37} (2012), no. 4, 353--385.

\bibitem{WeXu} J. Wei, X. Xu, \textit{Classification of solutions of higher order conformally invariant equations}.
Math. Ann. \textbf{313} (1999), no. 2, 207--228.

\bibitem{WeYe} J. Wei, D. Ye, \textit{Nonradial solutions for a conformally invariant fourth order equation in $\mathbb {R}^4$}. Calc. Var. \textbf{32} (2008), 373--386.

\end{thebibliography}
\end{document}